\documentclass[11pt]{amsart}
\usepackage[T1]{fontenc}
\usepackage{lmodern}
\usepackage{amsmath,amssymb,amsthm,microtype,indentfirst}
\usepackage{tikz,placeins}
\usetikzlibrary{arrows.meta,calc}
\definecolor{geoblue}{RGB}{31,91,127}
\definecolor{geoamber}{RGB}{190,109,32}
\usepackage[colorlinks=true,linkcolor=blue,citecolor=blue,urlcolor=blue]{hyperref}
\hypersetup{pdftitle={Scalar curvature integrals on geodesic balls},pdfauthor={Qixuan Hu}}
\allowdisplaybreaks[2]
\numberwithin{equation}{section}
\newtheorem{theorem}{Theorem}[section]
\newtheorem{proposition}[theorem]{Proposition}
\newtheorem{definition}[theorem]{Definition}
\newtheorem{lemma}[theorem]{Lemma}

\newtheorem*{quotedtheorem}{Theorem}
\theoremstyle{remark}

\DeclareMathOperator{\Ric}{Ric}
\DeclareMathOperator{\Hess}{Hess}
\DeclareMathOperator{\Vol}{Vol}
\DeclareMathOperator{\Lip}{Lip}
\DeclareMathOperator{\diver}{div}
\DeclareMathOperator{\tr}{tr}
\newcommand{\R}{\mathbb R}
\newcommand{\Sph}{\mathbb S}
\newcommand{\dd}{\,\mathrm d}
\newcommand{\ip}[2]{\langle #1,#2\rangle}

\title[Scalar curvature integrals on geodesic balls]
{Scalar curvature integrals\\ on geodesic balls}
\author{Qixuan Hu}
\address{Department of Mathematics and Computer Sciences,
Shantou University, Guangdong, P. R. China}
\email{qxhu@stu.edu.cn}
\thanks{This work was supported by SRIG under Grant NTF25026T}
\subjclass[2020]{Primary 53C21; Secondary 53C20}
\keywords{Scalar curvature, nonnegative Ricci curvature,
nonnegative sectional curvature, pole, asymptotic volume ratio}
\date{\today}

\begin{document}
\begin{abstract}
We study scalar curvature integrals over geodesic balls on complete
noncompact manifolds. Under nonnegative Ricci curvature and rotational
symmetry, we prove the sharp bound 
\[
 \int_{B_p(r)}S\dd\mu_g
 \le 2n(n-1)\omega_n
       \bigl(1-v^{1/(n-1)}\bigr)r^{n-2},
\]
when $p$ is the rotational origin. Here $\omega_n$ is the volume
of the Euclidean unit ball and $v=V_{M^n}$ is the asymptotic volume
ratio. In dimension three, with the center
at a pole, we obtain a sharp bound
\[
\int_{B_p(r)}S\dd\mu_g\le16\pi r,
\]
without any positive volume growth assumption. We also obtain a bound for every asymptotic volume ratio which is sharp for $0\le v\le1/4$.

Under nonnegative sectional curvature and $v>0$, we prove
the explicit bound at arbitrary centers
\[
\int_{B_p(r)}S\dd\mu_g\le12\pi(1-v)v^{-3}r.
\]
If $0\le1-v\le1/100$, we further obtain
\[
\int_{B_p(r)}S\dd\mu_g\le\bigl[8\pi(1-v)+200\pi(1-v)^2\bigr]r.
\]
The leading coefficient $8\pi$ is sharp, and a remainder of
quadratic order in $1-v$ is necessary.
\end{abstract}
\maketitle

\section{Introduction}
\label{sec:introduction}

Our main results concern smooth, connected, complete, noncompact
Riemannian manifolds $(M^n,g)$ without boundary. We write $K$,
$\Ric$, and $S$ for the sectional curvature, Ricci tensor, and
scalar curvature, respectively, with the convention
$S=\tr_g\Ric=2\sum_{i<j}K(e_i\wedge e_j)$ in an orthonormal frame.
Let $\omega_n$ denote the volume of the Euclidean unit ball in
$\R^n$. Under $\Ric\ge0$, Bishop--Gromov comparison gives the
asymptotic volume ratio
\begin{equation}\label{eq:avr-def}
 V_{M^n}:=\lim_{r\to\infty}
          \frac{\Vol(B_p(r))}{\omega_n r^n}\in[0,1],
\end{equation}
independently of $p$. We use the notation
\begin{equation}\label{eq:notation}
 I_p(r)=\int_{B_p(r)}S\dd\mu_g,
 \qquad v=V_{M^n},\qquad \rho(x)=d(p,x),
\end{equation}
with the dimension specified in each statement. All balls are
intrinsic geodesic balls. A point $p$ is a pole if
$\exp_p:T_pM\to M$ is a diffeomorphism.

The quantity $r^{2-n}I_p(r)$ is invariant under simultaneous
rescaling of the metric and radius. We are interested in uniform
bounds at a fixed radius, or equivalently at every radius under
scale-invariant hypotheses. Naber \cite[Conjecture~3.5]{Nab20}
formulated the local version of Yau's scalar-curvature question:
on a complete manifold, does
\[
 \Ric\ge-\Lambda g
 \quad\Longrightarrow\quad
 \int_{B_p(1)}S\dd\mu_g\le C(n,\Lambda)
\]
hold? The noncollapsing version
\cite[Conjecture~2.18]{Nab20} additionally assumes
$\Vol(B_p(1))\ge\nu>0$ and allows the constant to depend on
$\nu$. Here $\nu$ is a lower bound for the volume of a unit ball,
whereas $v$ denotes the asymptotic volume ratio. Under $\Ric\ge0$,
the latter gives $\Vol(B_p(r))\ge v\omega_n r^n$ at every center
and radius. The local question is stronger than the asymptotic
boundedness question
\[
 \limsup_{r\to\infty}r^{2-n}I_p(r)<\infty
\]
arising from \cite[Problem~9]{Yau92} and the classical
Cohn--Vossen inequality for complete noncompact surfaces
\cite{CV35}.

Recent counterexamples show that nonnegative Ricci curvature alone
does not suffice, even in dimension three. Hao and Zhu
\cite[Theorem~1.1 and Proposition~1.3]{HZ26} constructed a complete
metric on $\R^3$ with $\Ric>0$, $V_{M^3}=0$, and
\[
 \limsup_{r\to\infty}\frac{I_p(r)}r=+\infty
 \qquad\text{for every }p\in M.
\]
Independently, Xu \cite[Theorem~1.1]{XuCollapse26} constructed a
complete metric on $\R^3$ with $\Ric>0$ and $V_{M^3}=0$ for which
the full limit $I_p(r)/r$ tends to $+\infty$ at every center.
These examples also yield unbounded unit-ball integrals by rescaling.
Xu also produced smooth metrics $h_j$ on $\Sph^3$ with
$\Ric_{h_j}>0$, diameter less than $1/2$, volume tending to zero,
and total scalar curvature tending to infinity
\cite[Theorem~1.1]{XuCollapse26}. Every unit ball is then the whole
manifold. Thus the local assertion without noncollapsing is false.
These constructions involve volume collapse and do not disprove
the noncollapsing version.

For fixed-radius integrals, several partial results are available
under a lower Ricci bound. An estimate of Wenshuai Jiang and Naber,
announced in \cite[Theorem~2.17]{Nab20}, states that
\[
 \frac{1}{\Vol(B_p(1))}\int_{B_p(1)}|\Ric|^q\dd\mu_g
 \le C(n,\Lambda,\nu,q),\qquad 0<q<1,
\]
provided $\Ric\ge-\Lambda g$ and $\Vol(B_p(1))\ge\nu>0$.
Cucinotta--Mondino \cite[Theorem~5.15]{CM26} give a proof in
their appendix. Without noncollapsing, Smith
\cite[Theorem~1.1]{Smith25} proved
\[
 \int_{B_p(1)}|\Ric|^q\dd\mu_g
 \le C(n,q)\Vol(B_p(1))^{1-2q},\qquad 0<q<\tfrac12,
\]
assuming $\Ric\ge-g$ on $B_p(5)$. These estimates control
sublinear powers of curvature and do not give the endpoint
$L^1$ bound considered here.

The endpoint is known in some more structured settings.
Liu--Sz\'ekelyhidi \cite{LS22} proved that a unit ball in a
polarized K\"ahler manifold of complex dimension $m$, with
$\Ric>-g$ and $\Vol(B_p(1))>\nu>0$, satisfies
$\int_{B_p(1)}S\dd\mu_g\le C(m,\nu)$.
They also obtained small scalar-curvature integrals on half-balls
when the unit ball is sufficiently close to a Euclidean ball in
the Gromov--Hausdorff distance.
For warped products, Xiaojie Wang
\cite[Corollary~4.3.16]{Wang14} proved a unit-ball bound under
a lower Ricci bound on complete three-manifolds of the form
$(\R\times N^2,\mathrm dt^2+e^{2\phi(t)}h)$, with $(N^2,h)$
complete. 

Asymptotic results provide further context but concern a different
scale regime. Under $\Ric\ge0$ in dimension three, Bo Zhu
\cite[Theorem~1.7]{Zhu22} obtained an asymptotic upper bound at a
pole, and Xu \cite[Theorem~1.4]{Xu24} proved
\[
 \lim_{r\to\infty}\frac{I_q(r)}r=8\pi(1-v)\qquad(q\in M)
\]
whenever the manifold admits a pole. Green-function-weighted
analogues were studied in \cite{Xu20,CXZ26}. Further asymptotic
estimates use positive two-sided scalar-curvature bounds and one
end \cite{MW25}, quadratic scalar-curvature decay \cite{Deng26},
or local conformal flatness \cite{Ma26}.

The first part of this paper gives estimates at every radius
under $\Ric\ge0$, beginning with the sharp rotationally
symmetric case.

\begin{theorem}\label{thm:rotational}
Let $(M^n,g)$, $n\ge2$, be rotationally symmetric about $p$,
with $\Ric\ge0$ and metric of the form
$g=\mathrm dt^2+f(t)^2g_{\Sph^{n-1}}$ for $0\le t<\infty$.
Then, for every $r>0$,
\begin{equation}\label{eq:rot-sharp}
 I_p(r)\le 2n(n-1)\omega_n
       \bigl(1-v^{1/(n-1)}\bigr)r^{n-2}.
\end{equation}
For every fixed $n\ge2$, $r>0$, and $v=V_{M^n}\in[0,1]$, this
bound is sharp among metrics with that exact asymptotic volume ratio.
In dimension three, it becomes
\begin{equation}\label{eq:rot-three}
 I_p(r)\le16\pi(1-\sqrt{v})r.
\end{equation}
\end{theorem}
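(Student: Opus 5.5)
The plan is to reduce everything to a one-variable inequality for the warping function $f$. The scalar curvature of $g=\mathrm dt^2+f^2g_{\Sph^{n-1}}$ is
\[
 S=-2(n-1)\frac{f''}{f}+(n-1)(n-2)\frac{1-f'^2}{f^2},
\]
and $\dd\mu_g=f^{n-1}\dd t\,\dd\sigma$ with $|\Sph^{n-1}|=n\omega_n$. The hypothesis $\Ric\ge0$ gives two pointwise inequalities:
\[
 f''\le0,\qquad -\frac{f''}{f}+(n-2)\frac{1-f'^2}{f^2}\ge0.
\]
Together with $f(0)=0$ and $f'(0)=1$, concavity gives three facts. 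First, $f'$ is nonincreasing, so $f'\le1$ and $f\le t$. Second, $f'$ tends to a limit $a\ge0$, since $f>0$ forces $f'\ge0$. Third, comparing volumes with the cone of slope $a$ gives $v=a^{n-1}$, i.e.\ $a=v^{1/(n-1)}$, and $f'\ge a$ everywhere.

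Next I integrate by parts using $(f^{n-2}f')'=f^{n-2}f''+(n-2)f^{n-3}f'^2$ and write $1+f'^2=(1-f')^2+2f'$, with $2(n-2)f^{n-3}f'=2(f^{n-2})'$. This should give the identity
\[
 \frac{I_p(r)}{n(n-1)\omega_n}
 =F(r):=2f(r)^{n-2}\bigl(1-f'(r)\bigr)+(n-2)\int_0^r f^{n-3}(1-f')^2\dd t .
\]
For $n=2$ the identity reads $I_p(r)=4\pi(1-f'(r))\le4\pi(1-a)$, which settles that case. For $n\ge3$ the boundary term at $t=0$ vanishes because $f(0)=0$.

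The key step, and the one I expect to need the most care, is the comparison $F(r)\le 2\bigl(1-f'(r)\bigr)r^{n-2}$. I would set $G(t)=2(1-f'(t))t^{n-2}-F(t)$, so that $G(0)=0$, and differentiate:
\[
 G'=2f''\bigl(f^{n-2}-t^{n-2}\bigr)
 +(n-2)(1-f')\bigl[2t^{n-3}-f^{n-3}(1+f')\bigr].
\]
Both terms are nonnegative. In the first, $f''\le0$ and $f\le t$. In the second, $0\le 1-f'$, $1+f'\le2$ and $f\le t$. Hence $G(r)\ge0$. Since $f'(r)\ge a$, this gives $F(r)\le 2(1-a)r^{n-2}$, which is \eqref{eq:rot-sharp}. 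Setting $n=3$, with $\omega_3=4\pi/3$ and $a=\sqrt v$, gives \eqref{eq:rot-three}.

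For sharpness at fixed $r$ and $a\in[0,1]$, the argument above shows where equality must nearly hold: we need $f\approx t$ on $[0,r]$ and $f'(r)\approx a$. So I take $f(t)=t$ on $[0,r-\delta]$. On $[r-\delta,r]$ I let $f'$ decrease smoothly and monotonically from $1$ to $a$, and for $t\ge r$ I set $f'\equiv a$. When $a=0$, $f$ is then eventually constant and the end is a cylinder.

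It remains to check $\Ric\ge0$ for this family. The conditions are $f''\le0$ and $1-f'^2\ge0$, so both curvature expressions are nonnegative, and the asymptotic volume ratio is exactly $a^{n-1}=v$. As $\delta\to0$ we have $f(r)\to r$ and $\int_0^r f^{n-3}(1-f')^2\dd t\to0$. Hence $F(r)\to 2(1-a)r^{n-2}$, so the constant in \eqref{eq:rot-sharp} cannot be improved.
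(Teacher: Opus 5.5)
Your proof is correct and is essentially the paper's argument. The paper rewrites the integral using $1-f'(t)=\int_0^t(-f'')$ and Fubini, then bounds the resulting bracket by $2r^{n-2}$ using $f''\le0$, $f\le t$ and $f'\le1$; the slack it discards is exactly your $G(r)=\int_0^r G'\dd t$, and your sharpness family is the same thin-transition construction, with your identity $2f(r)^{n-2}(1-f'(r))+(n-2)\int_0^r f^{n-3}(1-f')^2\dd t$ simply making the limit there slightly more transparent.
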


Without rotational symmetry, the pole assumption
gives the following three-dimensional estimate.

\begin{theorem}\label{thm:piecewise}
Let $(M^3,g)$ have $\Ric\ge0$ and a pole $p$.
For $v=V_{M^3}\in[0,1]$ and every $r>0$,
\begin{equation}\label{eq:piecewise}
 I_p(r)\le P(v)r,\qquad
 P(v)=
 \begin{cases}
  16\pi(1-\sqrt v),&0\le v\le\tfrac14,\\[3pt]
  8\pi(1-\sqrt v)(1+2\sqrt v),&\tfrac14<v\le1.
 \end{cases}
\end{equation}
The bound is sharp at each fixed $v\in[0,1/4]$, and at $v=1$.
In particular, $I_p(r)\le16\pi r$ is a sharp bound without positive volume
growth assumption.
\end{theorem}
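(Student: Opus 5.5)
The plan is to slice $B_p(r)$ by the distance spheres $\Sigma_t=\partial B_p(t)$. These are smooth embedded $2$-spheres for all $t>0$ because $p$ is a pole. This reduces $I_p(r)$ to an integral inequality for two scalar functions of $t$, and then to a one-dimensional extremal problem.

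\emph{Step 1: the identity on each sphere.} Let $\nu=\nabla\rho$, and let $A_t$, $H_t$ be the second fundamental form and mean curvature of $\Sigma_t$. The Gauss equation gives
\[
S=2K_{\Sigma_t}+2\Ric(\nu,\nu)+|A_t|^2-H_t^2 .
\]
The Riccati equation along radial geodesics gives
\[
\partial_tH=-|A|^2-\Ric(\nu,\nu),
\]
and the first variation of area gives $\partial_t\dd\sigma_t=H\dd\sigma_t$. Put
\[
a(t)=|\Sigma_t|,\qquad m(t)=\int_{\Sigma_t}H\dd\sigma_t .
\]
Eliminating $\Ric(\nu,\nu)$ and using Gauss--Bonnet ($\int_{\Sigma_t}K_{\Sigma_t}=4\pi$) yields
\[
\int_{\Sigma_t}S\dd\sigma_t=8\pi-2m'(t)+\int_{\Sigma_t}\bigl(H^2-|A|^2\bigr)\dd\sigma_t .
\]
Integrating in $t$ with coarea (smallness of $m$ at the pole, $m(t)\sim8\pi t$, kills the boundary term at $0$) gives
\[
I_p(r)=8\pi r-2m(r)+\int_0^r\!\!\int_{\Sigma_t}2\lambda_1\lambda_2\dd\sigma_t\dd t .
\]
In the rotational case this reads $8\pi r-16\pi ff'+8\pi\int_0^rf'^2$, which is how Theorem~\ref{thm:rotational} specializes.

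\emph{Step 2: inequalities for the pair $(a,m)$.} I would collect the following facts.
\begin{itemize}
\item $a'=m$.
\item $2\lambda_1\lambda_2\le H^2/2$ pointwise.
\item $H\le2/t$ by Laplacian comparison, so $m\le 2a/t$.
\item $m'=\int(H^2-|A|^2)-\int\Ric(\nu,\nu)\le\tfrac12\int H^2$, so $m'$ is controlled by the same quantity that appears in $I_p$.
\item Cauchy--Schwarz gives $\int H^2\ge m^2/a$.
\item Bishop--Gromov gives $a(t)/(4\pi t^2)$ nonincreasing with limit $v$. Together with $m'\le\tfrac12\int H^2\le m/t$, this shows $m(t)/t$ is nonincreasing with limit $8\pi v$.
\end{itemize}
Write $Q(t)=\tfrac12\int_{\Sigma_t}H^2\ge m'$. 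Then
\[
I_p(r)\le 8\pi r-2m(r)+\int_0^rQ\dd t,\qquad m'\le Q\le m/t .
\]
One cheap bound already follows: $\int_0^rQ\le\int_0^r m/t=a(r)/r+\int_0^r a/t^2\le 8\pi r$, and $m(r)\ge 8\pi v r\ge0$. Hence $I_p(r)\le 16\pi r$, which is the $v=0$ endpoint of the theorem.

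\emph{Step 3: the extremal problem, which is the main obstacle.} To obtain $P(v)$, I would normalize $t=r$ and set $\phi(t)=m(t)/(8\pi t)$, which is nonincreasing with $\phi\ge v$. I would also set $\alpha(t)=a/(4\pi t^2)\in[v,1]$, nonincreasing. The task is to maximize
\[
8\pi r-2m(r)+\int_0^rQ
\]
subject to the constraints of Step 2. The coupling between $m'$, $Q$, $m^2/a$ and the volume ratio should allow at most the budget of $Q$ permitted by $a$ to be spent, weighed against the penalty $-2m(r)$.

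I expect two regimes. For small $v$, the maximizer should be the rotational profile with $f'$ dropping as late as possible to $\sqrt v$, which reproduces $16\pi(1-\sqrt v)$ as in Theorem~\ref{thm:rotational}. For larger $v$, the constraint $\alpha\ge v$ binds before the rotational optimum is reached and should produce $8\pi(1-\sqrt v)(1+2\sqrt v)$. I would check that the two expressions agree at $\sqrt v=1/2$, where both equal $8\pi$. If the direct ODE comparison proves too loose, my first fallback is to replace $2\lambda_1\lambda_2\le H^2/2$ by keeping $\int|A|^2$ and using $\int|\mathring A|^2\ge0$ together with the $\Ric(\nu,\nu)$ term in $m'$ more carefully.

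\emph{Step 4: sharpness.} For $v\in[0,1/4]$, sharpness follows from the rotational examples of Theorem~\ref{thm:rotational}, since the rotational origin is a pole and there $P(v)$ equals the rotational bound. At $v=1$, sharpness follows from flat $\R^3$, where both sides vanish.
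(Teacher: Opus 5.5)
Your Steps 1, 2 and 4 are sound. Step 1 is the paper's polar-coordinate inequality in other variables: your $Q=\frac12\int_{\Sigma_t}H^2\dd\sigma_t$ equals $2\int_{\Sph^2}(\partial_t\sqrt J)^2\dd\theta$. Your cheap bound recovers the paper's estimate $I_p(r)\le16\pi r$, and sharpness is handled as in the paper. The gap is Step 3, which carries all of the $v>0$ content. It is not merely unexecuted: the relations you collected in Step 2 cannot yield $P(v)$.

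To see this, take $m(t)=8\pi t$ on $(0,r_0)$ and $m(t)=8\pi vt$ on $(r_0,\infty)$, with a downward jump at $r_0$ (compatible with $m'\le Q$). Set $a(t)=\int_0^tm$ and $Q=m/t$. Then $a/(4\pi t^2)$ is nonincreasing with limit $v$ (it equals $v+(1-v)r_0^2/t^2$ for $t\ge r_0$), and $m/t$ is nonincreasing with limit $8\pi v$. Moreover $m\le2a/t$, $m^2/(2a)\le Q\le m/t$ and $m'\le Q$ all hold. Yet
\[
 8\pi r-2m(r)+\int_0^rQ\dd t
 =8\pi r-16\pi vr+8\pi r_0+8\pi v(r-r_0)
 \longrightarrow16\pi(1-v)r
 \quad(r_0\uparrow r),
\]
which exceeds $P(v)r$ for every $0<v<1$.

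The only lower bound on $m(r)$ in your system is $m(r)\ge8\pi vr$, which is linear in $v$. The $\sqrt v$ in $P(v)$ requires $m(r)\ge8\pi\sqrt v\,\eta(r)$ with $\eta=\sqrt{a/(4\pi)}$, equivalently $\eta'\ge\sqrt v$. In the rotational case this is just $f'\ge\sqrt v$. In general it is invisible direction by direction: radially you only know $\partial_t\sqrt J(t,\theta)\ge\beta(\theta)$ with $\int_{\Sph^2}\beta^2\dd\theta=4\pi v$, and Cauchy--Schwarz then points the wrong way. Your fallback (keeping $|\mathring A|^2$ and the Ricci term) reshuffles the same sphere-by-sphere quantities and does not supply this bound.

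The missing ingredient is global: the sharp Minkowski inequality of Benatti--Fogagnolo--Mazzieri. It states $\int_{\partial\Omega}H\dd\sigma\ge8\pi\sqrt v\,\sqrt{|\partial\Omega|/(4\pi)}$ for bounded, smooth, outward-minimizing domains when $\Ric\ge0$ and $v>0$. The balls $B_p(t)$ are outward minimizing, by Gauss--Green applied to the unit field $\nabla\rho$, whose divergence $H$ is nonnegative. Hence $m(t)\ge8\pi\sqrt v\,\eta(t)$, so $\eta'\ge\sqrt v$ and $\sqrt v\,t\le\eta(t)\le t$.

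With this, no extremal problem is needed. Your own inequalities already give
\[
 \int_0^rQ\dd t\le\int_0^r\frac mt\dd t
 =8\pi\int_0^r\frac{\eta\eta'}t\dd t\le8\pi\eta(r),
\]
so $I_p(r)\le8\pi\bigl[r+(1-2\sqrt v)\eta(r)\bigr]$. Then use $\eta(r)\le r$ if $\sqrt v\le1/2$, and $\eta(r)\ge\sqrt v\,r$ if $\sqrt v>1/2$. This is the paper's argument. The paper obtains $\int_0^rQ\dd t\le8\pi\eta(r)$ differently, from $\int_0^r(\partial_t\sqrt J)^2\dd t\le\sqrt{J(r,\theta)}$ and Cauchy--Schwarz on $\Sph^2$.
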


We do not assert sharpness for $1/4<v<1$.
The dimension restriction in the general pole estimate is essential.
Cheng \cite[Theorem~1.1 and Proposition~5.2]{Ch26} constructed,
for each $n\ge4$, a complete metric $g$ on $\R^n$ with
$\Ric\ge0$, a pole $o$, and $V_{M^n}=0$, such that
\[
 \lim_{L\to\infty}L^{2-n}
       \int_{B_o^g(L)}S_g\dd\mu_g=+\infty.
\]
With $g_L=L^{-2}g$, the pole is preserved and
\[
 \int_{B_o^{g_L}(1)}S_{g_L}\dd\mu_{g_L}
       =L^{2-n}\int_{B_o^g(L)}S_g\dd\mu_g.
\]
Thus no dimensional unit-ball bound holds under $\Ric\ge0$
and a pole for $n\ge4$. These examples have zero asymptotic
volume ratio; they do not rule out bounds depending on a fixed
positive $V_{M^n}$. Rotational symmetry remains covered by
Theorem~\ref{thm:rotational} in every dimension.

The pole assumption cannot be omitted from the uniform bound
$I_p(r)\le16\pi r$ even if we additionally assume $M$ is a convex hypersurface in $\mathbb{R}^4$. In Appendix~\ref{sec:centers}, we construct
smooth complete convex hypersurfaces in $\R^4$ whose unit-ball
integrals approach
\[
 \mathcal E(\theta)=16\pi+8\pi\theta\cos\theta,
 \qquad 0<\theta<\pi/2.
\]
The construction joins two flat copies of a smoothed convex
cap--cylinder domain by an convex band in $\R^4$.
In the limit case, the spherical cap and cylinder
contribute $16\pi$ in total; smoothing their junction adds
$8\pi\theta\cos\theta$, determined by the normal rotation
along the junction circle. The largest value in this family is
$\mathcal E_*=64.3673715295737\ldots>16\pi$.
These hypersurfaces have nonnegative sectional curvature and
zero asymptotic volume ratio. 

For arbitrary centers, nonnegative sectional curvature yields
uniform bounds without a pole assumption. Petrunin
\cite[Theorem~1.1]{Pet09} proved that $K\ge-1$ implies
$\int_{B_p(1)}S\dd\mu_g\le C(n)$ for a dimensional constant.
Under $K\ge0$, rescaling gives $I_p(r)\le C(n)r^{n-2}$.
Nan Li \cite{Li25} gave an alternative proof based on a
quantitative covering argument. In a related direction,
Lebedeva--Petrunin \cite[Corollary~1.2]{LP24} proved weak
continuity of scalar-curvature measures under noncollapsed
Gromov--Hausdorff convergence with a common lower sectional
curvature bound.
Jiang--Li \cite[Theorem~1.2]{JL26} sharpened Petrunin's bound to
\[
 I_p(r)\le C_1(n)(1-V_{M^n})r^{n-2}
 \qquad(p\in M,\ r>0).
\]
Their argument uses quantitative Alexandrov geometry and
dimension reduction when the volume ratio is close to one,
and Petrunin's estimate for the remaining range. The resulting
dimensional constant is not given an explicit numerical value.
In dimensions two and three they also obtain the local estimate
\cite[Theorem~1.5]{JL26}
\[
 \int_{B_p(r/2)}S\dd\mu_g
 \le C r^{n-2}
       \left(1-\frac{\Vol(B_p(r))}{\omega_n r^n}\right).
\]
Thus linear dependence on the volume deficit is already known.
Our next result gives an explicit bound in dimension three for
$v>0$ without Petrunin's constant.

\begin{theorem}\label{thm:main}
Let $(M^3,g)$ have $K\ge0$ and $v=V_{M^3}>0$.
Then, for every $p\in M$ and $r>0$,
\begin{equation}\label{eq:main}
 I_p(r)\le C(v)r,\qquad C(v)=\frac{12\pi(1-v)}{v^3}.
\end{equation}
\end{theorem}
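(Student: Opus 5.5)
The plan is to imitate the pole argument on distance spheres, use $K\ge0$ to control the singular contributions, and let $v>0$ control the topology of the level sets and the boundary terms. This is where the powers of $v^{-1}$ should come from.

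\textbf{Step 1: the level-set identity.} Fix $p$ and put $\Sigma_t=\partial B_p(t)$. Off the cut locus, the Gauss equation and the Riccati equation give
\[
S=2K_{\Sigma_t}+2\Ric(\nu,\nu)+|A|^2-H^2,\qquad \partial_tH=-|A|^2-\Ric(\nu,\nu),
\]
and hence $S=2K_{\Sigma_t}-2\partial_tH-H^2-|A|^2$. By coarea, $I_p(r)=\int_0^r\int_{\Sigma_t}S\,\dd t$. Writing $A(t)=|\Sigma_t|$, I would use $A'(t)\le\int_{\Sigma_t}H$ together with the fact that $\Delta\rho$ is a measure whose singular part on the cut locus is nonpositive. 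With these, the $\partial_tH$ term integrates to $-2A'(r)$ plus a term $\int_0^r\int_{\Sigma_t}(H^2-|A|^2)$. The latter has the favorable sign, since $H^2-|A|^2=2\sigma_2\le H^2/2$, and this can be absorbed.

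\textbf{Step 2: the expected bound and the role of $v$.} The outcome should be
\[
I_p(r)\le 2\int_0^r\int_{\Sigma_t}K_{\Sigma_t}\,\dd t+\bigl(\text{boundary terms in }A(r),A'(r)\bigr).
\]
The boundary terms are controlled by Bishop--Gromov and Laplacian comparison: $H\le 2/t$ and $v\cdot4\pi t^2\le A(t)\le4\pi t^2$. A suitable combination produces a factor $(1-v)$, because the deficit $4\pi t^2-A(t)$ and $\int_{\Sigma_t}(4/t^2-H^2)$ measure the failure to be Euclidean. Since $v>0$, the soul of $M$ is a point, so $M\cong\R^3$ with one end. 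I would show that for almost every $t$ the regular part of $\Sigma_t$ satisfies a Gauss--Bonnet inequality $\int_{\Sigma_t}K_{\Sigma_t}\le4\pi N(t)$, where $N(t)$ is the number of sphere components. The cut-locus ridges are concave creases seen from inside, so they contribute with the right sign. $N(t)$ is bounded by a packing argument: each component contains a point $x$ with $B_x(t/2)$ disjoint from the others, and relative volume comparison gives $N(t)\lesssim v^{-1}$.

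\textbf{Step 3: bookkeeping.} Combining the Gauss--Bonnet term with $N(t)$ gives a contribution of size $8\pi N r$. The $(1-v)$ factor must come from the curvature-free terms. The cruder bounds used in passing between them (area ratio, mean curvature ratio, component count) each lose a factor $v^{-1}$. This heuristically gives $C(v)=12\pi(1-v)v^{-3}$. I would organize the estimate so that each application of Bishop--Gromov costs exactly one factor $v^{-1}$.

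\textbf{Main obstacle.} The hardest step is the rigorous Gauss--Bonnet and integration-by-parts on distance spheres without a pole. $\Sigma_t$ is only Lipschitz, and I must show that the cut-locus contributions to both $\int_{\Sigma_t}K_{\Sigma_t}$ and $\partial_t\int_{\Sigma_t}H$ carry the favorable sign. I would first try to replace $\rho$ by a smoothed, semiconcave approximation: for $K\ge0$, $\rho$ has $\Hess\rho\le\frac1\rho(g-\dd\rho^2)$ in the barrier sense. I would apply the identity of Step 1 to the level sets of the smoothed function and then pass to the limit, extracting the $(1-v)$ factor only at the final step.
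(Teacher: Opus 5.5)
\textbf{There is a genuine gap, and it sits where you put your ``main obstacle''.} The sign claims you need for the cut-locus contributions are false without a pole, and nothing in your argument lets $v>0$ control them.

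First, near a non-conjugate cut point $\rho$ is locally a minimum of smooth branches. So $B_p(t)$ is locally a \emph{union} of smooth domains, and $\Sigma_t$ has reflex creases. These carry \emph{negative} singular intrinsic curvature. For two unit balls in $\R^3$ with centers $2d$ apart, the smooth part of the boundary of the union has total curvature $4\pi(1+d)>4\pi$. Hence the regular part of $\Sigma_t$ can have $\int K_{\Sigma_t}>2\pi\chi(\Sigma_t)$, which is the opposite of your Gauss--Bonnet inequality.

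Second, the pole is exactly what gave $0\le j'\le1$ in Lemma~\ref{lem:polar-concavity}. Without it, $j'$ can become negative before the cut time (focusing). Three things then break:
\begin{itemize}
\item $H\ge-2/t$ fails, so $\int_0^r\int_{\Sigma_t}2\sigma_2$ is no longer bounded by $8\pi r$.
\item Integrating $J''$ along rays produces the boundary term $-2\int_{S_pM}J'(\min\{r,c_p(u)\},u)\dd\theta_p(u)$, not $-2A'(r)$, and the values $J'(c_p(u),u)$ at cut times can be negative.
\item Your inequality $A'(t)\le\int_{\Sigma_t}H$ points the wrong way for converting this term anyway.
\end{itemize}

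Appendix~\ref{sec:centers} shows these are not technicalities. Those convex hypersurfaces have $K\ge0$. In the thin-double limit the balls about $p_\varepsilon$ are topological balls, so $N(t)=1$. Yet $I_p(1)\to16\pi+8\pi\theta\cos\theta>16\pi$. With $N=1$, your Step~1 inequalities are exactly those that give \eqref{eq:pole-uniform} at a pole, so they cannot hold there.

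\textbf{The factor $1-v$ and the constant are not derived.} Even granting Step~1, $v$ enters only through $N(t)$ and $v\,4\pi t^2\le A(t)\le4\pi t^2$, and this cannot produce the factor $1-v$. The Gauss--Bonnet term $8\pi Nr$ carries no such factor, so you would need a lower bound of the form $2A'(r)\ge8\pi r-O((1-v)r)$. Even at a pole, that kind of control needs the Minkowski inequality for outward-minimizing balls (Lemmas~\ref{lem:outward}--\ref{lem:area-radius}). That inequality is unavailable at a general center, and even at a pole it only yields $8\pi(1-\sqrt v)(1+2\sqrt v)$. The constant $12\pi(1-v)v^{-3}$ in your Step~3 is reverse-engineered rather than proved.

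\textbf{The paper's route.} The paper avoids distance spheres entirely:
\begin{itemize}
\item It averages the convex functions $\phi_r(b_u)$ of Busemann functions against the asymptotic density $j_p$.
\item Direction transfer (Lemma~\ref{lem:direction-transport}) and the spherical moment bounds (Lemma~\ref{lem:moments}) give $\Lip F\le\ell(v)$ and $\Hess F\ge\frac{v^2}{3r}g$ on $B_p(r)$.
\item It tests the nonnegative divergence-free tensor $\mathsf T=\frac12Sg-\Ric$ against $\nabla F$ and a cutoff, obtaining $I_p(r)\le\frac{3\ell(v)}{v^2}\bigl(\lim_{R\to\infty}I_p(R)/R\bigr)r$ (Proposition~\ref{prop:transfer}).
\end{itemize}
The factor $1-v$ comes solely from the asymptotic identity \eqref{eq:asymptotic-input}, and the $v^{-3}$ comes from $\ell(v)/v^2$, not from repeated Bishop--Gromov. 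This local-to-asymptotic transfer is the idea your proposal is missing.
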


The proof constructs a convex function by taking a weighted
spherical integral of convex compositions of Busemann functions.
The weight is the asymptotic polar volume density, and spherical
moment estimates give a quantitative local Hessian bound.
Combined with the nonnegative divergence-free tensor
$\frac12 Sg-\Ric$, this compares a fixed-radius integral with
the asymptotic identity
\begin{equation}\label{eq:asymptotic-input}
 \lim_{t\to\infty}\frac{I_p(t)}t=8\pi(1-v),
                       \qquad K\ge0,\quad v>0,
\end{equation}
from \cite[Theorem~1.1 and equation~(1.4)]{Hu26};
see also \cite[Theorem~4.2]{Xu26}.
Related volume-growth and integral-curvature results are proved
in \cite{CLL26}.
The coefficient $C(v)$ is explicit for every $v>0$, although
it diverges as $v\downarrow0$. Near $v=1$, its leading
coefficient is $12\pi$. The next result improves this to the
sharp coefficient $8\pi$ and gives an explicit quadratic error.

\begin{theorem}\label{thm:first-order}
Let $(M^3,g)$ have $K\ge0$. If
$0\le1-v\le1/100$, where $v=V_{M^3}$, then
\begin{equation}\label{eq:first-order}
 I_p(r)\le\bigl[8\pi(1-v)+200\pi(1-v)^2\bigr]r
 \qquad(p\in M,\ r>0).
\end{equation}
In particular, define
\[
 \mathcal C(v)=\sup\{I_p(1):K\ge0,\ V_{M^3}=v,\ p\in M\},
\]
where the supremum is over all manifolds as above. Then
\begin{equation}\label{eq:first-order-sharp-limit}
 \frac{\mathcal C(v)}{1-v}>8\pi+2\pi(1-v),\qquad\lim_{v\uparrow1}\frac{\mathcal C(v)}{1-v}=8\pi.
\end{equation}
\end{theorem}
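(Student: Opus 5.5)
We split Theorem~\ref{thm:first-order} into the upper bound \eqref{eq:first-order}, the lower bound on $\mathcal C(v)$, and the limit. The limit follows from the other two, since \eqref{eq:first-order} at $r=1$ gives $\mathcal C(v)/(1-v)\le 8\pi+200\pi(1-v)$.

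For the upper bound, the plan is to refine the comparison scheme behind Theorem~\ref{thm:main}. We keep the same framework: a convex function $u$ built from Busemann functions, averaged over directions with the asymptotic polar density as weight. We also keep the nonnegative divergence-free tensor $T=\frac12 Sg-\Ric$. For $K\ge0$ in dimension three, $T$ is positive semidefinite and $\tr T=\frac12 S$. Integrating $\langle T,\Hess u\rangle\ge0$ by parts over sublevel sets of $u$ converts $I_p(r)$ into a boundary flux, and this flux is monotone in the level. Comparing with the asymptotic identity \eqref{eq:asymptotic-input} as the level tends to infinity gives $I_p(r)\le \Lambda\cdot 8\pi(1-v)\,r$. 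Here $\Lambda$ is a ratio measuring how far the sublevel sets of $u$ are from the metric balls $B_p(r)$. It is controlled by the quantitative Hessian lower bound and by the spread of $|\nabla u|$.

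In Theorem~\ref{thm:main} these losses are bounded crudely by powers of $v^{-1}$, which produces $12\pi v^{-3}$. Here the key step is to expand every loss to first order in $\delta=1-v$. The weighted average of Busemann functions should be $\rho$ up to errors of order $\delta$. Its gradient should satisfy $|\nabla u|\ge 1-O(\delta)$. Its Hessian should be bounded below by $(1-O(\delta))\rho^{-1}(g-d\rho^2)$ on $B_p(r)$. Each of these requires spherical moment estimates of the form: the asymptotic density deviates from uniform by at most $O(\delta)$ in $L^1(\Sph^2)$. I expect this to follow from Bishop--Gromov together with volume rigidity: total mass $v\cdot 4\pi$ against a pointwise bound of $1$.

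The main obstacle is this first-order control. Because the leading term already carries a factor $\delta$, any multiplicative loss $1+O(\delta)$ costs only $O(\delta^2)$, so the real task is to rule out an additive error of order $\delta$. I would track the explicit constants through the hypothesis $\delta\le1/100$, which lets terms like $(1-\delta)^{-3}$ be absorbed into $1+4\delta$. With bookkeeping generous enough, the resulting quadratic constant should fit under $200\pi$.

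For the lower bound $\mathcal C(v)>(1-v)(8\pi+2\pi(1-v))$, the plan is an explicit example. By Theorem~\ref{thm:rotational} in $n=3$, a rotationally symmetric metric can reach $16\pi(1-\sqrt v)$ at $r=1$. The equality profile is a cone of slope $\sqrt v$, with all curvature concentrated at a small cap; after smoothing it has $K\ge0$. Write $s=1-v$. Then
\[
16\pi(1-\sqrt{1-s})=8\pi s+2\pi s^2+\tfrac{5}{4}\pi s^3+\cdots>8\pi s+2\pi s^2 .
\]
The sharpness statement of Theorem~\ref{thm:rotational} supplies metrics approaching this value, and the strict series inequality leaves room for the approximation error. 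This gives the stated strict inequality. It also shows that a quadratic remainder in \eqref{eq:first-order} is genuinely needed.
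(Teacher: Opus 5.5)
\textbf{Lower bound and limit.} Your lower bound on $\mathcal C(v)$ and the limit are fine. The paper argues the same way, via the identity $16\pi(1-\sqrt v)=8\pi(1-v)+8\pi(1-v)^2/(1+\sqrt v)^2$. Two slips there are harmless.
\begin{itemize}
\item The cubic coefficient of $16\pi(1-\sqrt{1-s})$ is $\pi$, not $\tfrac54\pi$.
\item The extremal profiles put their curvature in a thin annulus just inside $\partial B_p(1)$, not in a small cap at $p$. Smoothing the cone tip would only give $8\pi(1-v)$.
\end{itemize}

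\textbf{The gap is in the upper bound.} You never construct the convex function, and the one you describe cannot have the properties you list.
\begin{itemize}
\item The weighted linear average $\overline b$ of Busemann functions is not $\rho+O(\delta\rho)$. The $L^1$ closeness of the density to $1$ that you invoke gives the opposite: by the odd-moment estimate, $|\nabla\overline b|\le\delta$ and $|\overline b|\le\delta\rho$. In $\R^3$, $\overline b\equiv0$.
\item Its Hessian $\overline{\Hess b}\ge0$ has no quantitative lower bound; it vanishes in $\R^3$. So neither $|\nabla u|\ge1-O(\delta)$ nor $\Hess u\ge(1-O(\delta))\rho^{-1}(g-\dd\rho\otimes\dd\rho)$ can hold.
\item Refining Theorem~\ref{thm:main} does not help either. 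There $F=\overline{\phi_r(b)}$ already has, in $\R^3$, $\Lip(F)=1/2$ and $\Hess F=g/(3r)$ on $B_p(r)$. The ratio $L_0/\kappa=3r/2$ gives $12\pi$ at $v=1$, so this loss is structural, not a matter of powers of $v^{-1}$.
\item The transfer argument needs a full bound $\Hess F\ge\kappa g$ on $B_p(r)$ together with a global Lipschitz bound. A radially degenerate bound only yields $\ip{\mathsf T}{\Hess u}\ge\rho^{-1}\Ric(\nabla\rho,\nabla\rho)$. This misses the tangential curvature and so does not control $S$.
\end{itemize}

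\textbf{The missing construction.} You need to build, from Busemann functions, a convex function modeled on $\rho^2/2$ with $L_0/\kappa=r(1+O(\delta))$. The natural candidate is the root mean square $q=\sqrt{\overline{b^2}}$, which equals $\rho/\sqrt3$ in $\R^3$. Put $A=\overline{\dd b\otimes\dd b}$. The moment bounds at $x$ give $\dd q\otimes\dd q\le A\le Dg$ and $A\ge mg$; these bounds use Lemma~\ref{lem:direction-transport} to identify the law of $\nabla b_u(x)$ with $j_x$. However,
\[
 \Hess q=\frac{A-\dd q\otimes\dd q}{q}+\frac1q\,\overline{b\,\Hess b},
\]
and the last term has no sign, since each $b_u$ changes sign.

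The paper's device is to add $2h=2\overline b$.
\begin{itemize}
\item Because $\Lip(h)\le\delta$, this raises the Lipschitz constant by at most $2\delta$.
\item It turns the bad term into $\overline{(2+b/q)\Hess b}$, which is nonnegative once $|b_u|\le2q$.
\item That requires $q\ge\rho/2$. This follows from Toponogov's hinge inequality $b_u(x)\ge\rho\ip{u}{w}$, together with $|\overline b|\le\delta\rho$ and $\overline{\ip{u}{w}^2}\ge m$.
\end{itemize}
Then $F=H_a(q+2h)$ is quadratic on $B_p(1)$ and linear beyond. It satisfies $\Lip(F)\le(1+10\delta)/3$ and $\Hess F\ge(1-10\delta)g/3$ on $B_p(1)$. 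The cutoff $\eta_R=(1-\rho/R)_+$ of Proposition~\ref{prop:transfer} then gives $8\pi\delta(1+10\delta)/(1-10\delta)\le8\pi\delta+200\pi\delta^2$ directly, with no need to compare level sets of $F$ with metric spheres. Without a construction of this kind, the first-order bookkeeping you describe has nothing to act on.
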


The proof uses the same weighted Busemann functions as
Theorem~\ref{thm:main}. Near full volume, their weighted first
moment has Lipschitz constant at most $1-v$, while their second
moment is uniformly close to the Euclidean one. Adding twice
the first moment to the square root of the second produces
a convex function with explicit Hessian and Lipschitz bounds.

Section~\ref{sec:rotational} proves Theorem~\ref{thm:rotational}.
Section~\ref{sec:pole} proves Theorem~\ref{thm:piecewise}.
Section~\ref{sec:sectional} proves Theorem~\ref{thm:main}.
Section~\ref{sec:first-order} proves Theorem~\ref{thm:first-order}.
Appendix~\ref{sec:centers} gives the convex hypersurface examples
showing the necessity of the pole assumption on the center.

\section{Rotationally symmetric manifolds}
\label{sec:rotational}

Consider a smooth rotational metric on $\R^n$,
\begin{equation}\label{eq:rot-metric}
 g=\mathrm dt^2+f(t)^2g_{\Sph^{n-1}},\qquad
 f(0)=0,\quad f'(0)=1,\quad f(t)>0\quad(t>0).
\end{equation}
We assume the usual smoothness condition at the origin: $f$ has
a smooth odd extension near zero. The origin is $p$ and
$t=d(p,\cdot)$.

\begin{lemma}\label{lem:rot-concavity}
If the metric \eqref{eq:rot-metric} has $\Ric\ge0$, then
\begin{equation}\label{eq:rot-concavity}
 f''\le0,\qquad 0\le f'\le1,\qquad f(t)\le t.
\end{equation}
Moreover,
\begin{equation}\label{eq:rot-avr}
 a:=\lim_{t\to\infty}f'(t)
   =\lim_{t\to\infty}\frac{f(t)}t,
 \qquad V_{M^n}=a^{n-1}.
\end{equation}
The scalar curvature $S$ is a function of $t$ and
\begin{equation}\label{eq:rot-scalar}
	S(t)
	=-2(n-1)\frac{f''(t)}{f(t)}
	+(n-1)(n-2)\frac{1-f'(t)^2}{f(t)^2},
	\qquad t>0.
\end{equation}
\end{lemma}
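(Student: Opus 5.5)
I will start by computing the curvature of the warped product $g=\mathrm dt^2+f^2g_{\Sph^{n-1}}$ in an orthonormal frame $\{\partial_t,e_1,\dots,e_{n-1}\}$ with $e_i$ tangent to the spheres. The standard formulas give radial sectional curvatures $K(\partial_t\wedge e_i)=-f''/f$ and tangential ones $K(e_i\wedge e_j)=(1-f'^2)/f^2$. Hence
\[
\Ric(\partial_t,\partial_t)=-(n-1)\frac{f''}{f},\qquad
\Ric(e_i,e_i)=-\frac{f''}{f}+(n-2)\frac{1-f'^2}{f^2},
\]
and the mixed terms vanish. Summing over the frame, $S=2\sum_{i<j}K$ has $n-1$ radial planes and $\binom{n-1}{2}$ tangential planes. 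This gives \eqref{eq:rot-scalar} directly, and it shows that $S$ depends only on $t$.

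For \eqref{eq:rot-concavity}, the radial Ricci inequality $\Ric(\partial_t,\partial_t)\ge0$ together with $f>0$ for $t>0$ gives $f''\le0$ on $(0,\infty)$. So $f'$ is nonincreasing, and $f'(0)=1$ gives $f'\le1$. Integrating from $f(0)=0$ yields $f(t)\le t$. To get $f'\ge0$, I argue by contradiction. If $f'(t_0)<0$ for some $t_0$, concavity forces $f(t)\le f(t_0)+f'(t_0)(t-t_0)$, which becomes negative in finite time. This contradicts $f>0$ on $(0,\infty)$, which is part of the completeness and smoothness assumption for the metric on $\R^n$. Completeness with $t\in[0,\infty)$ is what is used here.

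For \eqref{eq:rot-avr}, I will use the following facts.
\begin{itemize}
\item Since $f'$ is nonincreasing and bounded below by $0$, the limit $a=\lim f'(t)\in[0,1]$ exists.
\item Because $f(t)=\int_0^tf'$, the average $f(t)/t$ is a Cesàro mean of a convergent function, so it also tends to $a$.
\item The volume is $\Vol(B_p(t))=|\Sph^{n-1}|\int_0^tf(s)^{n-1}\dd s$, with $|\Sph^{n-1}|=n\omega_n$.
\item Since $f(s)^{n-1}=a^{n-1}s^{n-1}+o(s^{n-1})$, we get $\Vol(B_p(t))=\omega_na^{n-1}t^n+o(t^n)$, which gives $V_{M^n}=a^{n-1}$.
\end{itemize}
When $a=0$, the same estimate applies with $f(s)\le\varepsilon s$ for large $s$.

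The only mild obstacle is the sign $f'\ge0$: it needs positivity of $f$ on the entire half-line. Everything else is routine warped-product calculus and a Cesàro-type limit argument.
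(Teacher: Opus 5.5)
Your proposal is correct and follows essentially the same route as the paper. In both, the radial Ricci term gives $f''\le0$, positivity of $f$ on the half-line forces $f'\ge0$, averaging gives $f(t)/t\to a$, and the warped-product sectional curvatures $-f''/f$ and $(1-f'^2)/f^2$ yield the scalar curvature formula; the only cosmetic difference is that you obtain $V_{M^n}=a^{n-1}$ from the asymptotic $f(s)^{n-1}=a^{n-1}s^{n-1}+o(s^{n-1})$, whereas the paper rescales $t=rs$ and applies dominated convergence, which is equivalent.
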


\begin{proof}
Since
\[
 \Ric(\partial_t,\partial_t)=-(n-1)\frac{f''}{f},
\]
we have $f''\le0$. A negative value of $f'$ would force $f$ to
become negative at a finite distance, so $f'\ge0$. Also, $f'$ is nonincreasing
and $f'(0)=1$, so $f'\le1$. Hence $0\le f'\le1$ and
$f(t)\le t$. The nonincreasing function $f'$ has a limit
$a\in[0,1]$, and averaging $f'$ gives $f(t)/t\to a$.
Using $|\Sph^{n-1}|=n\omega_n$, dominated convergence yields
\[
 V_{M^n}=\lim_{r\to\infty}
 n\int_0^1\left(\frac{f(rs)}r\right)^{n-1}\dd s
 =n\int_0^1(as)^{n-1}\dd s=a^{n-1}.
\]
The radial sectional curvatures are $-f''/f$. For $n\ge3$,
the tangential sectional curvatures are $(1-f'^2)/f^2$.
From these, we get \eqref{eq:rot-scalar}.
\end{proof}

\begin{proof}[Proof of Theorem~\ref{thm:rotational}]
First assume $n\ge3$, and put $u=f'$ and $m=-u'\ge0$.
The scalar curvature formula \eqref{eq:rot-scalar} gives
\begin{equation}\label{eq:rot-integral}
 \frac{1}{n(n-1)\omega_n}\int_{B_p(r)}S\dd\mu_g
 =(n-2)\int_0^r f^{n-3}(1-u^2)\dd t
       +2\int_0^r f^{n-2}m\dd t.
\end{equation}
Since $1-u(t)=\int_0^t m(s)\dd s$, rewrites
the right-hand side as
\[
 \int_0^r m(s)\left[
  2f(s)^{n-2}+(n-2)\int_s^r f(t)^{n-3}(1+u(t))\dd t
                  \right]\dd s.
\]
By Lemma~\ref{lem:rot-concavity}, the expression in brackets
is at most
\[
 2s^{n-2}+2(n-2)\int_s^r t^{n-3}\dd t=2r^{n-2}.
\]
As $u(r)\ge a=V_{M^n}^{1/(n-1)}$, it follows that
\[
 \int_{B_p(r)}S\dd\mu_g
 \le2n(n-1)\omega_n r^{n-2}\int_0^r m(s)\dd s
 \le2n(n-1)\omega_n(1-a)r^{n-2}.
\]
When $n=2$, the identity $S=-2f''/f$ gives directly
\begin{equation}\label{eq:rot-two}
 \int_{B_p(r)}S\dd\mu_g
       =4\pi(1-f'(r))\le4\pi(1-V_{M^2}).
\end{equation}

We now prove sharpness. Fix $r>0$ and
$a=V_{M^n}^{1/(n-1)}\in[0,1)$. Choose a smooth nondecreasing
function $\chi:\R\to[0,1]$ with $\chi=0$ on $(-\infty,0]$
and $\chi=1$ on $[1,\infty)$. For $0<\varepsilon<r/4$, set
\begin{equation}\label{eq:rot-example}
 \begin{split}
 u_\varepsilon(t)
   &=1-(1-a)\chi\left(\frac{t-r+2\varepsilon}{\varepsilon}\right),\\
 f_\varepsilon(t)&=\int_0^t u_\varepsilon(s)\dd s,\qquad
 g_\varepsilon=\mathrm dt^2+
                         f_\varepsilon(t)^2g_{\Sph^{n-1}}.
 \end{split}
\end{equation}
These metrics are smooth and Euclidean near $p$. They are complete,
have a pole at $p$, and have nonnegative sectional curvature, since
$a\le f_\varepsilon'\le1$ and $f_\varepsilon''\le0$.
Their asymptotic volume ratio is exactly $a^{n-1}$. For $a=0$,
the end is a cylinder of positive radius; for $0<a<1$, it is
a truncated cone. The geometry of the construction is shown in
Figures~\ref{fig:rot-zero} and~\ref{fig:rot-quarter}, with $n=3$
and $r=1$. Its meridian profile can be represented by
\[
 (z_\varepsilon(t),f_\varepsilon(t)),\qquad
 z_\varepsilon(t)=\int_0^t\sqrt{1-u_\varepsilon(s)^2}\dd s.
\]
Thus the Euclidean ball $t\le r-2\varepsilon$ is joined to the
end through the thin annulus
$r-2\varepsilon<t<r-\varepsilon$, entirely inside $B_p(r)$.

For $n\ge3$, the function $m_\varepsilon=-u_\varepsilon'$ is supported in
$[r-2\varepsilon,r-\varepsilon]$, with integral $1-a$.
On this interval, $f_\varepsilon\to r$ uniformly. Thus the
second term on the right of \eqref{eq:rot-integral} tends to
$2(1-a)r^{n-2}$. Its first term lies between zero and
$2(n-2)\varepsilon r^{n-3}$, and hence tends to zero.
Consequently, for $n\ge3$,
\[
 \lim_{\varepsilon\downarrow0}
 \int_{B_p^{g_\varepsilon}(r)}S_{g_\varepsilon}
                         \dd\mu_{g_\varepsilon}
       =2n(n-1)\omega_n(1-a)r^{n-2}.
\]
For $n=2$, equality follows from \eqref{eq:rot-two}, because
$f_\varepsilon'(r)=a$. Finally, if $a=1$, the metric is
Euclidean and equality holds.
\end{proof}

\begin{figure}[htbp]
\centering
\begin{tikzpicture}[font=\footnotesize,>=Latex,line cap=round,line join=round]
\begin{scope}[x=1.65cm,y=1.35cm]
\fill[geoamber!12] (.84,0) rectangle (.92,1.82);
\draw[->,gray!70] (0,0)--(2.4,0) node[right] {$t$};
\draw[->,gray!70] (0,0)--(0,1.9) node[above] {$f_\varepsilon(t)$};
\draw[gray,dashed] (0,0)--(1,1)--(2.2,1);
\draw[geoblue,very thick] plot coordinates {(0.0000,0.0000) (0.0200,0.0200) (0.0400,0.0400) (0.0600,0.0600) (0.0800,0.0800) (0.1000,0.1000) (0.1200,0.1200) (0.1400,0.1400) (0.1600,0.1600) (0.1800,0.1800) (0.2000,0.2000) (0.2200,0.2200) (0.2400,0.2400) (0.2600,0.2600) (0.2800,0.2800) (0.3000,0.3000) (0.3200,0.3200) (0.3400,0.3400) (0.3600,0.3600) (0.3800,0.3800) (0.4000,0.4000) (0.4200,0.4200) (0.4400,0.4400) (0.4600,0.4600) (0.4800,0.4800) (0.5000,0.5000) (0.5200,0.5200) (0.5400,0.5400) (0.5600,0.5600) (0.5800,0.5800) (0.6000,0.6000) (0.6200,0.6200) (0.6400,0.6400) (0.6600,0.6600) (0.6800,0.6800) (0.7000,0.7000) (0.7200,0.7200) (0.7400,0.7400) (0.7600,0.7600) (0.7800,0.7800) (0.8000,0.8000) (0.8200,0.8200) (0.8250,0.8250) (0.8275,0.8275) (0.8300,0.8300) (0.8325,0.8325) (0.8350,0.8350) (0.8375,0.8375) (0.8400,0.8400) (0.8425,0.8425) (0.8450,0.8450) (0.8475,0.8475) (0.8500,0.8500) (0.8525,0.8525) (0.8550,0.8550) (0.8575,0.8574) (0.8600,0.8598) (0.8625,0.8621) (0.8650,0.8643) (0.8675,0.8663) (0.8700,0.8683) (0.8725,0.8700) (0.8750,0.8717) (0.8775,0.8732) (0.8800,0.8745) (0.8825,0.8757) (0.8850,0.8767) (0.8875,0.8775) (0.8900,0.8783) (0.8925,0.8788) (0.8950,0.8793) (0.8975,0.8796) (0.9000,0.8798) (0.9025,0.8799) (0.9050,0.8800) (0.9075,0.8800) (0.9100,0.8800) (0.9125,0.8800) (0.9150,0.8800) (0.9175,0.8800) (0.9200,0.8800) (0.9225,0.8800) (0.9250,0.8800) (0.9275,0.8800) (0.9300,0.8800) (0.9325,0.8800) (0.9350,0.8800) (0.9375,0.8800) (0.9400,0.8800) (0.9425,0.8800) (0.9450,0.8800) (0.9475,0.8800) (0.9600,0.8800) (0.9800,0.8800) (1.0000,0.8800) (1.0200,0.8800) (1.0400,0.8800) (1.0600,0.8800) (1.0800,0.8800) (1.1000,0.8800) (1.1200,0.8800) (1.1400,0.8800) (1.1600,0.8800) (1.1800,0.8800) (1.2000,0.8800) (1.2200,0.8800) (1.2400,0.8800) (1.2600,0.8800) (1.2800,0.8800) (1.3000,0.8800) (1.3200,0.8800) (1.3400,0.8800) (1.3600,0.8800) (1.3800,0.8800) (1.4000,0.8800) (1.4200,0.8800) (1.4400,0.8800) (1.4600,0.8800) (1.4800,0.8800) (1.5000,0.8800) (1.5200,0.8800) (1.5400,0.8800) (1.5600,0.8800) (1.5800,0.8800) (1.6000,0.8800) (1.6200,0.8800) (1.6400,0.8800) (1.6600,0.8800) (1.6800,0.8800) (1.7000,0.8800) (1.7200,0.8800) (1.7400,0.8800) (1.7600,0.8800) (1.7800,0.8800) (1.8000,0.8800) (1.8200,0.8800) (1.8400,0.8800) (1.8600,0.8800) (1.8800,0.8800) (1.9000,0.8800) (1.9200,0.8800) (1.9400,0.8800) (1.9600,0.8800) (1.9800,0.8800) (2.0000,0.8800) (2.0200,0.8800) (2.0400,0.8800) (2.0600,0.8800) (2.0800,0.8800) (2.1000,0.8800) (2.1200,0.8800) (2.1400,0.8800) (2.1600,0.8800) (2.1800,0.8800) (2.2000,0.8800)};
\draw[gray!60,densely dashed] (1,0)--(1,1.7);
\draw (1,.025)--(1,-.025) node[below] {$1$};
\draw (.025,1)--(-.025,1) node[left] {$1$};
\node[anchor=north east] at (-.02,-.02) {$0$};
\node[geoamber,align=center] at (.67,1.55) {thin\\transition};
\draw[geoamber,->] (.75,1.25)--(.88,.96);
\node at (1.1,-.52) {warping function};
\end{scope}
\begin{scope}[shift={(6.4cm,1.05cm)},x=2.05cm,y=1.02cm]
\fill[geoblue!6] (0,0) ellipse[x radius=.11,y radius=.84];
\draw[geoblue!55,dashed] (0,0) ellipse[x radius=.11,y radius=.84];
\draw[gray!60,->] (-.22,0)--(1.5640,0) node[right] {$z$};
\draw[geoblue,thick] plot coordinates {(0.0000,0.0000) (0.0000,0.0200) (0.0000,0.0400) (0.0000,0.0600) (0.0000,0.0800) (0.0000,0.1000) (0.0000,0.1200) (0.0000,0.1400) (0.0000,0.1600) (0.0000,0.1800) (0.0000,0.2000) (0.0000,0.2200) (0.0000,0.2400) (0.0000,0.2600) (0.0000,0.2800) (0.0000,0.3000) (0.0000,0.3200) (0.0000,0.3400) (0.0000,0.3600) (0.0000,0.3800) (0.0000,0.4000) (0.0000,0.4200) (0.0000,0.4400) (0.0000,0.4600) (0.0000,0.4800) (0.0000,0.5000) (0.0000,0.5200) (0.0000,0.5400) (0.0000,0.5600) (0.0000,0.5800) (0.0000,0.6000) (0.0000,0.6200) (0.0000,0.6400) (0.0000,0.6600) (0.0000,0.6800) (0.0000,0.7000) (0.0000,0.7200) (0.0000,0.7400) (0.0000,0.7600) (0.0000,0.7800) (0.0000,0.8000) (0.0000,0.8200) (0.0000,0.8250) (0.0000,0.8275) (0.0000,0.8300) (0.0000,0.8325) (0.0000,0.8350) (0.0000,0.8375) (0.0000,0.8400) (0.0000,0.8425) (0.0000,0.8450) (0.0000,0.8475) (0.0001,0.8500) (0.0003,0.8525) (0.0006,0.8550) (0.0012,0.8574) (0.0019,0.8598) (0.0029,0.8621) (0.0041,0.8643) (0.0056,0.8663) (0.0071,0.8683) (0.0089,0.8700) (0.0108,0.8717) (0.0128,0.8732) (0.0149,0.8745) (0.0171,0.8757) (0.0194,0.8767) (0.0217,0.8775) (0.0241,0.8783) (0.0266,0.8788) (0.0290,0.8793) (0.0315,0.8796) (0.0340,0.8798) (0.0365,0.8799) (0.0390,0.8800) (0.0415,0.8800) (0.0440,0.8800) (0.0465,0.8800) (0.0490,0.8800) (0.0515,0.8800) (0.0540,0.8800) (0.0565,0.8800) (0.0590,0.8800) (0.0615,0.8800) (0.0640,0.8800) (0.0665,0.8800) (0.0690,0.8800) (0.0715,0.8800) (0.0740,0.8800) (0.0765,0.8800) (0.0790,0.8800) (0.0815,0.8800) (0.0940,0.8800) (0.1140,0.8800) (0.1340,0.8800) (0.1540,0.8800) (0.1740,0.8800) (0.1940,0.8800) (0.2140,0.8800) (0.2340,0.8800) (0.2540,0.8800) (0.2740,0.8800) (0.2940,0.8800) (0.3140,0.8800) (0.3340,0.8800) (0.3540,0.8800) (0.3740,0.8800) (0.3940,0.8800) (0.4140,0.8800) (0.4340,0.8800) (0.4540,0.8800) (0.4740,0.8800) (0.4940,0.8800) (0.5140,0.8800) (0.5340,0.8800) (0.5540,0.8800) (0.5740,0.8800) (0.5940,0.8800) (0.6140,0.8800) (0.6340,0.8800) (0.6540,0.8800) (0.6740,0.8800) (0.6940,0.8800) (0.7140,0.8800) (0.7340,0.8800) (0.7540,0.8800) (0.7740,0.8800) (0.7940,0.8800) (0.8140,0.8800) (0.8340,0.8800) (0.8540,0.8800) (0.8740,0.8800) (0.8940,0.8800) (0.9140,0.8800) (0.9340,0.8800) (0.9540,0.8800) (0.9740,0.8800) (0.9940,0.8800) (1.0140,0.8800) (1.0340,0.8800) (1.0540,0.8800) (1.0740,0.8800) (1.0940,0.8800) (1.1140,0.8800) (1.1340,0.8800) (1.1540,0.8800) (1.1740,0.8800) (1.1940,0.8800) (1.2140,0.8800) (1.2340,0.8800) (1.2540,0.8800) (1.2740,0.8800) (1.2940,0.8800) (1.3140,0.8800) (1.3340,0.8800)};
\draw[geoblue,thick] plot coordinates {(0.0000,-0.0000) (0.0000,-0.0200) (0.0000,-0.0400) (0.0000,-0.0600) (0.0000,-0.0800) (0.0000,-0.1000) (0.0000,-0.1200) (0.0000,-0.1400) (0.0000,-0.1600) (0.0000,-0.1800) (0.0000,-0.2000) (0.0000,-0.2200) (0.0000,-0.2400) (0.0000,-0.2600) (0.0000,-0.2800) (0.0000,-0.3000) (0.0000,-0.3200) (0.0000,-0.3400) (0.0000,-0.3600) (0.0000,-0.3800) (0.0000,-0.4000) (0.0000,-0.4200) (0.0000,-0.4400) (0.0000,-0.4600) (0.0000,-0.4800) (0.0000,-0.5000) (0.0000,-0.5200) (0.0000,-0.5400) (0.0000,-0.5600) (0.0000,-0.5800) (0.0000,-0.6000) (0.0000,-0.6200) (0.0000,-0.6400) (0.0000,-0.6600) (0.0000,-0.6800) (0.0000,-0.7000) (0.0000,-0.7200) (0.0000,-0.7400) (0.0000,-0.7600) (0.0000,-0.7800) (0.0000,-0.8000) (0.0000,-0.8200) (0.0000,-0.8250) (0.0000,-0.8275) (0.0000,-0.8300) (0.0000,-0.8325) (0.0000,-0.8350) (0.0000,-0.8375) (0.0000,-0.8400) (0.0000,-0.8425) (0.0000,-0.8450) (0.0000,-0.8475) (0.0001,-0.8500) (0.0003,-0.8525) (0.0006,-0.8550) (0.0012,-0.8574) (0.0019,-0.8598) (0.0029,-0.8621) (0.0041,-0.8643) (0.0056,-0.8663) (0.0071,-0.8683) (0.0089,-0.8700) (0.0108,-0.8717) (0.0128,-0.8732) (0.0149,-0.8745) (0.0171,-0.8757) (0.0194,-0.8767) (0.0217,-0.8775) (0.0241,-0.8783) (0.0266,-0.8788) (0.0290,-0.8793) (0.0315,-0.8796) (0.0340,-0.8798) (0.0365,-0.8799) (0.0390,-0.8800) (0.0415,-0.8800) (0.0440,-0.8800) (0.0465,-0.8800) (0.0490,-0.8800) (0.0515,-0.8800) (0.0540,-0.8800) (0.0565,-0.8800) (0.0590,-0.8800) (0.0615,-0.8800) (0.0640,-0.8800) (0.0665,-0.8800) (0.0690,-0.8800) (0.0715,-0.8800) (0.0740,-0.8800) (0.0765,-0.8800) (0.0790,-0.8800) (0.0815,-0.8800) (0.0940,-0.8800) (0.1140,-0.8800) (0.1340,-0.8800) (0.1540,-0.8800) (0.1740,-0.8800) (0.1940,-0.8800) (0.2140,-0.8800) (0.2340,-0.8800) (0.2540,-0.8800) (0.2740,-0.8800) (0.2940,-0.8800) (0.3140,-0.8800) (0.3340,-0.8800) (0.3540,-0.8800) (0.3740,-0.8800) (0.3940,-0.8800) (0.4140,-0.8800) (0.4340,-0.8800) (0.4540,-0.8800) (0.4740,-0.8800) (0.4940,-0.8800) (0.5140,-0.8800) (0.5340,-0.8800) (0.5540,-0.8800) (0.5740,-0.8800) (0.5940,-0.8800) (0.6140,-0.8800) (0.6340,-0.8800) (0.6540,-0.8800) (0.6740,-0.8800) (0.6940,-0.8800) (0.7140,-0.8800) (0.7340,-0.8800) (0.7540,-0.8800) (0.7740,-0.8800) (0.7940,-0.8800) (0.8140,-0.8800) (0.8340,-0.8800) (0.8540,-0.8800) (0.8740,-0.8800) (0.8940,-0.8800) (0.9140,-0.8800) (0.9340,-0.8800) (0.9540,-0.8800) (0.9740,-0.8800) (0.9940,-0.8800) (1.0140,-0.8800) (1.0340,-0.8800) (1.0540,-0.8800) (1.0740,-0.8800) (1.0940,-0.8800) (1.1140,-0.8800) (1.1340,-0.8800) (1.1540,-0.8800) (1.1740,-0.8800) (1.1940,-0.8800) (1.2140,-0.8800) (1.2340,-0.8800) (1.2540,-0.8800) (1.2740,-0.8800) (1.2940,-0.8800) (1.3140,-0.8800) (1.3340,-0.8800)};
\draw[geoamber,line width=2pt] plot coordinates {(0.0000,0.8400) (0.0000,0.8420) (0.0000,0.8440) (0.0000,0.8460) (0.0000,0.8480) (0.0001,0.8500) (0.0002,0.8520) (0.0005,0.8540) (0.0008,0.8559) (0.0013,0.8579) (0.0019,0.8598) (0.0027,0.8616) (0.0036,0.8634) (0.0047,0.8651) (0.0059,0.8667) (0.0071,0.8683) (0.0085,0.8697) (0.0100,0.8710) (0.0116,0.8723) (0.0132,0.8734) (0.0149,0.8745) (0.0167,0.8754) (0.0185,0.8763) (0.0203,0.8770) (0.0222,0.8777) (0.0241,0.8783) (0.0261,0.8787) (0.0280,0.8791) (0.0300,0.8794) (0.0320,0.8796) (0.0340,0.8798) (0.0360,0.8799) (0.0380,0.8799) (0.0400,0.8800) (0.0420,0.8800) (0.0440,0.8800) (0.0460,0.8800) (0.0480,0.8800) (0.0500,0.8800) (0.0520,0.8800) (0.0540,0.8800)};
\draw[geoamber,line width=2pt] plot coordinates {(0.0000,-0.8400) (0.0000,-0.8420) (0.0000,-0.8440) (0.0000,-0.8460) (0.0000,-0.8480) (0.0001,-0.8500) (0.0002,-0.8520) (0.0005,-0.8540) (0.0008,-0.8559) (0.0013,-0.8579) (0.0019,-0.8598) (0.0027,-0.8616) (0.0036,-0.8634) (0.0047,-0.8651) (0.0059,-0.8667) (0.0071,-0.8683) (0.0085,-0.8697) (0.0100,-0.8710) (0.0116,-0.8723) (0.0132,-0.8734) (0.0149,-0.8745) (0.0167,-0.8754) (0.0185,-0.8763) (0.0203,-0.8770) (0.0222,-0.8777) (0.0241,-0.8783) (0.0261,-0.8787) (0.0280,-0.8791) (0.0300,-0.8794) (0.0320,-0.8796) (0.0340,-0.8798) (0.0360,-0.8799) (0.0380,-0.8799) (0.0400,-0.8800) (0.0420,-0.8800) (0.0440,-0.8800) (0.0460,-0.8800) (0.0480,-0.8800) (0.0500,-0.8800) (0.0520,-0.8800) (0.0540,-0.8800)};
\draw[geoblue!60,densely dashed] (0.1340,0) ellipse[x radius=0.0968,y radius=0.8800];
\draw[geoblue!65] (1.3340,0) ellipse[x radius=0.0968,y radius=0.8800];
\fill (0,0) circle[radius=1.4pt] node[below left=2pt] {$p$};
\node[align=center] at (-.31,1.31) {flat\\cap};
\draw[gray!70,->] (-.23,1.02)--(-.01,.52);
\node at (0.8271,1.2200) {cylindrical end};
\node at (0.6003,-1.95) {meridian model};
\end{scope}
\node[anchor=west] at (0,3.27) {$v=0$, $a=0$};
\end{tikzpicture}
\caption{The sharpness construction for $n=3$, $r=1$, and $v=0$.
The solid curve is a smooth approximating profile; the dashed
curve on the left is its limit $\min\{t,1\}$. On the right,
one angular direction is suppressed: a flat cap is rounded in
the orange annulus and continued by a cylinder. The annulus lies
inside $B_p(1)$, and $I_p(1)\to16\pi$.}
\label{fig:rot-zero}
\end{figure}

\begin{figure}[htbp]
\centering
\begin{tikzpicture}[font=\footnotesize,>=Latex,line cap=round,line join=round]
\begin{scope}[x=1.65cm,y=1.35cm]
\fill[geoamber!12] (.84,0) rectangle (.92,1.82);
\draw[->,gray!70] (0,0)--(2.4,0) node[right] {$t$};
\draw[->,gray!70] (0,0)--(0,1.9) node[above] {$f_\varepsilon(t)$};
\draw[gray,dashed] (0,0)--(1,1)--(2.2,1.6);
\draw[geoblue,very thick] plot coordinates {(0.0000,0.0000) (0.0200,0.0200) (0.0400,0.0400) (0.0600,0.0600) (0.0800,0.0800) (0.1000,0.1000) (0.1200,0.1200) (0.1400,0.1400) (0.1600,0.1600) (0.1800,0.1800) (0.2000,0.2000) (0.2200,0.2200) (0.2400,0.2400) (0.2600,0.2600) (0.2800,0.2800) (0.3000,0.3000) (0.3200,0.3200) (0.3400,0.3400) (0.3600,0.3600) (0.3800,0.3800) (0.4000,0.4000) (0.4200,0.4200) (0.4400,0.4400) (0.4600,0.4600) (0.4800,0.4800) (0.5000,0.5000) (0.5200,0.5200) (0.5400,0.5400) (0.5600,0.5600) (0.5800,0.5800) (0.6000,0.6000) (0.6200,0.6200) (0.6400,0.6400) (0.6600,0.6600) (0.6800,0.6800) (0.7000,0.7000) (0.7200,0.7200) (0.7400,0.7400) (0.7600,0.7600) (0.7800,0.7800) (0.8000,0.8000) (0.8200,0.8200) (0.8250,0.8250) (0.8275,0.8275) (0.8300,0.8300) (0.8325,0.8325) (0.8350,0.8350) (0.8375,0.8375) (0.8400,0.8400) (0.8425,0.8425) (0.8450,0.8450) (0.8475,0.8475) (0.8500,0.8500) (0.8525,0.8525) (0.8550,0.8550) (0.8575,0.8575) (0.8600,0.8599) (0.8625,0.8623) (0.8650,0.8646) (0.8675,0.8669) (0.8700,0.8691) (0.8725,0.8713) (0.8750,0.8733) (0.8775,0.8753) (0.8800,0.8772) (0.8825,0.8791) (0.8850,0.8808) (0.8875,0.8825) (0.8900,0.8841) (0.8925,0.8857) (0.8950,0.8871) (0.8975,0.8885) (0.9000,0.8899) (0.9025,0.8912) (0.9050,0.8925) (0.9075,0.8937) (0.9100,0.8950) (0.9125,0.8962) (0.9150,0.8975) (0.9175,0.8987) (0.9200,0.9000) (0.9225,0.9012) (0.9250,0.9025) (0.9275,0.9037) (0.9300,0.9050) (0.9325,0.9062) (0.9350,0.9075) (0.9375,0.9087) (0.9400,0.9100) (0.9425,0.9112) (0.9450,0.9125) (0.9475,0.9137) (0.9600,0.9200) (0.9800,0.9300) (1.0000,0.9400) (1.0200,0.9500) (1.0400,0.9600) (1.0600,0.9700) (1.0800,0.9800) (1.1000,0.9900) (1.1200,1.0000) (1.1400,1.0100) (1.1600,1.0200) (1.1800,1.0300) (1.2000,1.0400) (1.2200,1.0500) (1.2400,1.0600) (1.2600,1.0700) (1.2800,1.0800) (1.3000,1.0900) (1.3200,1.1000) (1.3400,1.1100) (1.3600,1.1200) (1.3800,1.1300) (1.4000,1.1400) (1.4200,1.1500) (1.4400,1.1600) (1.4600,1.1700) (1.4800,1.1800) (1.5000,1.1900) (1.5200,1.2000) (1.5400,1.2100) (1.5600,1.2200) (1.5800,1.2300) (1.6000,1.2400) (1.6200,1.2500) (1.6400,1.2600) (1.6600,1.2700) (1.6800,1.2800) (1.7000,1.2900) (1.7200,1.3000) (1.7400,1.3100) (1.7600,1.3200) (1.7800,1.3300) (1.8000,1.3400) (1.8200,1.3500) (1.8400,1.3600) (1.8600,1.3700) (1.8800,1.3800) (1.9000,1.3900) (1.9200,1.4000) (1.9400,1.4100) (1.9600,1.4200) (1.9800,1.4300) (2.0000,1.4400) (2.0200,1.4500) (2.0400,1.4600) (2.0600,1.4700) (2.0800,1.4800) (2.1000,1.4900) (2.1200,1.5000) (2.1400,1.5100) (2.1600,1.5200) (2.1800,1.5300) (2.2000,1.5400)};
\draw[gray!60,densely dashed] (1,0)--(1,1.7);
\draw (1,.025)--(1,-.025) node[below] {$1$};
\draw (.025,1)--(-.025,1) node[left] {$1$};
\node[anchor=north east] at (-.02,-.02) {$0$};
\node[geoamber,align=center] at (.67,1.55) {thin\\transition};
\draw[geoamber,->] (.75,1.25)--(.88,.96);
\node at (1.1,-.52) {warping function};
\end{scope}
\begin{scope}[shift={(6.4cm,1.05cm)},x=2.05cm,y=1.02cm]
\fill[geoblue!6] (0,0) ellipse[x radius=.11,y radius=.84];
\draw[geoblue!55,dashed] (0,0) ellipse[x radius=.11,y radius=.84];
\draw[gray!60,->] (-.22,0)--(1.3821,0) node[right] {$z$};
\draw[geoblue,thick] plot coordinates {(0.0000,0.0000) (0.0000,0.0200) (0.0000,0.0400) (0.0000,0.0600) (0.0000,0.0800) (0.0000,0.1000) (0.0000,0.1200) (0.0000,0.1400) (0.0000,0.1600) (0.0000,0.1800) (0.0000,0.2000) (0.0000,0.2200) (0.0000,0.2400) (0.0000,0.2600) (0.0000,0.2800) (0.0000,0.3000) (0.0000,0.3200) (0.0000,0.3400) (0.0000,0.3600) (0.0000,0.3800) (0.0000,0.4000) (0.0000,0.4200) (0.0000,0.4400) (0.0000,0.4600) (0.0000,0.4800) (0.0000,0.5000) (0.0000,0.5200) (0.0000,0.5400) (0.0000,0.5600) (0.0000,0.5800) (0.0000,0.6000) (0.0000,0.6200) (0.0000,0.6400) (0.0000,0.6600) (0.0000,0.6800) (0.0000,0.7000) (0.0000,0.7200) (0.0000,0.7400) (0.0000,0.7600) (0.0000,0.7800) (0.0000,0.8000) (0.0000,0.8200) (0.0000,0.8250) (0.0000,0.8275) (0.0000,0.8300) (0.0000,0.8325) (0.0000,0.8350) (0.0000,0.8375) (0.0000,0.8400) (0.0000,0.8425) (0.0000,0.8450) (0.0000,0.8475) (0.0001,0.8500) (0.0002,0.8525) (0.0004,0.8550) (0.0008,0.8575) (0.0014,0.8599) (0.0021,0.8623) (0.0030,0.8646) (0.0040,0.8669) (0.0051,0.8691) (0.0064,0.8713) (0.0078,0.8733) (0.0093,0.8753) (0.0109,0.8772) (0.0126,0.8791) (0.0144,0.8808) (0.0163,0.8825) (0.0182,0.8841) (0.0202,0.8857) (0.0222,0.8871) (0.0242,0.8885) (0.0263,0.8899) (0.0285,0.8912) (0.0306,0.8925) (0.0328,0.8937) (0.0349,0.8950) (0.0371,0.8962) (0.0393,0.8975) (0.0414,0.8987) (0.0436,0.9000) (0.0458,0.9012) (0.0479,0.9025) (0.0501,0.9037) (0.0523,0.9050) (0.0544,0.9062) (0.0566,0.9075) (0.0588,0.9087) (0.0609,0.9100) (0.0631,0.9112) (0.0653,0.9125) (0.0674,0.9137) (0.0782,0.9200) (0.0956,0.9300) (0.1129,0.9400) (0.1302,0.9500) (0.1475,0.9600) (0.1648,0.9700) (0.1822,0.9800) (0.1995,0.9900) (0.2168,1.0000) (0.2341,1.0100) (0.2515,1.0200) (0.2688,1.0300) (0.2861,1.0400) (0.3034,1.0500) (0.3207,1.0600) (0.3381,1.0700) (0.3554,1.0800) (0.3727,1.0900) (0.3900,1.1000) (0.4073,1.1100) (0.4247,1.1200) (0.4420,1.1300) (0.4593,1.1400) (0.4766,1.1500) (0.4939,1.1600) (0.5113,1.1700) (0.5286,1.1800) (0.5459,1.1900) (0.5632,1.2000) (0.5805,1.2100) (0.5979,1.2200) (0.6152,1.2300) (0.6325,1.2400) (0.6498,1.2500) (0.6671,1.2600) (0.6845,1.2700) (0.7018,1.2800) (0.7191,1.2900) (0.7364,1.3000) (0.7537,1.3100) (0.7711,1.3200) (0.7884,1.3300) (0.8057,1.3400) (0.8230,1.3500) (0.8403,1.3600) (0.8577,1.3700) (0.8750,1.3800) (0.8923,1.3900) (0.9096,1.4000) (0.9270,1.4100) (0.9443,1.4200) (0.9616,1.4300) (0.9789,1.4400) (0.9962,1.4500) (1.0136,1.4600) (1.0309,1.4700) (1.0482,1.4800) (1.0655,1.4900) (1.0828,1.5000) (1.1002,1.5100) (1.1175,1.5200) (1.1348,1.5300) (1.1521,1.5400)};
\draw[geoblue,thick] plot coordinates {(0.0000,-0.0000) (0.0000,-0.0200) (0.0000,-0.0400) (0.0000,-0.0600) (0.0000,-0.0800) (0.0000,-0.1000) (0.0000,-0.1200) (0.0000,-0.1400) (0.0000,-0.1600) (0.0000,-0.1800) (0.0000,-0.2000) (0.0000,-0.2200) (0.0000,-0.2400) (0.0000,-0.2600) (0.0000,-0.2800) (0.0000,-0.3000) (0.0000,-0.3200) (0.0000,-0.3400) (0.0000,-0.3600) (0.0000,-0.3800) (0.0000,-0.4000) (0.0000,-0.4200) (0.0000,-0.4400) (0.0000,-0.4600) (0.0000,-0.4800) (0.0000,-0.5000) (0.0000,-0.5200) (0.0000,-0.5400) (0.0000,-0.5600) (0.0000,-0.5800) (0.0000,-0.6000) (0.0000,-0.6200) (0.0000,-0.6400) (0.0000,-0.6600) (0.0000,-0.6800) (0.0000,-0.7000) (0.0000,-0.7200) (0.0000,-0.7400) (0.0000,-0.7600) (0.0000,-0.7800) (0.0000,-0.8000) (0.0000,-0.8200) (0.0000,-0.8250) (0.0000,-0.8275) (0.0000,-0.8300) (0.0000,-0.8325) (0.0000,-0.8350) (0.0000,-0.8375) (0.0000,-0.8400) (0.0000,-0.8425) (0.0000,-0.8450) (0.0000,-0.8475) (0.0001,-0.8500) (0.0002,-0.8525) (0.0004,-0.8550) (0.0008,-0.8575) (0.0014,-0.8599) (0.0021,-0.8623) (0.0030,-0.8646) (0.0040,-0.8669) (0.0051,-0.8691) (0.0064,-0.8713) (0.0078,-0.8733) (0.0093,-0.8753) (0.0109,-0.8772) (0.0126,-0.8791) (0.0144,-0.8808) (0.0163,-0.8825) (0.0182,-0.8841) (0.0202,-0.8857) (0.0222,-0.8871) (0.0242,-0.8885) (0.0263,-0.8899) (0.0285,-0.8912) (0.0306,-0.8925) (0.0328,-0.8937) (0.0349,-0.8950) (0.0371,-0.8962) (0.0393,-0.8975) (0.0414,-0.8987) (0.0436,-0.9000) (0.0458,-0.9012) (0.0479,-0.9025) (0.0501,-0.9037) (0.0523,-0.9050) (0.0544,-0.9062) (0.0566,-0.9075) (0.0588,-0.9087) (0.0609,-0.9100) (0.0631,-0.9112) (0.0653,-0.9125) (0.0674,-0.9137) (0.0782,-0.9200) (0.0956,-0.9300) (0.1129,-0.9400) (0.1302,-0.9500) (0.1475,-0.9600) (0.1648,-0.9700) (0.1822,-0.9800) (0.1995,-0.9900) (0.2168,-1.0000) (0.2341,-1.0100) (0.2515,-1.0200) (0.2688,-1.0300) (0.2861,-1.0400) (0.3034,-1.0500) (0.3207,-1.0600) (0.3381,-1.0700) (0.3554,-1.0800) (0.3727,-1.0900) (0.3900,-1.1000) (0.4073,-1.1100) (0.4247,-1.1200) (0.4420,-1.1300) (0.4593,-1.1400) (0.4766,-1.1500) (0.4939,-1.1600) (0.5113,-1.1700) (0.5286,-1.1800) (0.5459,-1.1900) (0.5632,-1.2000) (0.5805,-1.2100) (0.5979,-1.2200) (0.6152,-1.2300) (0.6325,-1.2400) (0.6498,-1.2500) (0.6671,-1.2600) (0.6845,-1.2700) (0.7018,-1.2800) (0.7191,-1.2900) (0.7364,-1.3000) (0.7537,-1.3100) (0.7711,-1.3200) (0.7884,-1.3300) (0.8057,-1.3400) (0.8230,-1.3500) (0.8403,-1.3600) (0.8577,-1.3700) (0.8750,-1.3800) (0.8923,-1.3900) (0.9096,-1.4000) (0.9270,-1.4100) (0.9443,-1.4200) (0.9616,-1.4300) (0.9789,-1.4400) (0.9962,-1.4500) (1.0136,-1.4600) (1.0309,-1.4700) (1.0482,-1.4800) (1.0655,-1.4900) (1.0828,-1.5000) (1.1002,-1.5100) (1.1175,-1.5200) (1.1348,-1.5300) (1.1521,-1.5400)};
\draw[geoamber,line width=2pt] plot coordinates {(0.0000,0.8400) (0.0000,0.8420) (0.0000,0.8440) (0.0000,0.8460) (0.0000,0.8480) (0.0001,0.8500) (0.0002,0.8520) (0.0003,0.8540) (0.0006,0.8560) (0.0009,0.8579) (0.0014,0.8599) (0.0019,0.8618) (0.0026,0.8637) (0.0034,0.8655) (0.0042,0.8674) (0.0051,0.8691) (0.0062,0.8708) (0.0073,0.8725) (0.0084,0.8741) (0.0097,0.8757) (0.0109,0.8772) (0.0123,0.8787) (0.0137,0.8801) (0.0152,0.8815) (0.0166,0.8828) (0.0182,0.8841) (0.0198,0.8854) (0.0214,0.8865) (0.0230,0.8877) (0.0247,0.8888) (0.0263,0.8899) (0.0281,0.8909) (0.0298,0.8920) (0.0315,0.8930) (0.0332,0.8940) (0.0349,0.8950) (0.0367,0.8960) (0.0384,0.8970) (0.0401,0.8980) (0.0419,0.8990) (0.0436,0.9000)};
\draw[geoamber,line width=2pt] plot coordinates {(0.0000,-0.8400) (0.0000,-0.8420) (0.0000,-0.8440) (0.0000,-0.8460) (0.0000,-0.8480) (0.0001,-0.8500) (0.0002,-0.8520) (0.0003,-0.8540) (0.0006,-0.8560) (0.0009,-0.8579) (0.0014,-0.8599) (0.0019,-0.8618) (0.0026,-0.8637) (0.0034,-0.8655) (0.0042,-0.8674) (0.0051,-0.8691) (0.0062,-0.8708) (0.0073,-0.8725) (0.0084,-0.8741) (0.0097,-0.8757) (0.0109,-0.8772) (0.0123,-0.8787) (0.0137,-0.8801) (0.0152,-0.8815) (0.0166,-0.8828) (0.0182,-0.8841) (0.0198,-0.8854) (0.0214,-0.8865) (0.0230,-0.8877) (0.0247,-0.8888) (0.0263,-0.8899) (0.0281,-0.8909) (0.0298,-0.8920) (0.0315,-0.8930) (0.0332,-0.8940) (0.0349,-0.8950) (0.0367,-0.8960) (0.0384,-0.8970) (0.0401,-0.8980) (0.0419,-0.8990) (0.0436,-0.9000)};
\draw[geoblue!60,densely dashed] (0.1129,0) ellipse[x radius=0.1034,y radius=0.9400];
\draw[geoblue!65] (1.1521,0) ellipse[x radius=0.1694,y radius=1.5400];
\fill (0,0) circle[radius=1.4pt] node[below left=2pt] {$p$};
\node[align=center] at (-.31,1.31) {flat\\cap};
\draw[gray!70,->] (-.23,1.02)--(-.01,.52);
\node at (0.7143,1.8800) {conical end};
\node at (0.5185,-1.95) {meridian model};
\end{scope}
\node[anchor=west] at (0,3.27) {$v=1/4$, $a=1/2$};
\end{tikzpicture}
\caption{The same construction with $n=3$, $r=1$, and $v=1/4$.
Here $f_\varepsilon'=a=1/2$ beyond the transition, so the end
is conical. The dashed limit on the left is $t$ for $t\le1$
and $1+(t-1)/2$ for $t\ge1$. The transition stays inside the
unit ball and $I_p(1)\to8\pi$.}
\label{fig:rot-quarter}
\end{figure}

\FloatBarrier

\section{Balls centered at a pole}
\label{sec:pole}

\subsection{The polar volume density}
\label{sec:polar}

Throughout this section, $(M^3,g)$ has a pole $p$ and $\Ric\ge0$.
Put $\rho(x)=d(p,x)$. In polar coordinates $(t,\theta)$ at $p$,
write
\begin{equation}\label{eq:polar-volume}
 \dd\mu_g=J(t,\theta)\dd t\dd\theta,
 \qquad j(t,\theta)=\sqrt{J(t,\theta)},
 \qquad \mathcal A(t)=\Vol(\partial B_p(t)),
\end{equation}
where $\dd\theta$ is the standard area element on $\Sph^2$.
Let $\lambda_1,\lambda_2$ be the principal curvatures of
$\partial B_p(t)$ with respect to $\nabla\rho$, and let
$H=\lambda_1+\lambda_2$. Primes denote derivatives in $t$
with $\theta$ fixed. The Riccati equation gives
\begin{equation}\label{eq:riccati}
 \frac{J'}J=H,\qquad
 H'=-(\lambda_1^2+\lambda_2^2)-\Ric(\nabla\rho,\nabla\rho).
\end{equation}

The estimate $0\le H\le2/t$ below is
\cite[Lemma~3.1]{Xu24}. We recall the argument in terms of
$j=\sqrt J$, which also gives the bounds used later.

\begin{lemma}\label{lem:polar-concavity}
For every $t>0$ and $\theta\in\Sph^2$,
\begin{equation}\label{eq:j-bounds}
 j''\le0,\qquad 0\le j'\le1,\qquad
 0<j(t,\theta)\le t,\qquad 0\le H\le\frac2t.
\end{equation}
\end{lemma}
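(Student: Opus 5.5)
We work along a fixed geodesic ray $t\mapsto\exp_p(t\theta)$. Since $p$ is a pole, $\rho$ is smooth on $M\setminus\{p\}$ and $J(t,\theta)>0$ for all $t>0$. Moreover $J(t,\theta)=t^2+O(t^3)$ as $t\downarrow0$. Hence $j=\sqrt J$ is smooth and positive for $t>0$, and it extends with $j(0)=0$, $j'(0)=1$.

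The first step is the concavity of $j$. From $J'/J=H$ we get $j'=\tfrac12Hj$. Differentiating and using the Riccati equation \eqref{eq:riccati} gives
\[
 \frac{j''}{j}=\tfrac12H'+\tfrac14H^2
 =-(\lambda_1^2+\lambda_2^2)+\tfrac14(\lambda_1+\lambda_2)^2
  -\tfrac12\Ric(\nabla\rho,\nabla\rho).
\]
By Cauchy--Schwarz, $(\lambda_1+\lambda_2)^2\le2(\lambda_1^2+\lambda_2^2)$. Together with $\Ric\ge0$, the right side is at most $-\tfrac12(\lambda_1^2+\lambda_2^2)\le0$. Thus $j''\le0$. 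This is the place where dimension three enters, since we use $\sqrt J$, i.e.\ the power $1/(n-1)$ with $n=3$.

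Next we bound $j'$. Since $j''\le0$, $j'$ is nonincreasing, so $j'\le j'(0^+)=1$. For the lower bound, suppose $j'(t_0)<0$ at some $t_0$. Concavity then forces $j(t)\le j(t_0)+j'(t_0)(t-t_0)$, which becomes zero at a finite $t$. This contradicts $J>0$ along the ray, which holds because $p$ is a pole and there are no conjugate points. Hence $0\le j'\le1$. Integrating from $0$ gives $0<j(t)\le t$.

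Finally, $H=2j'/j\ge0$. Concavity with $j(0)=0$ gives $j(t)\ge t\,j'(t)$, since $j(t)=\int_0^t j'\ge t\,j'(t)$ because $j'$ is nonincreasing. Therefore $H=2j'/j\le2/t$.

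The only delicate point is the behavior at $t=0$: we need $j'(0^+)=1$ and $j(0)=0$. These follow from the normal-coordinate expansion $J=t^2(1+O(t^2))$, which gives $j=t+O(t^3)$. The pole assumption ensures that everything is smooth for $t>0$, so there is no cut locus to handle.
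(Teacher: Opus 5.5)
Your route is the same as the paper's: concavity of $j=\sqrt J$ from the Riccati equation, then $0\le j'\le1$ from $j'(0)=1$ and positivity of $J$ along rays from a pole, then $j\ge tj'$.

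There is, however, a factor-of-two slip in your displayed identity. The term $\tfrac12H'$ contributes $-\tfrac12(\lambda_1^2+\lambda_2^2)$, not $-(\lambda_1^2+\lambda_2^2)$. As a result, your claimed bound $j''/j\le-\tfrac12(\lambda_1^2+\lambda_2^2)$ is false: in $\R^3$ one has $\lambda_1=\lambda_2=1/t$, but $j=t$ and $j''=0$.

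With the correct coefficient, the right side equals $-\tfrac14(\lambda_1-\lambda_2)^2-\tfrac12\Ric(\nabla\rho,\nabla\rho)$, which is exactly the paper's identity. Your Cauchy--Schwarz step then gives precisely $j''/j\le-\tfrac12\Ric(\nabla\rho,\nabla\rho)\le0$, and the rest of your argument goes through unchanged.
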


\begin{proof}
From \eqref{eq:riccati}, we have
\[
 \frac{j''}j
 =-\frac12\Ric(\nabla\rho,\nabla\rho)
   -\frac14(\lambda_1-\lambda_2)^2\le0.
\]
Normal coordinates give $j(0,\theta)=0$ and $j'(0,\theta)=1$.
Since $p$ is a pole, $j(t,\theta)>0$ for every $t>0$.
Concavity therefore implies $j'\ge0$, as in
Lemma~\ref{lem:rot-concavity}. Thus $0\le j'\le1$ and $j\le t$.
Finally, $j(t,\theta)\ge t j'(t,\theta)$, so
$0\le H=2j'/j\le2/t$.
\end{proof}

\begin{lemma}\label{lem:polar-integral}
For every $r>0$,
\begin{equation}\label{eq:polar-integral}
 \int_{B_p(r)}S\dd\mu_g
 \le8\pi r+2\int_{\Sph^2}\int_0^r j'(t,\theta)^2
                        \dd t\dd\theta-2\mathcal A'(r).
\end{equation}
\end{lemma}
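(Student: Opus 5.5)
The plan is to slice $B_p(r)$ by the distance spheres $\Sigma_t=\partial B_p(t)$ and rewrite $S$ using the Gauss equation and the Riccati equation \eqref{eq:riccati}. This splits the scalar curvature into three pieces: an intrinsic Gauss-curvature piece, which Gauss--Bonnet turns into $8\pi r$; a total $t$-derivative piece, which produces $-2\mathcal A'(r)$; and a leftover quadratic piece in the principal curvatures, which is bounded by $2j'^2$. Since $p$ is a pole, $\rho$ is smooth on $M\setminus\{p\}$ and each $\Sigma_t$ is a smooth embedded sphere. So all of this is legitimate on $(0,r]$, and we only need to control the behaviour as $t\to0$.

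\emph{Step 1: pointwise identity.} Write $\nu=\nabla\rho$, let $K_\Sigma$ be the Gauss curvature of $\Sigma_t$, and put $|A|^2=\lambda_1^2+\lambda_2^2$. The traced Gauss equation gives
\[
 S=2\Ric(\nu,\nu)+2K_\Sigma-(H^2-|A|^2)=2\Ric(\nu,\nu)+2K_\Sigma-2\lambda_1\lambda_2 .
\]
Substituting $\Ric(\nu,\nu)=-H'-|A|^2$ from \eqref{eq:riccati} yields
\[
 S=-2H'+2K_\Sigma-2|A|^2-2\lambda_1\lambda_2 .
\]

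\emph{Step 2: integration.} Integrate against $\dd\mu_g=J\dd t\dd\theta$ over $0<t<r$.
\begin{itemize}
\item For the $K_\Sigma$ term, Gauss--Bonnet on each sphere gives $\int_{\Sigma_t}2K_\Sigma=8\pi$, hence a contribution of $8\pi r$.
\item For the $-2H'J$ term, integrate by parts in $t$ at fixed $\theta$. Using $J'=HJ$ and $\mathcal A'(t)=\int_{\Sph^2}HJ\dd\theta$, we get $-2\mathcal A'(r)+2\int\!\!\int H^2J$. The boundary term at $t=0$ vanishes because $0\le HJ\le 2j^2/t\le 2t$ by Lemma~\ref{lem:polar-concavity}.
\end{itemize}
Collecting terms, the integrand that remains is
\[
 2H^2-2|A|^2-2\lambda_1\lambda_2=H^2-|A|^2+\bigl(H^2-|A|^2-2\lambda_1\lambda_2\bigr)=2\lambda_1\lambda_2 ,
\]
since $H^2-|A|^2=2\lambda_1\lambda_2$. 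By AM--GM, $2\lambda_1\lambda_2\le H^2/2$.

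\emph{Step 3: conversion to $j$.} Since $H=J'/J=2j'/j$, we have
\[
 \tfrac12H^2J=\tfrac12\cdot 4\frac{j'^2}{j^2}\cdot j^2=2j'^2 .
\]
Therefore
\[
 \int_{B_p(r)}S\dd\mu_g\le 8\pi r+2\int_{\Sph^2}\int_0^r j'^2\dd t\dd\theta-2\mathcal A'(r),
\]
which is \eqref{eq:polar-integral}.

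The only delicate point is justifying the integration by parts and Fubini near $t=0$. Lemma~\ref{lem:polar-concavity} gives $0\le j'\le1$ and $H^2J=4j'^2\le4$, so every integrand is bounded. One can first integrate over $\epsilon<t<r$ and then let $\epsilon\to0$.
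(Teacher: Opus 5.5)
Your proof is correct and follows essentially the same route as the paper: the Gauss equation, the Riccati equation, $\lambda_1\lambda_2\le H^2/4=(j')^2/J$, Gauss--Bonnet on each $\partial B_p(t)$, and an $\varepsilon\downarrow0$ limit with the boundary term controlled by $0\le HJ\le 2t$. The only cosmetic difference is that you integrate $-2H'J$ by parts, whereas the paper packages the same terms as $-2J''/J$ and integrates $J''$ directly; these agree because $-2H'J=-2J''+2H^2J$.
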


\begin{proof}
Let $S_{\partial B_p(t)}$ denote the intrinsic scalar curvature
of $\partial B_p(t)$. The Gauss equation and
\eqref{eq:riccati} give
\[
 \begin{split}
 S&=S_{\partial B_p(t)}+2\Ric(\nabla\rho,\nabla\rho)
                                      -2\lambda_1\lambda_2,\\
 \frac{J''}J&=2\lambda_1\lambda_2-\Ric(\nabla\rho,\nabla\rho).
 \end{split}
\]
Consequently,
\[
 S=S_{\partial B_p(t)}+2\lambda_1\lambda_2-2\frac{J''}J
   \le S_{\partial B_p(t)}+2\frac{(j')^2}{J}-2\frac{J''}J.
\]
Here we used $\lambda_1\lambda_2\le H^2/4=(j')^2/J$.
Under the polar parametrization of $\partial B_p(t)$, the induced
area element is
\[
\dd\sigma_t=J(t,\theta)\dd\theta.
\]
Each $\partial B_p(t)$ is diffeomorphic to $\Sph^2$, and hence
\[
\int_{\partial B_p(t)}S_{\partial B_p(t)}\dd\sigma_t=8\pi
\]
by Gauss--Bonnet, and we have
\[
\mathcal A(t)
=\int_{\Sph^2}J(t,\theta)\dd\theta
=\int_{\Sph^2}j(t,\theta)^2\dd\theta.
\]
Since $p$ is a pole, $J$ is smooth for $t>0$, and differentiation
under the integral gives
\[
\mathcal A'(t)
=\int_{\Sph^2}J'(t,\theta)\dd\theta
=2\int_{\Sph^2}j(t,\theta)j'(t,\theta)\dd\theta.
\]
In particular, for $0<\varepsilon<r$,
\[
\int_{\Sph^2}\int_\varepsilon^r
J''(t,\theta)\dd t\dd\theta
=\mathcal A'(r)-\mathcal A'(\varepsilon).
\]
 Integrating over
$B_p(r)\setminus\overline{B_p(\varepsilon)}$ gives
\[
 \int_{B_p(r)\setminus\overline{B_p(\varepsilon)}}S\dd\mu_g
 \le8\pi(r-\varepsilon)
 +2\int_{\Sph^2}\int_\varepsilon^r(j')^2\dd t\dd\theta
 -2\mathcal A'(r)+2\mathcal A'(\varepsilon).
\]
By Lemma~\ref{lem:polar-concavity},
$0\le\mathcal A'(\varepsilon)
=2\int_{\Sph^2}jj'\dd\theta\le8\pi\varepsilon$.
Letting $\varepsilon\downarrow0$ proves the claim.
\end{proof}

This gives a bound without a volume growth
assumption. From $0\le j'\le1$ and $\mathcal A'\ge0$, we get 
\begin{equation}\label{eq:pole-uniform}
 I_p(r)\le8\pi r+2\int_{\Sph^2}\int_0^r1\dd t\dd\theta
           =16\pi r.
\end{equation}

\subsection{The estimate with positive volume ratio}
\label{sec:pole-bound}

We use the following Minkowski inequality. 

\begin{definition}
For a bounded set $E\subset M$, let $P(E)=|D\chi_E|(M)$ denotes the perimeter of $E$.
For a set of finite perimeter,  
$P(E)=\mathcal H_g^{n-1}(\partial^*E)$, where $\partial^*E$
is its reduced boundary. In particular,
$P(\Omega)=|\partial\Omega|$ when $\Omega$ has smooth boundary. A bounded domain $\Omega$ is
\emph{outward minimizing} if $P(\Omega)\le P(E)$ for every
bounded set $E$ of finite perimeter containing $\Omega$ up to
a set of measure zero.
\end{definition}

\begin{quotedtheorem}[Benatti--Fogagnolo--Mazzieri,
\cite{BFM24}, Corollary~4.6]
Suppose $(N^n,h)$, $n\ge3$, is complete, has $\Ric_h\ge0$,
and satisfies $V_{N^n}>0$. If $\Omega\subset N$ is a bounded
outward minimizing domain with smooth boundary, then
\begin{equation}\label{eq:bfm-minkowski}
 V_{N^n}^{1/(n-1)}
 \left(\frac{|\partial\Omega|}{n\omega_n}\right)^{(n-2)/(n-1)}
 \le\frac1{n(n-1)\omega_n}
                  \int_{\partial\Omega}H\dd\sigma.
\end{equation}
Here $|\partial\Omega|$ is the boundary area and $H$ is the
sum of its principal curvatures with respect to the outward
unit normal.
\end{quotedtheorem}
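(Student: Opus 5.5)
This is the cited Minkowski inequality of Benatti--Fogagnolo--Mazzieri, and the paper uses it as a black box. If I had to prove it, I would argue by monotonicity along the weak inverse mean curvature flow of Huisken--Ilmanen, started from $\Omega$. Since $\Omega$ is outward minimizing with smooth boundary, the weak flow starts from $\partial\Omega$ itself, with no initial jump. The flow produces an exhausting family $\Omega_t$ with $|\partial\Omega_t|=e^t|\partial\Omega|$. Its boundaries are $C^{1,\alpha}$ with weak mean curvature $H\ge0$ in $L^2$. Consider
\[
 Q(t)=|\partial\Omega_t|^{-(n-2)/(n-1)}\int_{\partial\Omega_t}H\dd\sigma .
\]
The claimed inequality is exactly $Q(0)\ge(n-1)(n\omega_n)^{1/(n-1)}V_{N^n}^{1/(n-1)}$. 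So the proof has two parts: show $Q$ is nonincreasing, and compute a lower bound for $\liminf_{t\to\infty}Q(t)$.

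\textbf{Monotonicity.} Along a smooth flow with normal speed $1/H$, the first variation gives
\[
 \frac{\mathrm d}{\mathrm dt}\int H\dd\sigma
 =\int\Bigl(H-\frac{|A|^2+\Ric(\nu,\nu)}{H}\Bigr)\dd\sigma
 -\int\Delta\frac1H\dd\sigma .
\]
The Laplacian term integrates to zero. The bounds $\Ric\ge0$ and $|A|^2\ge H^2/(n-1)$ then yield
\[
 \frac{\mathrm d}{\mathrm dt}\int H\dd\sigma\le\frac{n-2}{n-1}\int H\dd\sigma .
\]
Since the area is $e^t$ times its initial value, this gives $Q'\le0$.

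In the weak setting I would carry out the same computation in integrated form, using the level-set formulation and the approximation by elliptic regularization. At the jump times $\partial\Omega_t$ is replaced by the boundary of its strictly outward minimizing hull. This does not change the area, and it can only decrease $\int H$, because the new pieces are minimal and $H$ is unchanged on the contact set. Hence $Q$ is monotone across jumps as well. The regularity needed to justify this requires $n\le7$. For general $n$, I would instead follow the nonlinear potential theory route: replace the flow by the $p$-capacitary potentials of $\Omega$, which give a family of level-set monotonicity formulas, and let $p\to1^+$.

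\textbf{Asymptotic lower bound (main obstacle).} I expect the hardest step to be showing
\[
 \liminf_{t\to\infty}Q(t)\ge(n-1)(n\omega_n)^{1/(n-1)}V_{N^n}^{1/(n-1)} .
\]
The sets $\Omega_t$ are outward minimizing and exhaust $N$, so I would compare them with the sharp isoperimetric inequality for $\Ric\ge0$ with positive asymptotic volume ratio (Brendle, Agostiniani--Fogagnolo--Mazzieri): $|\partial E|\ge n\omega_n^{1/n}V_{N^n}^{1/n}|E|^{(n-1)/n}$. I would combine this with the fact that
\[
 \frac{\mathrm d}{\mathrm dt}|\Omega_t|=\int_{\partial\Omega_t}\frac1H\dd\sigma .
\]
Then the Cauchy--Schwarz inequality $\int H\int\frac1H\ge|\partial\Omega_t|^2$ turns the growth of the volume into a lower bound for $\int H$ along a sequence of times. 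The first thing I would try is to argue by contradiction. Suppose $Q(t)\le(1-\delta)(n-1)(n\omega_n)^{1/(n-1)}V_{N^n}^{1/(n-1)}$ for all large $t$. Integrating the volume growth would then force $|\Omega_t|$ to exceed what the isoperimetric inequality permits for the known area $e^t|\partial\Omega|$, which is impossible. If this direct argument falls short of the sharp constant, the fallback is a blow-down argument: the rescaled level sets $e^{-t/(n-1)}\partial\Omega_t$ approach cross-sections of the asymptotic cone, whose area ratio encodes $V_{N^n}$.
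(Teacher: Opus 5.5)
The paper gives no proof of this statement. It is imported from \cite{BFM24} and used only for $n=3$, in Lemma~\ref{lem:area-radius}, for the balls $B_p(t)$, which are outward minimizing by Lemma~\ref{lem:outward}. So there is no in-paper argument to compare with. Judged on its own, your outline is a sound route: monotonicity of $Q$ along the weak inverse mean curvature flow, then an isoperimetric lower bound for its limit. Moreover, the step you flag as the main obstacle already closes, with the sharp constant, by your direct argument.

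\textbf{The asymptotic step.} Set $c=(n-1)(n\omega_n)^{1/(n-1)}V_{N^n}^{1/(n-1)}$ and suppose $Q(0)\le(1-\delta)c$ for some $\delta>0$. Monotonicity gives $Q(t)\le(1-\delta)c$ for all $t\ge0$. Using $|\partial\Omega_t|=e^t|\partial\Omega|$,
\[
 \frac{\mathrm d}{\mathrm dt}|\Omega_t|
 \ge\int_{\partial\Omega_t}\frac{\dd\sigma}{H}
 \ge\frac{|\partial\Omega_t|^2}{\int_{\partial\Omega_t}H\dd\sigma}
 \ge\frac{|\partial\Omega|^{n/(n-1)}e^{nt/(n-1)}}{(1-\delta)c}.
\]
Integrating gives $\liminf_{t\to\infty}|\Omega_t|\,|\partial\Omega_t|^{-n/(n-1)}\ge(n-1)/(n(1-\delta)c)$. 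The sharp isoperimetric inequality instead gives, for every $t$,
\[
 \frac{|\Omega_t|}{|\partial\Omega_t|^{n/(n-1)}}
 \le\bigl(n\omega_n^{1/n}V_{N^n}^{1/n}\bigr)^{-n/(n-1)}
 =\frac{n-1}{nc}.
\]
This is a contradiction, so $Q(0)\ge c$, which is the claim.

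You can therefore drop the blow-down fallback, which would be much harder. Tangent cones at infinity need not be unique, their cross-sections may be singular, and total mean curvature is not controlled under Gromov--Hausdorff convergence.

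In the weak setting, read the first inequality in integrated form. Jumps only add volume. With $w$ the weak solution and $H=|\nabla w|$ on almost every level set, the coarea formula gives $|\Omega_t|-|\Omega|\ge\int_0^t\int_{\partial\Omega_s}H^{-1}\dd\sigma\dd s$, which is the direction you need.

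\textbf{Inputs you should name.} The remaining ingredients are standard but not automatic.
\begin{itemize}
\item Existence: Huisken--Ilmanen's existence theorem assumes a proper subsolution outside a compact set. On a general manifold with $\Ric\ge0$ and Euclidean volume growth, a proper weak solution starting from $\Omega$ comes from a separate result, for instance the $p$-Laplacian approximation of Mari--Rigoli--Setti.
\item Initial jump: ``No initial jump'' holds only if $\Omega$ is strictly outward minimizing. Otherwise the flow starts from the strictly outward minimizing hull, which has the same area and no larger $\int H$. Your jump argument therefore also covers $t=0$.
\item Weak monotonicity: this is the technical core. On the elliptic-regularization approximants $H>0$, so the smooth computation is legitimate. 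Passing to the limit, however, requires Huisken--Ilmanen-type convergence and lower semicontinuity arguments.
\item Dimensions $n\ge8$: your branch via $p\to1^+$ is only named, not carried out. This does not matter for the paper, which uses only $n=3$, where the level sets are $C^{1,\alpha}$.
\end{itemize}
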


In dimension three, writing $a=\sqrt{V_{M^3}}$, this becomes
\begin{equation}\label{eq:bfm-three}
 a\sqrt{\frac{|\partial\Omega|}{4\pi}}
       \le\frac1{8\pi}\int_{\partial\Omega}H\dd\sigma.
\end{equation}
For the rest of this subsection, assume $V_{M^3}>0$.
We first check outward minimality of the balls centered at $p$.

\begin{lemma}\label{lem:outward}
Every ball $B_p(r)$ is outward minimizing: each bounded set $E$
of finite perimeter containing $B_p(r)$, up to a null set,
satisfies $P(E)\ge P(B_p(r))=\mathcal A(r)$.
\end{lemma}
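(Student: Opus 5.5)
Since $\rho$ has mean-convex level sets ($H\ge0$ by Lemma~\ref{lem:polar-concavity}) and $p$ is a pole, I will run a calibration argument with the unit vector field $X=\nabla\rho$ on $M\setminus\{p\}$. It is smooth there, and $\diver X=H\ge0$ in the distributional sense on all of $M$. Near $p$ we have $H\sim 2/\rho$, which is integrable in dimension three, and the flux of $X$ through small spheres $\partial B_p(\varepsilon)$ equals $\mathcal A(\varepsilon)\to0$. So there is no singular contribution at $p$, and $\int_\Omega\diver X\,\varphi$ behaves as for a smooth field.

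Let $E\supset B_p(r)$ (up to a null set) be bounded with finite perimeter, and fix a large $R$ with $E\subset B_p(R)$. I will apply the divergence theorem for sets of finite perimeter (De Giorgi--Federer) to $X$ on the difference $E\setminus \overline{B_p(r)}$. Its reduced boundary lies, up to $\mathcal H^2$-null sets, in $\partial^*E\setminus\overline{B_p(r)}$ together with a subset of $\partial B_p(r)$. Along the part of $\partial B_p(r)$ contained in the interior of $E$, the outer normal is $-\nabla\rho$. This gives
\[
 0\le\int_{E\setminus B_p(r)}H\dd\mu_g
 =\int_{\partial^*E\setminus \overline{B_p(r)}}\ip{X}{\nu_E}\dd\mathcal H^2
 -\mathcal H^2\bigl(\partial B_p(r)\cap E^{(1)}\bigr),
\]
where $E^{(1)}$ denotes the set of density-one points of $E$.

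Since $|X|=1$, the first term is at most $\mathcal H^2(\partial^*E\setminus\overline{B_p(r)})$. The portion of $\partial B_p(r)$ not in $E^{(1)}$ must lie in $\partial^*E$ (up to null sets), because $E\supset B_p(r)$. Combining the two pieces gives $P(E)\ge\mathcal A(r)$.

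The main obstacle is the bookkeeping of how $\partial^*E$ meets the sphere $\partial B_p(r)$. To sidestep it I would do the following. Apply the divergence theorem instead on $E\setminus\overline{B_p(s)}$ for $s<r$: since $B_p(r)\setminus B_p(s)\subset E$, the reduced boundary of this set near $\partial B_p(s)$ is exactly $\partial B_p(s)$, so
\[
P(E)\ge\mathcal A(s)+\int_{E\setminus B_p(s)}H\ge\mathcal A(s).
\]
Letting $s\uparrow r$ and using the continuity of $\mathcal A$ then finishes the proof. This is also legitimate: the divergence theorem on the finite-perimeter set $E\cap(M\setminus\overline{B_p(s)})$ only needs $X$ Lipschitz near its closure, which holds away from $p$. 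Equality $P(B_p(r))=\mathcal A(r)$ holds because $\partial B_p(r)$ is smooth, since $p$ is a pole.
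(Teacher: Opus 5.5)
Your final argument is correct and is essentially the paper's proof. Both calibrate with the unit field $\nabla\rho$, whose divergence $H$ is nonnegative away from the pole, and apply Gauss--Green to $E$ minus a closed ball about $p$ contained in $B_p(r)$. The difference is cosmetic: the paper removes a small $B_p(\varepsilon)$ and keeps $\int_{E\setminus B_p(\varepsilon)}H\dd\mu_g\ge\int_{B_p(r)\setminus B_p(\varepsilon)}H\dd\mu_g=\mathcal A(r)-\mathcal A(\varepsilon)$ to reach $\mathcal A(r)$ exactly, whereas you drop that term and let $s\uparrow r$ using continuity of $\mathcal A$.
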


\begin{proof}
The vector field $\nabla\rho$ has length one and
$\diver(\nabla\rho)=H\ge0$ away from $p$.
For $0<\varepsilon<r$, the Gauss--Green formula gives
\[
 \begin{split}
 P(E)&\ge\int_{\partial^*E}
             \langle\nabla\rho,\nu_E\rangle\dd\sigma\\
     &=\mathcal A(\varepsilon)
             +\int_{E\setminus B_p(\varepsilon)}H\dd\mu_g\\
     &\ge\mathcal A(\varepsilon)
             +\int_{B_p(r)\setminus B_p(\varepsilon)}H\dd\mu_g
       =\mathcal A(r).
 \end{split}
\]
The field is smooth outside $B_p(\varepsilon)$; a compactly
supported cutoff equal to one near $E$ justifies its use in
Gauss--Green.
\end{proof}

\begin{lemma}\label{lem:area-radius}
Set
\begin{equation}\label{eq:area-radius}
 \eta(t)=\sqrt{\frac{\mathcal A(t)}{4\pi}}.
\end{equation}
Then, for every $t>0$,
\begin{equation}\label{eq:area-bounds}
 \eta'(t)\ge a,\qquad at\le\eta(t)\le t,
 \qquad \mathcal A'(t)\ge8\pi a\eta(t).
\end{equation}
\end{lemma}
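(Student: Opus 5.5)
The plan is to obtain all three inequalities from two inputs: the Minkowski inequality \eqref{eq:bfm-three} applied to the balls $\Omega=B_p(t)$, and the small-radius behaviour $\eta(t)\sim t$. Lemma~\ref{lem:outward} shows that every $B_p(t)$ is outward minimizing. Its boundary $\partial B_p(t)=\exp_p(\text{sphere})$ is smooth because $p$ is a pole. So \eqref{eq:bfm-three} applies and gives
\[
 a\,\eta(t)=a\sqrt{\frac{\mathcal A(t)}{4\pi}}\le\frac1{8\pi}\int_{\partial B_p(t)}H\dd\sigma_t .
\]
By \eqref{eq:riccati}, $\int_{\partial B_p(t)}H\dd\sigma_t=\int_{\Sph^2}J'(t,\theta)\dd\theta=\mathcal A'(t)$. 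Hence $\mathcal A'(t)\ge 8\pi a\,\eta(t)$, which is the third inequality.

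Next I convert this into the derivative bound. Since $\mathcal A=4\pi\eta^2$, we have $\mathcal A'=8\pi\eta\eta'$, and $\eta>0$ for $t>0$. Dividing gives $\eta'(t)\ge a$. For the two-sided bound, I use Lemma~\ref{lem:polar-concavity}: there $j(t,\theta)\le t$, so $\mathcal A(t)=\int_{\Sph^2}j^2\dd\theta\le4\pi t^2$, and therefore $\eta\le t$. For the lower bound, normal coordinates give $\mathcal A(t)=4\pi t^2(1+o(1))$, so $\eta(t)\to0$ as $t\downarrow0$. Integrating $\eta'\ge a$ from $0$ to $t$ then yields $\eta(t)\ge at$.

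The main point to check is that \eqref{eq:bfm-three} really applies: the domain must have smooth boundary and be outward minimizing, which is exactly what the pole assumption together with Lemma~\ref{lem:outward} supplies. The standing assumption $V_{M^3}>0$ is also required, but the statement is trivial when $a=0$. A minor technical point is the differentiability of $\eta$, which holds because $\mathcal A$ is smooth for $t>0$.
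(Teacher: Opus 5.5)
Your proposal is correct and follows the paper's proof essentially step for step. You apply \eqref{eq:bfm-three} to the outward-minimizing balls $B_p(t)$ and use $\mathcal A'=\int_{\partial B_p(t)}H\dd\sigma_t=8\pi\eta\eta'$ to get $\eta'\ge a$. You then integrate from zero, using $\eta(0^+)=0$, and take the upper bound from $j\le t$. Your remarks on smoothness of $\partial B_p(t)$ and on the trivial case $a=0$ (where $H\ge0$ suffices) make explicit points the paper leaves implicit.
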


\begin{proof}
Apply \eqref{eq:bfm-three} to $B_p(t)$, using
Lemma~\ref{lem:outward} and
$\mathcal A'(t)=\int_{\partial B_p(t)}H\dd\sigma_t$. We obtain
\[
 a\eta(t)\le\frac{\mathcal A'(t)}{8\pi}
                         =\eta(t)\eta'(t).
\]
Since $\eta(t)>0$, it follows that $\eta'\ge a$.
Integrating from zero gives $\eta(t)\ge at$. The upper bound
follows from $J(t,\theta)=j(t,\theta)^2\le t^2$.
\end{proof}

\begin{lemma}\label{lem:linear-area}
For every $r>0$,
\begin{equation}\label{eq:linear-area}
 \int_{B_p(r)}S\dd\mu_g
       \le8\pi\bigl[r+(1-2a)\eta(r)\bigr].
\end{equation}
\end{lemma}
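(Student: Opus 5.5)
The plan is to start from Lemma~\ref{lem:polar-integral}, which already gives
\[
 I_p(r)\le 8\pi r+2\int_{\Sph^2}\int_0^r j'(t,\theta)^2\dd t\dd\theta-2\mathcal A'(r).
\]
I will then bound the two remaining terms separately in terms of $\eta(r)$.

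For the middle term, I use Lemma~\ref{lem:polar-concavity}. Since $0\le j'\le1$, we have $(j')^2\le j'$. Because $j(0,\theta)=0$, integrating in $t$ gives
\[
 \int_{\Sph^2}\int_0^r j'(t,\theta)^2\dd t\dd\theta
 \le\int_{\Sph^2}j(r,\theta)\dd\theta .
\]
Next I apply Cauchy--Schwarz on $\Sph^2$, whose area is $4\pi$:
\[
 \int_{\Sph^2}j(r,\theta)\dd\theta
 \le\sqrt{4\pi}\Bigl(\int_{\Sph^2}j(r,\theta)^2\dd\theta\Bigr)^{1/2}
 =\sqrt{4\pi\mathcal A(r)}=4\pi\eta(r).
\]
Hence the middle term contributes at most $8\pi\eta(r)$.

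For the last term, I use Lemma~\ref{lem:area-radius}, specifically the bound $\mathcal A'(r)\ge8\pi a\eta(r)$, which gives $-2\mathcal A'(r)\le-16\pi a\eta(r)$.

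Adding the three contributions yields
\[
 I_p(r)\le 8\pi r+8\pi\eta(r)-16\pi a\eta(r)=8\pi\bigl[r+(1-2a)\eta(r)\bigr],
\]
which is \eqref{eq:linear-area}.

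I do not expect a real obstacle. The only point needing care is to notice that $(j')^2\le j'$, followed by Cauchy--Schwarz, is exactly what converts the kinetic term into the area radius $\eta$. The substantive input, the Minkowski inequality applied to outward minimizing balls, has already been packaged into Lemma~\ref{lem:area-radius}.
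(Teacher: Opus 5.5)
Your proposal is correct and follows the paper's proof step for step. You use $(j')^2\le j'$ with $j(0,\theta)=0$, then Cauchy--Schwarz to get $4\pi\eta(r)$, and finally $\mathcal A'(r)\ge 8\pi a\eta(r)$ from Lemma~\ref{lem:area-radius}, all substituted into \eqref{eq:polar-integral}.
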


\begin{proof}
Since $0\le j'\le1$ and $j(0,\theta)=0$, we have
\[
 \int_0^r j'(t,\theta)^2\dd t
       \le\int_0^r j'(t,\theta)\dd t=j(r,\theta).
\]
Also, Cauchy--Schwarz gives
\[
 \int_{\Sph^2}j(r,\theta)\dd\theta
       \le\sqrt{4\pi\mathcal A(r)}=4\pi\eta(r).
\]
Substituting these inequalities into \eqref{eq:polar-integral}
and then using \eqref{eq:area-bounds}, we obtain
\[
 \int_{B_p(r)}S\dd\mu_g
 \le8\pi r+8\pi\eta(r)-2\mathcal A'(r)
 \le8\pi\bigl[r+(1-2a)\eta(r)\bigr].
\]
\end{proof}

\begin{proof}[Proof of Theorem~\ref{thm:piecewise}]
The case $v=0$ follows from \eqref{eq:pole-uniform}. For $v>0$,
put $a=\sqrt v$. If $0<a\le1/2$, use $\eta(r)\le r$ in
\eqref{eq:linear-area} to obtain
\[
 \int_{B_p(r)}S\dd\mu_g
       \le8\pi\bigl[r+(1-2a)r\bigr]=16\pi(1-a)r.
\]
If $1/2<a\le1$, the coefficient $1-2a$ is negative.
Using $\eta(r)\ge ar$ instead gives
\[
 \int_{B_p(r)}S\dd\mu_g
       \le8\pi\bigl[r+(1-2a)ar\bigr]
       =8\pi(1-a)(1+2a)r.
\]
For every fixed $V_{M^3}\in[0,1/4]$, the metrics
\eqref{eq:rot-example} have that exact volume ratio and their
integrals approach the first bound, by
Theorem~\ref{thm:rotational}. At $V_{M^3}=1$, equality holds
in Euclidean space.
\end{proof}

\section{Nonnegative sectional curvature and arbitrary centers}
\label{sec:sectional}

Throughout this section, $(M^3,g)$ has $K\ge0$ and
$v=V_{M^3}>0$. No pole assumption is made. We prove
Theorem~\ref{thm:main} by comparing $I_p(r)$ with its
asymptotic growth.

\subsection{Spherical moment estimates}
\label{sec:moments}

Let $\dd\theta$ be the area element of the unit sphere $\Sph^2$,
of total area $4\pi$, and define
\begin{equation}\label{eq:ell}
 \ell(q)=
 \begin{cases}
  2(1-q),&0<q\le\tfrac12,\\
  (2q)^{-1},&\tfrac12<q\le1.
 \end{cases}
\end{equation}

\begin{lemma}\label{lem:moments}
Suppose $f:\Sph^2\to[0,1]$ is measurable and
\[
 \int_{\Sph^2}f\dd\theta=4\pi q,\qquad 0<q\le1.
\]
Then, for every unit vector $w\in\R^3$,
\begin{equation}\label{eq:moments}
 \begin{aligned}
 \frac1{4\pi q}\int_{\Sph^2}\ip{u}{w}^2f(u)\dd\theta(u)
     &\ge\frac{q^2}{3},\\
 \frac2{4\pi q}\int_{\Sph^2}\ip{u}{w}_+f(u)\dd\theta(u)
     &\le\ell(q).
 \end{aligned}
\end{equation}
\end{lemma}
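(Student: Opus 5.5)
The plan is to reduce both inequalities in \eqref{eq:moments} to one-dimensional bathtub problems by means of Archimedes' theorem. Fix the unit vector $w$ and push the measure $f\dd\theta$ forward under the height function $u\mapsto z=\ip{u}{w}\in[-1,1]$. By Archimedes, the pushforward of $\dd\theta$ is $2\pi\dd z$ on $[-1,1]$. Disintegrating along the circles $\{\ip{u}{w}=z\}$ then gives a measurable function $F:[-1,1]\to[0,1]$, namely the average of $f$ over the circle at height $z$, with
\[
 \int_{\Sph^2}\phi(\ip{u}{w})f(u)\dd\theta(u)=2\pi\int_{-1}^1\phi(z)F(z)\dd z
\]
for every bounded Borel $\phi$. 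In particular $\int_{-1}^1F\dd z=2q$. Both claims then become extremal problems for $F$ with values in $[0,1]$ and prescribed integral $2q$.

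For the lower bound on the second moment, I take $\phi(z)=z^2$, which is even and increasing in $|z|$. The bathtub principle says $\int z^2F$ is minimized by placing the mass where $z^2$ is smallest, that is, by $F_*=\chi_{[-q,q]}$. To prove this directly, I use the pointwise inequality $(z^2-q^2)(F-F_*)\ge0$, which holds because $F\ge F_*$ where $z^2>q^2$ and $F\le F_*$ where $z^2<q^2$. Integrating and using $\int F=\int F_*$ gives $\int z^2F\ge\int_{-q}^qz^2\dd z=2q^3/3$. Hence
\[
 \frac1{4\pi q}\int_{\Sph^2}\ip{u}{w}^2f\dd\theta\ge\frac{2\pi\cdot2q^3/3}{4\pi q}=\frac{q^2}3.
\]

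For the upper bound on the first moment, I take $\phi(z)=z_+$ and maximize $\int z_+F$. I split into two cases.
\begin{itemize}
\item If $q\le1/2$, the maximizer is the cap $F^*=\chi_{[c,1]}$ with $c=1-2q\ge0$. The same pointwise argument applies, now with $(z_+-c)(F^*-F)\ge0$. It gives
\[
 \int z_+F\le\int_c^1z\dd z=\frac{1-c^2}2=2q(1-q),
\]
so $\frac{2}{4\pi q}\cdot2\pi\cdot2q(1-q)=2(1-q)$.
\item If $q>1/2$, I use the trivial bound $F\le1$ on $[0,1]$, which gives $\int z_+F\le1/2$. This yields $\frac{2}{4\pi q}\cdot2\pi\cdot\frac12=\frac1{2q}$.
\end{itemize}
Together these two cases give exactly $\ell(q)$ as defined in \eqref{eq:ell}.

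The only step needing care is the disintegration that produces $F$, i.e.\ justifying that $F$ is measurable with values in $[0,1]$. This follows from Fubini in cylindrical coordinates $(z,\varphi)$, in which $\dd\theta=\dd z\dd\varphi$. Once that is in place, the remaining steps are the elementary pointwise rearrangement inequalities above.
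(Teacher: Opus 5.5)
Your proof is correct and is essentially the paper's argument: your bathtub inequalities $(z^2-q^2)(F-\mathbf 1_{[-q,q]})\ge0$ and $(z_+-c)(\mathbf 1_{[c,1]}-F)\ge0$ are, after rearranging, exactly the paper's pointwise bounds $z^2f\ge q^2f-(q^2-z^2)_+$ and $z_+f\le(1-2q)f+(z-(1-2q))_+$, and you use the same trivial bound when $q>1/2$. The only difference is that the paper applies these inequalities directly to $f$ on $\Sph^2$ and uses Archimedes' formula only for functions of $z$ alone, so your disintegration step producing $F$ is harmless but unnecessary.
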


\begin{proof}
Write $z=\ip{u}{w}$. Since $0\le f\le1$,
\[
 z^2f\ge q^2f-(q^2-z^2)_+.
\]
Integrating and using
$\int_{\Sph^2}\psi(z)\dd\theta=2\pi\int_{-1}^1\psi(z)\dd z$ for any function $\psi\in L^1[-1,1]$
gives
\[
 \int_{\Sph^2}z^2f\dd\theta
 \ge\int_{\Sph^2}(q^2f-(q^2-z^2)_+)\dd\theta
 =4\pi q^3-2\pi\int_{-q}^{q}(q^2-z^2)\dd z
 =\frac{4\pi q^3}{3}.
\]
For the second estimate, if $q\le\frac12$, since $0\le f\le1$, $z_+f\le(1-2q) f+(z-(1-2q))_+$. Hence
\[
 \begin{aligned}
 2\int_{\Sph^2}z_+f\dd\theta
 &\le8\pi q(1-2q)+4\pi\int_{1-2q}^1(z-(1-2q))\dd z\\
 &=8\pi q(1-2q)+8\pi q^2=8\pi q(1-q).
 \end{aligned}
\]
If $\frac12<q\le1$,
\[
\begin{aligned}
	2\int_{\Sph^2}z_+f\dd\theta\le2\int_{\Sph^2}z_+\dd\theta=4\pi\int_0^1z\dd z=2\pi.
\end{aligned}
\]
Dividing by $4\pi q$ proves \eqref{eq:moments}.
The first estimate is attained by $f=\mathbf1_{\{|z|\le q\}}$,
and the second by $f=\mathbf1_{\{z\ge1-2q\}}$, since
\begin{equation}\label{eq:moment-extremizer}
 \frac2{4\pi q}\int_{\Sph^2}z_+
       \mathbf1_{\{z\ge1-2q\}}\dd\theta
 =\frac1q\int_{\max\{0,1-2q\}}^1z\dd z=\ell(q).
\end{equation}
\end{proof}

\subsection{A convex function with a local Hessian bound}
\label{sec:convex-function}

We first describe the asymptotic density and the Busemann functions
used in the construction. For $z\in M$, let $S_zM$ be the unit
tangent sphere, $c_z(u)$ the cut time, and $J_z(t,u)$ the polar
volume Jacobian, extended by zero for $t\ge c_z(u)$.
Polar volume comparison gives
\[
 0\le J_z(t,u)/t^2\le1,
 \qquad t\longmapsto J_z(t,u)/t^2
       \text{ is nonincreasing}.
\]
Define
\begin{equation}\label{eq:asymptotic-density}
 \mathcal R_z=\{u\in S_zM:c_z(u)=\infty\},
 \qquad j_z(u)=\lim_{t\to\infty}J_z(t,u)/t^2.
\end{equation}
Then $0\le j_z\le1$ and $j_z=0$ outside $\mathcal R_z$.
Changing variables $t=Rs$ and applying dominated convergence gives
\[
 \omega_3v
 =\lim_{R\to\infty}\int_{S_zM}\int_0^1
       \frac{J_z(Rs,u)}{R^2}\dd s\dd\theta_z(u)
 =\frac13\int_{S_zM}j_z\dd\theta_z.
\]
In particular,
\begin{equation}\label{eq:density-mass}
 \int_{S_zM}j_z\dd\theta_z=4\pi v.
\end{equation}

For $u\in\mathcal R_p$, put $\gamma_u(t)=\exp_p(tu)$ and define
\begin{equation}\label{eq:busemann-definition}
 b_u(x)=\lim_{t\to\infty}\bigl(t-d(x,\gamma_u(t))\bigr).
\end{equation}
These functions increase
pointwise with $t$, vanish at $p$, and lie between $-\rho$ and $\rho$, they are also 1-Lipschitz. Thus the limit
$b_u$ exists, is $1$-Lipschitz, and vanishes at $p$.
On $B_p(L)$, for $t>L$, Hessian comparison gives
\[
 \Hess\bigl(t-d(\cdot,\gamma_u(t))\bigr)
       \ge-\frac1{t-L}g
\]
distributionally. This includes the cut locus, since the singular
part of the Hessian of distance is nonpositive; see
\cite[Theorem~2.10 and Corollary~2.12]{MMU14}. Passing to the
limit gives $\Hess b_u\ge0$, and hence $b_u$ is geodesically
convex.

For fixed $x\in M$ such that $b_u$ is differentialble at $x$. Any sequence
$t_j\to\infty$, choose a unit-speed minimizing
geodesic $\sigma_j:[0,\ell_j]\to M$ from $x$ to
$\gamma_u(t_j)$, where
\[
 \ell_j=d(x,\gamma_u(t_j))\ge t_j-\rho(x)\longrightarrow\infty.
\]
By compactness of $S_xM$, after passing to a subsequence,
$\dot\sigma_j(0)\to\xi\in S_xM$. Smooth dependence of geodesics
on their initial data gives
\[
 \sigma_j(s)=\exp_x(s\dot\sigma_j(0))
 \longrightarrow\sigma(s):=\exp_x(s\xi)
\]
uniformly on every bounded interval of $s\ge0$, so $\sigma$ is a ray from $x$.
Moreover, for $0\le s\le\ell_j$,
\[
 t_j-d(\sigma_j(s),\gamma_u(t_j))
 =t_j-(\ell_j-s)=t_j-d(x,\gamma_u(t_j))+s.
\]
Local uniform convergence of $b_u$ and convergence
of $\sigma_j$ therefore imply
$b_u(\sigma(s))=b_u(x)+s$ for every $s\ge0$.
Differentiating at $s=0$
and using its Lipschitz bound yields
\[
 1=\ip{\nabla b_u(x)}{\xi}\le|\nabla b_u(x)|\le1.
\]
Consequently $|\nabla b_u(x)|=1$ and $\xi=\nabla b_u(x)$. Compactness of $S_xM$ now gives
\begin{equation}\label{eq:busemann-directions}
 u_x(\gamma_u(t))\longrightarrow\nabla b_u(x),
 \qquad |\nabla b_u(x)|=1,
\end{equation}
where $u_x(y)$ denotes any minimizing initial direction from
$x$ to $y$. This convergence holds for arbitrary
choices of $u_x(y)$ when $y=\gamma_u(t)$ is a cut point of $x$.

\begin{lemma}[Direction transfer]\label{lem:direction-transport}
Fix $p\in M$. For almost every $x\in M$, the gradient
$\nabla b_u(x)$ exists and has unit length for
$\dd\theta_p$-almost every $u\in\mathcal R_p$, and
\begin{equation}\label{eq:direction-transport}
 \int_{\mathcal R_p}j_p(u)\Psi(\nabla b_u(x))\dd\theta_p(u)
 =\int_{S_xM}j_x(\xi)\Psi(\xi)\dd\theta_x(\xi)
\end{equation}
for every continuous function $\Psi:S_xM\to\R$.
The exceptional set of $x$ is independent of $\Psi$.
\end{lemma}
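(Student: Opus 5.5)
The idea is to compute one normalized volume integral in two ways: in polar coordinates at $x$, and in polar coordinates at $p$. Both computations produce the asymptotic density, and the convergence of directions \eqref{eq:busemann-directions} is what connects them.

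\emph{Step 1: the exceptional set.} Each $b_u$, $u\in\mathcal R_p$, is $1$-Lipschitz, so by Rademacher it is differentiable off a null set $N_u\subset M$.
\begin{itemize}
\item The set $\{(x,u):b_u\text{ is differentiable at }x\}$ is measurable in $M\times\mathcal R_p$. This holds because $(x,u)\mapsto b_u(x)$ is a monotone limit of continuous functions, and differentiability can be expressed through countably many difference quotients.
\item Fubini then yields a null set $Z\subset M$ such that, for every $x\notin Z$, $b_u$ is differentiable at $x$ for $\dd\theta_p$-a.e.\ $u\in\mathcal R_p$.
\item At such points, \eqref{eq:busemann-directions} already gives $|\nabla b_u(x)|=1$.
\end{itemize}
The set $Z$ is defined without reference to $\Psi$, which gives the last sentence of the lemma.

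\emph{Step 2: the double computation.} Fix $x\notin Z$ and a continuous $\Psi$. For $R>0$ consider
\[
 Q(R)=\frac1{R^3}\int_{B_x(R)}\Psi\bigl(u_x(y)\bigr)\dd\mu_g(y).
\]
The direction $u_x(y)$ is unique outside the cut locus of $x$, which is a null set, so $Q(R)$ is well defined.

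\emph{In polar coordinates at $x$,} we have $Q(R)=\int_{S_xM}\Psi(\xi)\int_0^1 J_x(Rs,\xi)R^{-2}\dd s\dd\theta_x$. Since $0\le J_x(t,\xi)/t^2\le1$ and this ratio decreases to $j_x(\xi)$, dominated convergence gives
\[
 Q(R)\to\frac13\int_{S_xM}\Psi j_x\dd\theta_x .
\]

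\emph{In polar coordinates at $p$,} write $y=\gamma_u(t)$ with $t=Rs$. Then
\[
 Q(R)=\int_{S_pM}\int_0^\infty \mathbf 1\{d(x,\gamma_u(Rs))<R\}\,\Psi\bigl(u_x(\gamma_u(Rs))\bigr)\frac{J_p(Rs,u)}{R^2}\dd s\dd\theta_p(u).
\]
The integrand is bounded by $\sup|\Psi|\,s^2$ and vanishes for $s>1+\rho(x)/R\le2$, so dominated convergence applies once we know the pointwise limits:
\begin{itemize}
\item for $u\notin\mathcal R_p$, $J_p(Rs,u)=0$ once $Rs>c_p(u)$, so the integrand tends to $0$;
\item for $u\in\mathcal R_p$, $J_p(Rs,u)/R^2\to s^2j_p(u)$ and $d(x,\gamma_u(Rs))/R\to s$ by the triangle inequality, so the indicator tends to $\mathbf 1\{s<1\}$ for $s\ne1$;
\item for $u\in\mathcal R_p$ with $b_u$ differentiable at $x$, \eqref{eq:busemann-directions} gives $u_x(\gamma_u(Rs))\to\nabla b_u(x)$, for any choice of minimizing direction. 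Continuity of $\Psi$ then gives convergence of $\Psi(u_x(\gamma_u(Rs)))$.
\end{itemize}
Hence
\[
 Q(R)\to\frac13\int_{\mathcal R_p}j_p(u)\Psi(\nabla b_u(x))\dd\theta_p(u).
\]
Equating the two limits proves \eqref{eq:direction-transport}.

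\emph{Main obstacle.} I expect the delicate part to be the measure-theoretic bookkeeping. This includes:
\begin{itemize}
\item the joint measurability needed for Fubini in Step 1;
\item the measurability of $(s,u)\mapsto\Psi(u_x(\gamma_u(Rs)))$, where one must check that the null cut locus of $x$, pulled back by polar coordinates at $p$, is still negligible (it is, since $\exp_p$ is a diffeomorphism off the null cut locus of $p$);
\item the fact that \eqref{eq:busemann-directions} was derived along sequences. Applying it to an arbitrary sequence $R_k\to\infty$ and using uniqueness of the limit $\nabla b_u(x)$ gives convergence along the full family.
\end{itemize}
The geometric content itself is already contained in the paper's derivation of \eqref{eq:busemann-directions}.
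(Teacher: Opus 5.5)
Your proposal is correct and follows essentially the paper's route. You compute one normalized volume integral of $\Psi(u_x(y))$ in polar coordinates at $x$ and again at $p$, and link the two limits by dominated convergence together with \eqref{eq:busemann-directions}. The only difference is the test region: you integrate over $B_x(R)$, which makes the $x$-side exact and requires the indicator limit on the $p$-side, whereas the paper uses the annulus $B_p(2T)\setminus\overline{B_p(T)}$, which is exact on the $p$-side and sandwiched between radii $[T+L,2T-L]$ and $[T-L,2T+L]$ on the $x$-side.
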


\begin{proof}
The Lipschitz bound and Fubini's theorem imply that, for almost
every $x\in M$, the function $b_u$ is differentiable at $x$
for almost every $u\in\mathcal R_p$.
Fix such an $x$; the unit-length assertion follows from
\eqref{eq:busemann-directions}. Let $L=d(p,x)+1$, for any $T>2L$ let
\[
 E_T=B_p(2T)\setminus\overline{B_p(T)},\qquad
 W_{T,L}=\frac{(2T+L)^3-(T-L)^3}{3}.
\]
In polar coordinates at $x$, define
\[
 f_{T,x}(\xi)=\frac1{W_{T,L}}
      \int_0^\infty\mathbf1_{E_T}(\exp_x(s\xi))J_x(s,\xi)\dd s.
\]
The radial integral is bounded above and below by the integrals
of $J_x(s,\xi)$ over $[T-L,2T+L]$ and $[T+L,2T-L]$,
respectively. From \eqref{eq:asymptotic-density}, we get $0\le f_{T,x}\le1$ and
$f_{T,x}(\xi)\to j_x(\xi)$ for every $\xi$.
Polar integration and dominated convergence therefore yield
\[
 \lim_{T\to\infty}\frac1{W_{T,L}}
      \int_{E_T}\Psi(u_x(y))\dd\mu_g(y)
 =\int_{S_xM}j_x\Psi\dd\theta_x.
\]
Compute the same limit in polar coordinates at $p$, with
$s=Tt$, $1\le t\le2$. For $u\in\mathcal R_p$ use
\eqref{eq:busemann-directions}; on other directions the
extended Jacobian is eventually zero. Since
$J_p(Tt,u)/T^2\le t^2$ and $W_{T,L}/T^3\to7/3$, dominated
convergence gives the left-hand side of
\eqref{eq:direction-transport}. The choice of $x$ depended only
on the differentiability of the family $b_u$, so its exceptional
set is independent of $\Psi$.
\end{proof}

Applying Lemma~\ref{lem:moments} to $j_x$ and using
Lemma~\ref{lem:direction-transport}, we obtain, for almost every $x$
and every $w\in S_xM$,
\begin{equation}\label{eq:busemann-moments}
 \begin{aligned}
 \frac1{4\pi v}\int_{\mathcal R_p}
       j_p\ip{\nabla b_u(x)}{w}^2\dd\theta_p&\ge\frac{v^2}3,\\
 \frac2{4\pi v}\int_{\mathcal R_p}
       j_p\ip{\nabla b_u(x)}{w}_+\dd\theta_p&\le\ell(v).
 \end{aligned}
\end{equation}

\begin{lemma}\label{lem:convex-function}
For every $p\in M$ and $r>0$, there is a globally Lipschitz,
geodesically convex function $F:M\to\R$ such that
\begin{equation}\label{eq:F-bounds}
 \Lip(F)\le\ell(v) \quad\text{on } M,\qquad
 \Hess F\ge\frac{v^2}{3r}g\quad\text{on }B_p(r),
\end{equation}
in the distributional sense.
\end{lemma}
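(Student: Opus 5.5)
We take $F$ to be a weighted spherical average of convex profiles applied to the Busemann functions $b_u$, $u\in\mathcal R_p$, with weight $j_p$:
\[
 F(x)=\frac{2}{4\pi v}\int_{\mathcal R_p}j_p(u)\,\phi\bigl(b_u(x)\bigr)\dd\theta_p(u),
\]
where $\phi:\R\to\R$ is convex and nondecreasing, with $\phi'\le1$ everywhere and $\phi''=1/(2r)$ on $[-r,r]$. Concretely, $\phi(s)=0$ for $s\le-r$, $\phi(s)=(s+r)^2/(4r)$ for $-r\le s\le r$, and $\phi(s)=s$ for $s\ge r$; this is $C^1$ and convex. Since $|b_u|\le\rho<r$ on $B_p(r)$, every $b_u$ takes values in the quadratic region there. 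The integral is well defined: each $\phi\circ b_u$ is Lipschitz, and $(u,x)\mapsto b_u(x)$ is measurable because it is a monotone limit of continuous functions.

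\emph{Convexity.} Each $\phi\circ b_u$ is a convex nondecreasing function of a geodesically convex function, hence geodesically convex. A nonnegative average of convex functions is convex, so $F$ is convex along every geodesic, which also gives distributional convexity.

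\emph{Lipschitz bound.} At almost every $x$, Lemma~\ref{lem:direction-transport} shows that $\nabla b_u(x)$ exists and has unit length for a.e.\ $u$. Dominated convergence then lets us differentiate under the integral:
\[
 \ip{\nabla F(x)}{w}=\frac{2}{4\pi v}\int j_p\,\phi'(b_u(x))\ip{\nabla b_u(x)}{w}\dd\theta_p.
\]
Since $0\le\phi'\le1$, the integrand is at most $j_p\ip{\nabla b_u(x)}{w}_+$. The second estimate in \eqref{eq:busemann-moments} therefore gives $\ip{\nabla F}{w}\le\ell(v)$ for every unit $w$, so $|\nabla F|\le\ell(v)$ almost everywhere. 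A Lipschitz function on a complete manifold whose gradient is bounded a.e.\ has that global Lipschitz constant, because a.e.\ differentiability along almost every nearby geodesic suffices by a Fubini argument. Note that using $\phi'\ge0$ is exactly what allows the positive part, and hence the sharper $\ell(v)$ in place of the crude $1$.

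\emph{Hessian bound.} This is the main step. On $B_p(r)$, the chain rule in the distributional sense gives
\[
 \Hess(\phi\circ b_u)\ge\phi''(b_u)\,db_u\otimes db_u+\phi'(b_u)\Hess b_u\ge\frac1{2r}\,db_u\otimes db_u .
\]
I would justify this rigorously by checking it along unit-speed geodesics $\sigma$ inside $B_p(r)$. There $h=b_u\circ\sigma$ is convex and Lipschitz, and $(\phi\circ h)''\ge\tfrac1{2r}(h')^2$ in the sense of measures, since $\phi(s)-s^2/(4r)$ is convex nondecreasing on the relevant range. Averaging, for a test function $\varphi\ge0$ supported in $B_p(r)$ and a smooth vector field $W$,
\[
 \int F\,\Hess\varphi(W,W)\gtrsim\frac{1}{r}\int\varphi\,\frac{1}{4\pi v}\int j_p\ip{\nabla b_u}{W}^2\dd\theta_p .
\]
The first moment bound in \eqref{eq:busemann-moments} bounds the inner average below by $\frac{v^2}{3}|W|^2$ pointwise almost everywhere. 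With the normalisation factor $2$ and $\phi''=1/(2r)$, this yields $\Hess F\ge\frac{v^2}{3r}g$.

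The delicate points are the Fubini interchange between the $u$-integral and the distributional Hessian, and the precise form of the distributional inequality, including the correction terms for a vector field $W$ that is not parallel. I would handle both by the geodesic-restriction formulation: convexity-type inequalities of the form $\frac{F(\sigma(s+h))+F(\sigma(s-h))-2F(\sigma(s))}{h^2}$, integrated in $s$ against test functions, pass through the $u$-average by Fubini. The final step uses $h'(s)^2=\ip{\nabla b_u}{\dot\sigma}^2$ at almost every point along almost every geodesic of a smooth local family, which follows from a.e.\ differentiability of $b_u$ jointly in $(u,x)$.
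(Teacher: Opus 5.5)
Your proposal is correct and is essentially the paper's proof: your $2\phi$ is exactly the paper's $\phi_r$, so your $F$ is the paper's $F$, and both bounds come from the same chain rule together with the two inequalities in \eqref{eq:busemann-moments}. One aside needs fixing. Writing $\phi=s^2/(4r)+\psi$ with $\psi$ convex nondecreasing does not by itself give $(\phi\circ h)''\ge(h')^2/(2r)$, because $h=b_u\circ\sigma$ can be negative and then the term $hh''/(2r)$ can be negative; argue instead from $\phi\circ h=(h+r)^2/(4r)$ with $h+r>0$, i.e.\ from $\phi'\ge0$ and $h''\ge0$, exactly as in your chain-rule display.
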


\begin{proof}
Define the convex nondecreasing function
\[
 \phi_r(s)=
 \begin{cases}
 0,&s\le-r,\\
 (s+r)^2/(2r),&-r\le s\le r,\\
 2s,&s\ge r,
 \end{cases}
\]
and put
\begin{equation}\label{eq:averaging}
 F(x)=\frac1{4\pi v}\int_{\mathcal R_p}
          j_p(u)\phi_r(b_u(x))\dd\theta_p(u).
\end{equation}
Since $|b_u|\le\rho$ and $0\le\phi_r'\le2$, this is finite,
convex, and has global Lipschitz constant at most $2$.
On $B_p(r)$, the distributional chain rule gives
\begin{equation}\label{eq:chain-rule}
 \Hess\phi_r(b_u)
 =\phi_r'(b_u)\Hess b_u+\frac1r\dd b_u\otimes\dd b_u
 \ge\frac1r\dd b_u\otimes\dd b_u.
\end{equation}
The first inequality in
\eqref{eq:busemann-moments} then proves the Hessian bound.
For almost every $x$ and every unit $w$, the second inequality gives
\[
 \ip{\nabla F}{w}
 =\frac1{4\pi v}\int_{\mathcal R_p}
       j_p\phi_r'(b_u)\ip{\nabla b_u}{w}\dd\theta_p
 \le\frac2{4\pi v}\int_{\mathcal R_p}
       j_p\ip{\nabla b_u}{w}_+\dd\theta_p
 \le\ell(v).
\]
Since $F$ is Lipschitz, the almost-everywhere gradient bound gives $\Lip(F)\le\ell(v)$. 
\end{proof}

\subsection{The local-to-asymptotic comparison}
\label{sec:divergence}

Consider the symmetric covariant tensor
\begin{equation}\label{eq:tensor}
 \mathsf T=\frac12 Sg-\Ric.
\end{equation}
Given a unit vector $e_1$, complete it to an orthonormal frame.
Then $\mathsf T(e_1,e_1)=K(e_2\wedge e_3)\ge0$, so
$\mathsf T\ge0$. We use the divergence convention
\[
 (\diver\mathsf T)(Z)=
 \sum_i(\nabla_{e_i}\mathsf T)(e_i,Z).
\]
The contracted Bianchi identity
\cite[Lemma~7.7, p.~125, equation~(7.9)]{Lee97} and
$\nabla g=0$ give
\[
 \diver\mathsf T=\tfrac12\dd S-\diver\Ric=0.
\]
Moreover,
\begin{equation}\label{eq:tensor-properties}
 \tr_g\mathsf T=\frac12S,
 \qquad
 |\mathsf T(X,Y)|\le\frac12S|X|\,|Y|.
\end{equation}
The second inequality follows by diagonalizing the nonnegative
tensor $\mathsf T$ and bounding each eigenvalue by its trace.

\begin{proposition}\label{prop:transfer}
Under the hypotheses of Theorem~\ref{thm:main}, put
\[
 \Lambda=\lim_{t\to\infty}\frac{I_p(t)}t=8\pi(1-v).
\]
If a globally Lipschitz, geodesically convex function $F$ satisfies
\[
 \Lip(F)\le L_0 \quad\text{on } M,\qquad \Hess F\ge\kappa g
       \quad\text{on }B_p(r),\qquad L_0>0,\quad\kappa>0,
\]
distributionally, then, for every $R>r$,
\begin{equation}\label{eq:transfer-general}
 \kappa I_p(r)\le L_0\frac{I_p(R)}{R-r}.
\end{equation}
In particular, $I_p(r)\le(L_0/\kappa)\Lambda$, and
\begin{equation}\label{eq:transfer}
 I_p(r)\le\frac{3\ell(v)}{v^2}\Lambda r.
\end{equation}
\end{proposition}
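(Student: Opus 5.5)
The plan is to integrate the divergence-free, nonnegative tensor $\mathsf T=\frac12Sg-\Ric$ against $\Hess F$, with a radial cutoff, and then integrate by parts. Choose the Lipschitz cutoff $\psi(x)=\min\{1,(R-\rho(x))_+/(R-r)\}$, which equals $1$ on $B_p(r)$, vanishes outside $B_p(R)$, and satisfies $|\nabla\psi|\le(R-r)^{-1}\mathbf 1_{B_p(R)\setminus B_p(r)}$. Since $\mathsf T\ge0$ and $\Hess F\ge0$ everywhere (convexity), with $\Hess F\ge\kappa g$ on $B_p(r)$, we have
\[
 \kappa\int_{B_p(r)}\tr_g\mathsf T\dd\mu_g\le\int_M\psi\,\ip{\mathsf T}{\Hess F}\dd\mu_g .
\]
By \eqref{eq:tensor-properties} the left side equals $\frac{\kappa}{2}I_p(r)$. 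Next, integrate by parts using $\diver\mathsf T=0$:
\[
 \int_M\psi\ip{\mathsf T}{\Hess F}=-\int_M\mathsf T(\nabla\psi,\nabla F).
\]
This is bounded by $\frac12\int|\nabla\psi|\,|\nabla F|\,S\le\frac{L_0}{2(R-r)}\int_{B_p(R)\setminus B_p(r)}S\le\frac{L_0}{2(R-r)}I_p(R)$, using the second half of \eqref{eq:tensor-properties} and $S\ge0$. The factors $\frac12$ cancel, which gives \eqref{eq:transfer-general}.

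The main obstacle is justifying this integration by parts, because $\Hess F$ is only a nonnegative matrix-valued Radon measure, so the pairing $\ip{\mathsf T}{\Hess F}$ must be read in the measure sense. I would define $\int\psi\ip{\mathsf T}{\Hess F}$ as the action of the distribution $\Hess F$ on the smooth-coefficient, compactly supported (Lipschitz in $\psi$) test tensor $\psi\mathsf T$. Concretely:
\begin{itemize}
\item Mollify $\psi$ to a smooth $\psi_\epsilon$. Alternatively, note that the distributional identity $\int\ip{\Phi}{\Hess F}=-\int(\diver\Phi)(\nabla F)$ holds for compactly supported Lipschitz symmetric tensors $\Phi$ by approximation, since $\nabla F\in L^\infty$.
\item Take $\Phi=\psi\mathsf T$, so that $\diver\Phi=\mathsf T(\nabla\psi,\cdot)$.
\item Positivity of the measure pairing, $\ip{\Phi}{\Hess F}\ge0$ for $\Phi\ge0$, and the lower bound $\kappa\tr\Phi$ on $B_p(r)$ both follow by testing the distributional inequalities against nonnegative tensors. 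For the second, use nonnegative test functions supported in $B_p(r)$ and approximate $\mathbf 1_{B_p(r)}$ from below.
\end{itemize}

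For the consequences, divide \eqref{eq:transfer-general} by $\kappa$ and let $R\to\infty$. Since $I_p(R)/(R-r)\to\Lambda$, we get $I_p(r)\le(L_0/\kappa)\Lambda$; here $\Lambda=8\pi(1-v)$ is the asymptotic input \eqref{eq:asymptotic-input}, valid at any center. Finally, apply Lemma~\ref{lem:convex-function} with $L_0=\ell(v)$ and $\kappa=v^2/(3r)$, which yields \eqref{eq:transfer}.
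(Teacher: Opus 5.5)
Your proposal is correct and follows the paper's proof. You pair the nonnegative divergence-free tensor $\mathsf T$ with the Hessian measure of $F$, integrate by parts against a radial cutoff, and bound $|\mathsf T(\nabla F,\nabla\psi)|\le\frac12 S|\nabla F|\,|\nabla\psi|$. The only difference is the cutoff: the paper uses $\eta_R=(1-\rho/R)_+$, which gives the factors $1-r/R$ and $1/R$, whereas your plateau cutoff gives $1$ and $1/(R-r)$, and both yield the same inequality.
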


\begin{proof}
For smooth $F$ and $\eta\in C_c^\infty(M)$, let
$\alpha=\eta\,\mathsf T(\nabla F,\cdot)$. The covariant product
rule gives
\[
 (\nabla_X\alpha)(Y)
 =X(\eta)\mathsf T(\nabla F,Y)
  +\eta(\nabla_X\mathsf T)(\nabla F,Y)
  +\eta\mathsf T(\nabla_X\nabla F,Y).
\]
Taking the trace and using $\diver\mathsf T=0$, we obtain
\[
 \tr_g\nabla\alpha
 =\mathsf T(\nabla F,\nabla\eta)
       +\eta\ip{\mathsf T}{\Hess F}_g.
\]
For Lipschitz convex $F$, the same calculation holds
distributionally.  $\Hess F$ is a locally finite
nonnegative tensor-valued measure, and contraction with the smooth
tensor $\mathsf T$ defines the nonnegative Radon measure $\mathfrak m=\ip{\mathsf T}{\Hess F}_g$.

Take integral on $M$ for both sides, since $\eta$ has compact support, the integral of the left-hand side vanishes, we get
\begin{equation}\label{eq:etadm}
 \int_M\eta\dd\mathfrak m
       =-\int_M\mathsf T(\nabla F,\nabla\eta)\dd\mu_g.
\end{equation}
The local Hessian bound implies
$\mathfrak m\ge(\kappa/2)S\dd\mu_g$ on $B_p(r)$.

Take $\eta_R=(1-\rho/R)_+$.
Since $|\nabla\eta_R|\le R^{-1}$ and
$\eta_R\ge1-r/R$ on $B_p(r)$, approximate $\eta_R$ by smooth function,\eqref{eq:etadm} and \eqref{eq:tensor-properties} gives
\begin{equation}\label{eq:cutoff}
 \begin{split}
 \frac\kappa2\left(1-\frac rR\right)I_p(r)
 &\le\int_M\eta_R\dd\mathfrak m\\
 &=-\int_M\mathsf T(\nabla F,\nabla\eta_R)\dd\mu_g
 \le\frac{L_0}{2R}I_p(R).
 \end{split}
\end{equation}
This proves \eqref{eq:transfer-general}. Letting $R\to\infty$
gives $I_p(r)\le(L_0/\kappa)\Lambda$. Finally, use
Lemma~\ref{lem:convex-function} with
$L_0=\ell(v)$ and $\kappa=v^2/(3r)$ to obtain
\eqref{eq:transfer}.
\end{proof}

\begin{proof}[Proof of Theorem~\ref{thm:main}]
Substituting \eqref{eq:asymptotic-input} into
\eqref{eq:transfer}, we obtain
\[
 I_p(r)\le\frac{24\pi\ell(v)(1-v)}{v^2}r
 =\begin{cases}
 48\pi(1-v)^2v^{-2}r,&0<v\le1/2,\\
 12\pi(1-v)v^{-3}r,&1/2<v\le1.
 \end{cases}
\]
Since $4v(1-v)\le1$, the first expression is also at most
$12\pi(1-v)v^{-3}r$. This proves \eqref{eq:main}.
\end{proof}

\section{The sharp first-order coefficient with a quadratic remainder}
\label{sec:first-order}

In this section, we assume $K\ge0$ and set
$\delta=1-v\in(0,1/100]$. For $u\in\mathcal R_p$, let $b_u$
be the Busemann function in \eqref{eq:busemann-definition}, and
let $\dd b_u\otimes\dd b_u$ be the tensor formed from its weak
differential. For an integrable family $\psi_u$ of functions or
tensor fields, write
\begin{equation}\label{eq:weighted-average}
 \overline\psi(x)=\frac1{4\pi v}\int_{\mathcal R_p}
                    j_p(u)\psi_u(x)\dd\theta_p(u).
\end{equation}
In particular, the families $b_u$ and $\dd b_u\otimes\dd b_u$
define
\[
 \begin{aligned}
 \overline b(x)
 &=\frac1{4\pi v}\int_{\mathcal R_p}
        j_p(u)b_u(x)\dd\theta_p(u),\\
 \overline{\dd b\otimes\dd b}_x
 &=\frac1{4\pi v}\int_{\mathcal R_p}
        j_p(u)(\dd b_u\otimes\dd b_u)_x\dd\theta_p(u).
 \end{aligned}
\]
The tensor average is understood almost everywhere on $M$.

\subsection{Moment bounds near full volume}

\begin{lemma}\label{lem:fo-moments}
Put
\begin{equation}\label{eq:fo-moment-constants}
 m=\frac{v^2}{3},\qquad D=\frac{1+\delta+\delta^2}{3}.
\end{equation}
Then, almost everywhere on $M$,
\begin{equation}\label{eq:fo-moments}
 |\nabla\overline b|\le\delta,
 \qquad
 m g\le \overline{\dd b\otimes\dd b}\le Dg.
\end{equation}
In particular, $\Lip(\overline b)\le\delta$ and
$|\overline b(x)|\le\delta\rho(x)$.
\end{lemma}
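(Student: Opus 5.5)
The plan is to reduce every bound in \eqref{eq:fo-moments} to an extremal problem on the sphere for the asymptotic density $j_x$ at a generic point $x$, using the direction transfer Lemma~\ref{lem:direction-transport}. Fix $x$ outside the exceptional null set of that lemma. Then $\nabla b_u(x)$ exists with unit length for $\dd\theta_p$-a.e.\ $u\in\mathcal R_p$.

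\emph{Step 1: commuting the gradient with the average.} Since each $b_u$ is $1$-Lipschitz with $b_u(p)=0$ and the weight satisfies $\frac{1}{4\pi v}\int j_p\dd\theta_p=1$ by \eqref{eq:density-mass}, the average $\overline b$ is $1$-Lipschitz. Consider a direction $w\in T_xM$ along which $\overline b$ is differentiable. The difference quotients of $b_u$ along $\exp_x(sw)$ are bounded by $1$, and they converge to $\ip{\nabla b_u(x)}{w}$ for a.e.\ $u$. Dominated convergence therefore gives
\[
 \ip{\nabla\overline b(x)}{w}=\frac1{4\pi v}\int_{\mathcal R_p}j_p(u)\ip{\nabla b_u(x)}{w}\dd\theta_p(u)
\]
for a.e.\ $x$. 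Applying \eqref{eq:direction-transport} with $\Psi(\xi)=\ip{\xi}{w}$ rewrites this as $\frac1{4\pi v}\int_{S_xM}j_x(\xi)\ip{\xi}{w}\dd\theta_x$. Similarly, for the tensor we use $\Psi(\xi)=\ip{\xi}{w}^2$, which gives $\overline{\dd b\otimes\dd b}_x(w,w)=\frac1{4\pi v}\int_{S_xM}j_x\ip{\xi}{w}^2\dd\theta_x$. Here $0\le j_x\le1$ and $\int j_x=4\pi v$ by \eqref{eq:density-mass} at the center $x$.

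\emph{Step 2: the spherical extremal problems.} Write $z=\ip{\xi}{w}$ for a unit vector $w$, and set $h=1-j_x\in[0,1]$, which has $\int h\dd\theta_x=4\pi\delta$.

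For the gradient bound, use $\int_{\Sph^2}z\dd\theta=0$ to get
\[
 \int j_x z=-\int h z\le\sup\Bigl\{\int gz:\ 0\le g\le1,\ \int g=4\pi\delta\Bigr\}.
\]
By the bathtub principle, as in Lemma~\ref{lem:moments}, the supremum is attained by the cap $g=\mathbf 1_{\{z\ge1-2\delta\}}$. Its value is $2\pi\int_{1-2\delta}^1z\dd z=4\pi\delta(1-\delta)$. Dividing by $4\pi v=4\pi(1-\delta)$ gives $\ip{\nabla\overline b}{w}\le\delta$ for all $w$, hence $|\nabla\overline b|\le\delta$.

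For the upper bound $D$, write
\[
 \int j_xz^2=\frac{4\pi}3-\int hz^2.
\]
The infimum of $\int hz^2$ under the same constraints is attained by the band $\{|z|\le\delta\}$, with value $2\pi\int_{-\delta}^{\delta}z^2\dd z=\frac{4\pi\delta^3}3$. This yields
\[
 \overline{\dd b\otimes\dd b}(w,w)\le\frac{1-\delta^3}{3(1-\delta)}=D.
\]
The lower bound $m=v^2/3$ is exactly the first inequality of \eqref{eq:busemann-moments}, already derived from Lemma~\ref{lem:moments}.

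\emph{Step 3: consequences.} The almost-everywhere gradient bound for the Lipschitz function $\overline b$ gives $\Lip(\overline b)\le\delta$, by integrating along minimizing geodesics after a standard Fubini perturbation. Then $\overline b(p)=0$ yields $|\overline b(x)|\le\delta\rho(x)$.

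The main obstacle is Step 1: justifying that the weak gradient of $\overline b$ is the weighted average of the $\nabla b_u$ almost everywhere. The approach is to apply Fubini on $M\times\mathcal R_p$ to the full-measure set where each $b_u$ is differentiable, and to use the uniform Lipschitz bound for dominated convergence. The spherical extremal computations themselves are routine bathtub arguments of the same kind as in Lemma~\ref{lem:moments}.
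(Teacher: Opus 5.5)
Your proposal is correct and follows essentially the same route as the paper: you transfer to the density $j_x$ via Lemma~\ref{lem:direction-transport}, solve the bathtub problems for $1-j_x$ (which the paper gets by citing Lemma~\ref{lem:moments} with $q=\delta$), and then convert the a.e.\ gradient bound into $\Lip(\overline b)\le\delta$ and $|\overline b|\le\delta\rho$. Your dominated-convergence argument on difference quotients is simply a more explicit version of the paper's remark that Fubini's theorem permits differentiating under the average.
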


\begin{proof}
Let $f:\Sph^2\to[0,1]$ have integral $4\pi v$, and write
$z=\ip{u}{w}$ for a unit vector $w$. Since
$\int_{\Sph^2}z\dd\theta=0$, we have
$\int_{\Sph^2}zf\dd\theta=-\int_{\Sph^2}z(1-f)\dd\theta$.
Apply the positive-part estimate in
Lemma~\ref{lem:moments} to $1-f$, whose integral is
$4\pi\delta$, with both $z$ and $-z$. This gives
\begin{equation}\label{eq:zfbound}
 \left|\int_{\Sph^2}zf\dd\theta\right|
 \le\max_{\sigma\in\{-1,1\}}
       \int_{\Sph^2}(\sigma z)_+(1-f)\dd\theta
 \le4\pi\delta(1-\delta).
\end{equation}
Since $\int_{\Sph^2}z^2\dd\theta=4\pi/3$, the second-moment estimate in Lemma~\ref{lem:moments}
applied to $f$ and $1-f$ gives
\begin{equation}\label{eq:z2fbound}
 \frac{v^2}{3}
 \le\frac1{4\pi v}\int_{\Sph^2}z^2f\dd\theta
 \le\frac{1-\delta^3}{3v}
 =\frac{1+\delta+\delta^2}{3}.
\end{equation}

We now transfer these estimates to $M$. The weight $j_p$ in
\eqref{eq:weighted-average} is independent of $x$, and every
$b_u$ is $1$-Lipschitz. Thus Fubini's theorem allows weak
differentiation under the integral. For almost every $x\in M$
and every $w\in S_xM$, Lemma~\ref{lem:direction-transport}
with $\Psi(\xi)=\ip{\xi}{w}$ yields
\[
 \begin{aligned}
 \ip{\nabla\overline b(x)}{w}
 &=\frac1{4\pi v}\int_{\mathcal R_p}
       j_p(u)\ip{\nabla b_u(x)}{w}\dd\theta_p(u)\\
 &=\frac1{4\pi v}\int_{S_xM}
       j_x(\xi)\ip{\xi}{w}\dd\theta_x(\xi).
 \end{aligned}
\]
Since $0\le j_x\le1$ and $\int_{S_xM}j_x\dd\theta_x=4\pi v$,
substituting $f=j_x$ into \eqref{eq:zfbound} gives
$|\ip{\nabla\overline b(x)}{w}|\le\delta$.
Taking the supremum over unit vectors $w$ proves
$|\nabla\overline b|\le\delta$. Since each $b_u$ is $1$-Lipschitz, $j_p\ge0$, and
$\int_{\mathcal R_p}j_p(u)\dd\theta_p(u)=4\pi v$, for all
$x,y\in M$ we have
\[
\begin{aligned}
	|\overline b(x)-\overline b(y)|
	&\le \frac{1}{4\pi v}
	\int_{\mathcal R_p}j_p(u)\,
	|b_u(x)-b_u(y)|\dd\theta_p(u)\\
	&\le d(x,y).
\end{aligned}
\]
Thus $\overline b$ is $1$-Lipschitz and, as an average of convex
functions, geodesically convex. The almost-everywhere gradient
bound therefore gives $\Lip(\overline b)\le\delta$.
Finally, $\overline b(p)=0$
gives $|\overline b(x)|\le\delta\rho(x)$.
Similarly, choosing $\Psi(\xi)=\ip{\xi}{w}^2$ gives
\[
 \begin{aligned}
 \overline{\dd b\otimes\dd b}_x(w,w)
 &=\frac1{4\pi v}\int_{\mathcal R_p}
       j_p(u)\ip{\nabla b_u(x)}{w}^2\dd\theta_p(u)\\
 &=\frac1{4\pi v}\int_{S_xM}
       j_x(\xi)\ip{\xi}{w}^2\dd\theta_x(\xi).
 \end{aligned}
\]
By \eqref{eq:z2fbound}, this expression is
between $m$ and $D$. Since $w\in S_xM$ is arbitrary, this gives
$mg\le\overline{\dd b\otimes\dd b}\le Dg$, proving
\eqref{eq:fo-moments}. 
\end{proof}

\subsection{A convex function with explicit bounds}

\begin{lemma}\label{lem:fo-potential}
There is a globally Lipschitz,
geodesically convex function $F:M\to\R$ such that
\begin{equation}\label{eq:fo-potential}
 \Lip(F)\le\frac{1+10\delta}{3}\quad\text{on }M,\qquad
 \Hess F\ge\frac{1-10\delta}{3}g
                   \quad\text{on }B_p(1),
\end{equation}
in the distributional sense.
\end{lemma}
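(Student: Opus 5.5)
The plan is to build $F$ from the averaged quantities of Lemma~\ref{lem:fo-moments}, as the introduction suggests: add twice a first moment to the square root of a second moment. The first moment has Lipschitz constant $O(\delta)$. The square root of the second moment is designed so that, in the Euclidean model $b_u(x)=\ip{x}{u}$ with uniform weight, it behaves like a function with Hessian about $\tfrac13 g$ on the unit ball and Lipschitz constant about $\tfrac13$. Concretely, I would fix a nonnegative, nondecreasing, convex profile $h$ adapted to the unit scale, for instance $h(s)=(s+c)_+$ with $c\ge1$, so that $h(b_u)=b_u+c>0$ on $B_p(1)$ (using $|b_u|\le\rho$). I would then consider
\[
 Q=\overline{h(b)^2},\qquad F=2\,\overline{b}+\lambda\sqrt{Q},
\]
with the constants $c,\lambda$ (and possibly an additive constant inside the root) tuned against the Euclidean model. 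This tuning is a routine one-variable optimisation, but I will only know the right choice once the bounds below are written out.

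\textbf{Convexity.} $\sqrt{Q}$ is the weighted $L^2(j_p\dd\theta_p)$-norm of the family of nonnegative convex functions $h(b_u)$, and such a norm is convex. Hence $\sqrt Q$ is geodesically convex, and $\overline b$ is convex as an average of convex functions. Global Lipschitz continuity follows from $|\nabla h(b_u)|\le1$ and Cauchy--Schwarz.

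\textbf{Hessian bound on $B_p(1)$.} There $h(b_u)=b_u+c$, so
\[
 \Hess Q=2\,\overline{\dd b\otimes\dd b}+2\,\overline{(b+c)\Hess b}\ge 2m\,g,
\]
using $b_u+c\ge0$, $\Hess b_u\ge0$, and the lower bound in \eqref{eq:fo-moments}. The chain rule then gives
\[
 \Hess\sqrt Q=\frac{\Hess Q}{2\sqrt Q}-\frac{\dd Q\otimes\dd Q}{4Q^{3/2}}.
\]
For the subtracted term I would use
\[
 \dd Q=2\,\overline{(b+c)\dd b}=2c\,\dd\overline b+2\,\overline{b\,\dd b},
\]
where the first piece is $O(c\delta)$ by \eqref{eq:fo-moments}, and the second is controlled by Cauchy--Schwarz against $\overline{b^2}\le\rho^2 D$. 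Similarly I would pin $\sqrt Q$ between $c\pm O(\delta)$-corrected Euclidean values, using $|\overline b|\le\delta\rho$ for the cross term and $\overline{b^2}\le \rho^2 D$ for the quadratic term. Since $m=\tfrac13-O(\delta)$ and $D=\tfrac13+O(\delta)$, the Hessian of the combination on $B_p(1)$ is $\tfrac13 g$ up to an error that is linear in $\delta$; with $\delta\le1/100$ this leaves room to absorb all quadratic terms into the stated $10\delta$.

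\textbf{Lipschitz bound.} Almost everywhere,
\[
 |\nabla F|\le 2\delta+\lambda\,\frac{|\overline{h(b)\,\nabla b}|}{\sqrt Q}.
\]
By Cauchy--Schwarz together with the upper bound $\overline{\dd b\otimes\dd b}\le Dg$ (applied in the direction of the vector), the quotient is at most $\sqrt D$ times a factor accounting for the truncation. This should give a global bound of the form $2\delta+\lambda\sqrt D$. It is valid everywhere on $M$, not only on $B_p(1)$, because it uses only the upper moment bound and not the shape of $h$ near the ball.

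\textbf{Main obstacle.} The delicate step is reconciling the two normalisations. The Lipschitz constant of the root term, roughly $\lambda\sqrt D\approx\lambda/\sqrt3$, must match the Hessian lower bound, roughly $\lambda m/\sqrt Q$ with $\sqrt Q\approx c$, to within $O(\delta)$, and at the Euclidean level this requires $\sqrt Q/\sqrt{D}$ to be essentially $1/\sqrt3\cdot\sqrt3$-balanced on the unit ball. I expect to have to choose the profile $h$ (or an additive constant inside $Q$) more cleverly than a plain shift so that the ratio Lip/Hess is $1+O(\delta)$ rather than a fixed constant larger than $1$. I would first test candidates on $\R^3$, where all averages are explicit, to find $h$ and $\lambda$ giving exact ratio $1$ at $\delta=0$. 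Only afterwards would I insert the $O(\delta)$ perturbations from Lemma~\ref{lem:fo-moments} and check that the total error stays below $10\delta/3$ for $\delta\le 1/100$.
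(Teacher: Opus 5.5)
There is a genuine gap, and it sits exactly at the point you defer. Your candidate cannot achieve $\Lip(F)/\kappa=1+O(\delta)$, and this ratio is what Proposition~\ref{prop:transfer} turns into the coefficient of $8\pi\delta$.

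Your estimates use only the moment bounds of Lemma~\ref{lem:fo-moments}. So they must also hold in the Euclidean model $b_u(x)=\ip{x}{u}$, $j_p\equiv1$, $\overline b=0$, where everything is explicit.
\begin{itemize}
\item \emph{Hessian.} For $c\ge1$ and $|x|\le1$ you get $\sqrt Q=\sqrt{c^2+|x|^2/3}$. Its smallest Hessian eigenvalue on the unit ball is the radial one at $|x|=1$, namely $c^2/\bigl(3(c^2+\tfrac13)^{3/2}\bigr)\le\sqrt3/8\approx0.217$.
\item \emph{Lipschitz constant.} For $|x|\ge c$ the truncation gives $Q=(|x|+c)^3/(6|x|)$. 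So the global Lipschitz constant of $\sqrt Q$, which is what the cutoff argument needs, equals $1/\sqrt6\approx0.408$, approached as $|x|\to\infty$.
\end{itemize}
Hence, for every $\lambda$, the ratio is at least $4\sqrt2/3\approx1.89$. That would give about $15\pi\delta$ rather than $8\pi\delta$, and an additive constant inside the root does not push the ratio below $1.8$. In particular, your claim that the Hessian of the combination is $\frac13g$ up to $O(\delta)$ is false for this $F$.

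The obstruction is structural. In $\R^3$, a convex function with $\Hess F\ge\kappa g$ on the unit ball and global Lipschitz constant close to $\kappa$ is forced to be essentially $\kappa|x|^2/2$ on the ball, continued with slope about $\kappa$. By contrast, a square root of a shifted second moment has its curvature concentrated near $p$ and keeps gaining slope far away.

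The missing idea is to remove the shift and square rather than take a root. Set $q=\sqrt{\overline{b^2}}$ and $G=q+2\overline b$, so that $G=|x|/\sqrt3$ in $\R^3$. Then $G(p)=0$ and $\Lip(G)\le a:=\sqrt D+2\delta$. The paper takes $F=H_a(G)$, with $H_a(s)=s^2/2$ for $0\le s\le a$ and $H_a(s)=as-a^2/2$ for $s\ge a$. This gives $\Lip(F)\le a^2\le\frac{1+10\delta}{3}$. On $B_p(1)$ one has $G\le a\rho<a$, hence $F=G^2/2$ and
\[
 \Hess F=G\Hess G+\dd G\otimes\dd G\ge(1-4\delta)A-(\delta-4\delta^2)g\ge\frac{1-10\delta}{3}g,
\]
where $A=\overline{\dd b\otimes\dd b}\ge mg$.

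Removing the shift costs the convexity you got for free from nonnegativity. Now $q$ is an $L^2$ norm of functions that change sign. The $2\overline b$ term is exactly what makes $\Hess G=(A-\dd q\otimes\dd q)/q+\overline{(2+b/q)\Hess b}$ nonnegative, and this requires $|b_u|\le\rho\le2q$, i.e.\ the lower bound $q^2\ge(m-4\delta)\rho^2\ge\rho^2/4$. The paper derives it from the Toponogov hinge estimate $b_u(x)\ge\rho\ip{u}{w}$, with $w$ the initial direction from $p$ to $x$. It combines this with $|\overline{\ip{u}{w}}|\le\delta$, $\overline{\ip{u}{w}^2}\ge m$ and $|\overline b|\le\delta\rho$. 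The same bound yields $\overline b/q\ge-2\delta$, which enters the Hessian estimate above. This ingredient is absent from your plan.
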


\begin{proof}
Define
\[
 q=\sqrt{\overline{b^2}},\qquad h=\overline b,
 \qquad A=\overline{\dd b\otimes\dd b}.
\]
Since $|b_u(x)-b_u(y)|\le d(x,y)$, the triangle inequality for the weighted $L^2$ integral shows
that $q$ is $1$-Lipschitz.

We give a lower bound for $q$.
For $x\ne p$, choose a minimizing geodesic from $p$ to $x$
with initial unit direction $w$. Toponogov hinge comparison
\cite{Petersen16},
applied to this segment and $\gamma_u(t)$ and then letting
$t\to\infty$, gives
\[
 b_u(x)\ge\rho(x)\ip{u}{w}.
\]
Put $z_u=\ip{u}{w}$ and $e_u=b_u(x)-\rho z_u\ge0$.
As $|z_u|\le1$,
\[
 b_u(x)^2=(\rho z_u+e_u)^2\ge\rho^2z_u^2-2\rho e_u.
\]
Equations \eqref{eq:zfbound} and \eqref{eq:z2fbound} give
$|\overline z|\le\delta$ and $\overline{z^2}\ge m$.
Together with $|h(x)|\le\delta\rho$, this yields
\begin{equation}\label{eq:q-lower}
 \overline e=h(x)-\rho\overline z\le2\delta\rho,
 \qquad
 q^2\ge\rho^2\overline{z^2}-2\rho\overline e\ge(m-4\delta)\rho^2\ge\frac14\rho^2.
\end{equation}
The last inequality follows from $\delta\le1/100$.
In particular, $q>0$ away from $p$ and $|b_u|/q\le2$.

On $M\setminus\{p\}$, differentiation and Cauchy--Schwarz give
\begin{equation}\label{eq:q-gradient}
	\dd q=\frac{\overline{b\dd b}}q,
	\qquad \dd q\otimes\dd q\le A\le Dg.
\end{equation}
It follows that $\Lip(q)\le\sqrt D$.

Set
\begin{equation}\label{eq:corrected-norm}
	G=q+2h,\qquad a=\sqrt D+2\delta.
\end{equation}
All Hessian identities below are understood distributionally on
$M\setminus\{p\}$. On each compact subset, $q$ is bounded away
from zero, the functions $b_u$ are uniformly Lipschitz, and their
nonnegative Hessian measures have uniformly bounded mass.
The distributional product and chain rules therefore apply to
$\overline{b^2}$ and its square root.
Since the weight in the average is independent of $x$, the identity
$q^2=\overline{b^2}$ and the product rule give
\[
\begin{aligned}
	2q\,\Hess q+2\,\dd q\otimes\dd q
	&=\Hess(q^2)
	=\overline{\Hess(b^2)}\\
	&=2\,\overline{\dd b\otimes\dd b}
	+2\,\overline{b\,\Hess b}\\
	&=2A+2\,\overline{b\,\Hess b}.
\end{aligned}
\]
Consequently,
\[
\Hess q
=\frac{A-\dd q\otimes\dd q}{q}
+\frac1q\,\overline{b\,\Hess b}.
\]
Also, $h=\overline b$ gives
$\Hess h=\overline{\Hess b}$. Therefore, since $G=q+2h$,
\begin{equation}\label{eq:G-hessian}
	\begin{aligned}
		\Hess G
		&=\Hess q+2\Hess h\\
		&=\frac{A-\dd q\otimes\dd q}{q}
		+\frac1q\,\overline{b\,\Hess b}
		+2\,\overline{\Hess b}\\
		&=\frac{A-\dd q\otimes\dd q}{q}
		+\overline{\left(2+\frac bq\right)\Hess b}.
	\end{aligned}
\end{equation}
By \eqref{eq:q-gradient}, $\Hess b_u\ge0$, and $|b_u|/q\le2$,
both terms are nonnegative. Thus $\Hess G\ge0$ away from $p$.
Moreover,
\[
 G\ge(1/2-2\delta)\rho\ge0,\qquad G(p)=0,
 \qquad\Lip(G)\le a.
\]
Hence $G$ is globally geodesically convex.

For $s\ge0$, let
\[
 H_a(s)=
 \begin{cases}
 s^2/2,&0\le s\le a,\\
 as-a^2/2,&s\ge a,
 \end{cases}
 \qquad F=H_a(G).
\]
Then $F$ is globally convex and $\Lip(F)\le a^2$.
On $B_p(1)$, we have $G\le a\rho<a$, so $F=G^2/2$.
Using \eqref{eq:G-hessian}, $h/q\ge-2\delta$, and
$A-\dd q\otimes\dd q\ge0$, we obtain on $B_p(1)\setminus\{p\}$
\begin{align}
 \Hess F&=G\,\Hess G+\dd G\otimes\dd G\notag\\
 &=\frac{q+2h}{q}(A-\dd q\otimes\dd q)
       +(\dd q+2\dd h)\otimes(\dd q+2\dd h)\notag\\
 &\quad+G\,\overline{\left(2+\frac bq\right)\Hess b}\notag\\
 &\ge A+\frac{2h}{q}(A-\dd q\otimes\dd q)
       +2(\dd q\otimes\dd h+\dd h\otimes\dd q)
       +4\dd h\otimes\dd h\notag\\
 &\ge(1-4\delta)A+4\delta\,\dd q\otimes\dd q
       +2(\dd q\otimes\dd h+\dd h\otimes\dd q)
       +4\dd h\otimes\dd h.
 \label{eq:quadratic-hessian}
\end{align}
Since $|\dd h|\le\delta$ and $0<\delta<1/4$, we obtain
\[
 \begin{aligned}
 &4\delta\,\dd q\otimes\dd q
       +2(\dd q\otimes\dd h+\dd h\otimes\dd q)
       +4\dd h\otimes\dd h\\
 &\quad=4\delta
       \left(\dd q+\frac{\dd h}{2\delta}\right)
       \otimes\left(\dd q+\frac{\dd h}{2\delta}\right)
       +\left(4-\frac1\delta\right)\dd h\otimes\dd h\\
 &\quad\ge-(\delta-4\delta^2)g,
 \end{aligned}
\]
 Consequently,
\[
 \Hess F\ge\bigl((1-4\delta)m-\delta+4\delta^2\bigr)g
 =\frac{1-9\delta+21\delta^2-4\delta^3}{3}g
 \ge\frac{1-10\delta}{3}g.
\]
Since $F$ is globally convex, its Hessian measure has no
negative part at $p$.The lower
bound holds on all of $B_p(1)$ in the distributional sense.

Finally, $\delta\le1/100$ gives $\sqrt D\le7/12$. Hence
\[
 \begin{aligned}
 \Lip(F)\le a^2
 &=D+4\sqrt D\,\delta+4\delta^2\\
 &\le\frac{1+8\delta+13\delta^2}{3}
 \le\frac{1+10\delta}{3}.
 \end{aligned}
\]
This proves \eqref{eq:fo-potential}.
\end{proof}

\subsection{The upper bound and sharpness}

\begin{proof}[Proof of Theorem~\ref{thm:first-order}]
For $0<\delta\le1/100$, apply Proposition~\ref{prop:transfer}
with $r=1$ and the function in Lemma~\ref{lem:fo-potential}.
Using \eqref{eq:asymptotic-input}, we get
\begin{equation}\label{eq:quadratic-upper}
 \begin{aligned}
 I_p(1)
 &\le8\pi\delta\frac{1+10\delta}{1-10\delta}\\
 &=8\pi\delta+\frac{160\pi\delta^2}{1-10\delta}
 \le8\pi\delta+200\pi\delta^2.
 \end{aligned}
\end{equation}
Rescaling by $r^{-2}$ gives \eqref{eq:first-order} for every
$r>0$. If $\delta=0$, Bishop--Gromov rigidity gives Euclidean
space and the assertion is immediate.

For the lower bound of $\mathcal C(v)$, the sharp rotational examples from
Theorem~\ref{thm:rotational} have $K\ge0$ and exact volume
ratio $v$. Hence
\begin{equation}\label{eq:first-order-sandwich}
 16\pi(1-\sqrt v)\le\mathcal C(v)
       \le8\pi(1-v)+200\pi(1-v)^2
       \qquad(99/100\le v<1).
\end{equation}
Since
\[
 16\pi(1-\sqrt v)
 =8\pi(1-v)+\frac{8\pi(1-v)^2}{(1+\sqrt v)^2},
\]
division by $1-v$ proves
\eqref{eq:first-order-sharp-limit}.
\end{proof}

\appendix

\section{Convex hypersurfaces with large unit-ball integrals}
\label{sec:centers}
\label{sec:double}

We construct smooth complete convex hypersurfaces in $\R^4$
whose unit-ball integrals approach
\[
 \mathcal E(\theta)=16\pi+8\pi\theta\cos\theta,
 \qquad 0<\theta<\pi/2.
\]
The construction joins two flat copies of a smoothed convex
cap--cylinder domain by an explicit convex band in $\R^4$.
The calculation separates the contributions of the spherical
cap, cylinder, and smoothed junction in the thin-double limit.

Fix $a=\cos\theta$ and $b=\sin\theta$, and consider
\begin{equation}\label{eq:cap-cylinder}
 \Omega_\theta=
 \{(y,z)\in\R^2\times\R:
       |y|\le a,\ z\ge-\sqrt{1-|y|^2}\}.
\end{equation}
Its boundary is a unit spherical cap joined to a cylinder of
radius $a$ at $z=-b$. Smooth this junction by replacing
\begin{equation}\label{eq:cap-profile}
 R_\theta(z)=
 \begin{cases}
 \sqrt{1-z^2},&-1\le z\le-b,\\
 a,&z\ge-b
 \end{cases}
\end{equation}
with a smooth nondecreasing concave profile $R_{\theta,\eta}$
away from $z=-1$, equal to $R_\theta$ outside an
$\eta$-neighborhood of $-b$. Such a profile is obtained from a smooth concave minimum
of $\sqrt{1-z^2}$ and $a$ near the junction, equal to the
ordinary minimum outside a smaller neighborhood. Denote the resulting smooth convex domain by
$\Omega_{\theta,\eta}$.

Choose a smooth increasing function $\beta:[0,1]\to[0,\pi]$
such that $\beta(0)=0$, $\beta(1)=\pi$,
$\beta(1-t)=\pi-\beta(t)$, and all positive-order
derivatives vanish at the endpoints. Put
\[
 u(t)=\int_0^t\cos\beta(s)\dd s,
 \qquad w(t)=\int_t^{1/2}\sin\beta(s)\dd s,
 \qquad h=w(0)>0.
\]
Then $u\ge0$, $u(0)=u(1)=0$, and $(u,w)$ is a unit-speed
convex arc from $(0,h)$ to $(0,-h)$, tangent to horizontal
lines to infinite order at its endpoints.
For $\varepsilon>0$, join the two flat faces
$\Omega_{\theta,\eta}\times\{\pm\varepsilon h\}$ by
\begin{equation}\label{eq:explicit-hypersurface}
 X_\varepsilon(q,t)
   =\bigl(q+\varepsilon u(t)\nu(q),\varepsilon w(t)\bigr),
 \qquad q\in\partial\Omega_{\theta,\eta},\quad 0\le t\le1,
\end{equation}
where $\nu$ is the outward unit normal.
This defines a smooth convex hypersurface
$M_{\theta,\eta,\varepsilon}\subset\R^4$.
Indeed, the horizontal sections of the enclosed set are parallel
convex domains, and their offset is a concave function of the
vertical coordinate. The endpoint flatness of $\beta$ gives
smooth attachment to both faces. The hypersurface is complete
and has a cylindrical end, so $K\ge0$ and $V_{M^3}=0$.
We take
\begin{equation}\label{eq:double-center}
 p_\varepsilon=(0,0,0,\varepsilon h).
\end{equation}
As $\varepsilon\downarrow0$ and then $\eta\downarrow0$, the
two faces approach two copies of $\Omega_\theta$, glued along
their common boundary; this is the thin double in
Figure~\ref{fig:convex-double}.

\begin{figure}[htbp]
	\centering
	\begin{tikzpicture}[font=\footnotesize,>=Latex,
		line cap=round,line join=round]
		
		\begin{scope}[x=1.7cm,y=1.7cm]
			\def\figTheta{49.3}
			\def\figTau{0.92}
			\def\figEta{0.08}
			
			\pgfmathsetmacro{\figA}{cos(\figTheta)}
			\pgfmathsetmacro{\figB}{sin(\figTheta)}
			\pgfmathsetmacro{\figZlo}{-\figB-\figEta}
			\pgfmathsetmacro{\figZhi}{-\figB+\figEta}
			\pgfmathsetmacro{\figRlo}{sqrt(1-\figZlo*\figZlo)}
			\pgfmathsetmacro{\figCr}{(\figRlo+\figA)/2}
			\pgfmathsetmacro{\figCz}{\figZlo+
				(\figA-\figRlo)*\figRlo/(-2*\figZlo)}
			\pgfmathsetmacro{\figVz}{\figZhi-2*\figEta/3}
			\pgfmathsetmacro{\figStart}{180+asin(-\figZlo)}
			\pgfmathsetmacro{\figEnd}{360-asin(-\figZlo)}
			\pgfmathsetmacro{\figTop}{1.1/\figTau}
			
			\def\figSmoothPath{
				(-\figA,\figTop)--(-\figA,\figZhi)
				.. controls (-\figA,\figVz) and (-\figCr,\figCz)
				.. (-\figRlo,\figZlo)
				arc[start angle=\figStart,end angle=\figEnd,radius=1]
				.. controls (\figCr,\figCz) and (\figA,\figVz)
				.. (\figA,\figZhi)--(\figA,\figTop)
			}
			
			\begin{scope}[scale=\figTau]
				\path[fill=geoblue!7] \figSmoothPath --cycle;
			\end{scope}
			
			\draw[gray!50,densely dotted] (0,0) circle[radius=1];
			\draw[gray!60,->] (-1.12,0)--(1.17,0)
			node[right] {$y_1$};
			\draw[gray!60,->] (0,-1.12)--(0,1.27)
			node[above] {$z$};
			
			\draw[gray!75,thick,dashed]
			(-\figA,1.1)--(-\figA,-\figB)
			arc[start angle=180+\figTheta,
			end angle=360-\figTheta,radius=1]
			--(\figA,1.1);
			
			\begin{scope}[scale=\figTau]
				\draw[geoblue,very thick] \figSmoothPath;
			\end{scope}
			
			\fill (0,0) circle[radius=1.2pt]
			node[above left] {$0$};
			\node[geoblue] at (0,.57) {$\tau\Omega_{\theta,\eta}$};
			\fill[gray!75] (\figA,-\figB) circle[radius=1pt];
			\node[anchor=west,gray!80] at (.72,-.8) {$z=-b$};
			
			\draw[geoblue,very thick] (-.98,-1.34)--(-.62,-1.34);
			\node[anchor=west] at (-.52,-1.34)
			{$\partial(\tau\Omega_{\theta,\eta})$};
			\draw[gray!75,thick,dashed] (-.98,-1.61)--(-.62,-1.61);
			\node[anchor=west] at (-.52,-1.61) {$\partial\Omega_\theta$};
		\end{scope}
		
		\begin{scope}[shift={(6.0cm,0cm)},x=1.35cm,y=1.35cm]
			\draw[gray!60,->] (-1.05,0)--(1.45,0)
			node[right] {$\nu$};
			\draw[gray!60,->] (0,-.95)--(0,1.08)
			node[above] {$w$};
			\draw[geoblue,very thick] (-1,.52)--(.16,.52);
			\draw[geoblue,very thick] (-1,-.52)--(.16,-.52);
			\draw[geoamber,very thick] (.16,.52)
			.. controls (.85,.52) and (.95,.24) .. (.95,0)
			.. controls (.95,-.24) and (.85,-.52) .. (.16,-.52);
			\fill (0,0) circle[radius=1.2pt]
			node[below left] {$q$};
			\node[anchor=east] at (-.05,.8) {upper face};
			\node[anchor=east] at (-.05,-.8) {lower face};
			\node[geoamber,align=center] at (1.12,1.05) {smooth\\band};
			\draw[geoamber,->] (1.08,.72)--(.81,.32);
			\node at (.1,-1.86) {normal section of the thin double};
		\end{scope}
		
	\end{tikzpicture}
	\caption{Left: the smoothed profile after a homothety of factor
		$\tau<1$ (solid), given by
		$y_1=\pm\tau R_{\theta,\eta}(z/\tau)$, and the limiting
		cap--cylinder profile as $\eta\downarrow0$ and $\tau\uparrow1$
		(dashed). The dotted circle is the unit circle.
		The local smoothing is schematic.
		Right: the connecting band in $\R^4$.}
	\label{fig:convex-double}
\end{figure}

We compute in the thin-double limit, taking $\varepsilon\downarrow0$
first with $\eta$ fixed. Write $H=\kappa_1+\kappa_2$ on
$\partial\Omega_{\theta,\eta}$. Both flat faces have zero scalar
curvature. On the band, the transverse principal curvature is
$\beta'/\varepsilon$, the tangential principal curvatures tend to
$\kappa_i\sin\beta$, and $\varepsilon^{-1}\dd\mu_g$ tends to
$\dd t\dd\sigma$. Thus $S=2\sum_{i<j}\lambda_i\lambda_j$ gives,
for every relatively compact patch $A$,
\begin{equation}\label{eq:band-integral}
 \lim_{\varepsilon\downarrow0}
 \int_{X_\varepsilon(A\times[0,1])}S\dd\mu_g
 =2\int_0^1\beta'\sin\beta\dd t\int_AH\dd\sigma
 =4\int_AH\dd\sigma.
\end{equation}
The product of the two tangential curvatures has vanishing
integral because they remain bounded for fixed $\eta$ and the
band thickness tends to zero.

In the limiting model, the spherical cap has $H=2$ and area
$2\pi(1-b)$, while the cylinder between $z=-b$ and $z=b$
has $H=1/a$ and area $4\pi ab$. Their contributions to the
scalar curvature integral are therefore
\[
 16\pi(1-b)\qquad\text{and}\qquad16\pi b.
\]
At the junction circle, the outward normal turns through
$\theta$ and the circumference is $2\pi a$. To compute the
contribution of its smoothing $\Gamma_\eta$, write $s$ for
meridian arclength, $r_\eta$ for the distance to the axis, and
$\alpha$ for the outward normal angle measured from the radial
direction. The principal curvatures are $\dd\alpha/\dd s$ and
$\cos\alpha/r_\eta$, and $\dd\sigma=2\pi r_\eta\dd s$. Hence
\[
 \int_{\Gamma_\eta}H\dd\sigma
 =2\pi\int r_\eta\dd\alpha+2\pi\int\cos\alpha\dd s
 \longrightarrow2\pi a\theta.
\]
Indeed, $r_\eta\to a$ uniformly, the total normal rotation tends
to $\theta$, and the meridian length tends to zero. Thus the
junction contributes $8\pi a\theta$ to the thin-double scalar
curvature integral: the shrinking width does not remove the
integrated meridian curvature, which records the normal rotation.

To include the cap and junction in an open unit geodesic ball,
first scale the hypersurface by $0<\tau<1$. A band point over
$q$ has distance from $p_\varepsilon$ between
$|q|-\varepsilon\max u$ and $|q|+\varepsilon$, by projection to
$\R^3$ and by a path along the upper face and then the band.
For small $\eta$, the sphere of radius $1/\tau$ meets only the
cylindrical part, transversely. Consequently,
\begin{equation}\label{eq:double-ball-limit}
 \lim_{\varepsilon\downarrow0}
 I_{\tau p_\varepsilon}^{\tau M_{\theta,\eta,\varepsilon}}(1)
 =4\tau\int_{\partial\Omega_{\theta,\eta}
                  \cap B^{\R^3}_{1/\tau}(0)}H\dd\sigma.
\end{equation}
Here the superscript specifies the induced metric on the scaled
hypersurface. The included cylinder has limiting height
$b+\sqrt{\tau^{-2}-a^2}$. Letting $\eta\downarrow0$ and then
$\tau\uparrow1$ therefore gives
\begin{equation}\label{eq:double-value}
 \begin{aligned}
 &\lim_{\tau\uparrow1}\lim_{\eta\downarrow0}
   \lim_{\varepsilon\downarrow0}
 I_{\tau p_\varepsilon}^{\tau M_{\theta,\eta,\varepsilon}}(1)\\
 &\quad=4\lim_{\tau\uparrow1}\tau
 \left[4\pi(1-b)+2\pi\bigl(b+\sqrt{\tau^{-2}-a^2}\bigr)
                  +2\pi a\theta\right]\\
 &\quad=16\pi(1-b)+16\pi b+8\pi a\theta
 =16\pi+8\pi\theta\cos\theta.
 \end{aligned}
\end{equation}

\begin{proposition}\label{prop:double-lower}
For every $0<\theta<\pi/2$ and $\epsilon>0$, there are a smooth
complete noncompact convex hypersurface $M^3\subset\R^4$ and
$p\in M$ such that
\begin{equation}\label{eq:convex-lower}
 K\ge0,\qquad V_{M^3}=0,\qquad
 I_p(1)>16\pi+8\pi\theta\cos\theta-\epsilon.
\end{equation}
\end{proposition}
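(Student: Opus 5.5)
The plan is to take the scaled hypersurface $M=\tau M_{\theta,\eta,\varepsilon}$ with center $p=\tau p_\varepsilon$, for suitable parameters, and read the bound off the iterated limit \eqref{eq:double-value}.

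\textbf{Qualitative properties.} For all admissible $\tau,\eta,\varepsilon$, the hypersurface $M_{\theta,\eta,\varepsilon}$ is a smooth complete convex hypersurface in $\R^4$, as established above. Convexity of the hypersurface means its second fundamental form is semidefinite. By the Gauss equation, every sectional curvature is then a product $\lambda_i\lambda_j$ of principal curvatures of the same sign, so $K\ge0$. These properties are preserved by the homothety $\tau$.

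For $V_{M^3}=0$: outside a compact set, $M$ is the boundary of a product-like region $\tau(\text{cylinder cross-section}\times[-\varepsilon h,\varepsilon h])$. Metrically this is a product of a compact surface with a half-line. Hence $\Vol(B_p(r))=O(r)$, and the ratio $\Vol(B_p(r))/\omega_3r^3$ tends to zero.

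\textbf{Choice of parameters.} The three limits in \eqref{eq:double-value} are nested. Given $\epsilon>0$, proceed in three stages.
\begin{enumerate}
\item Choose $\tau<1$ so close to $1$ that
\[
4\tau\bigl[4\pi(1-b)+2\pi(b+\sqrt{\tau^{-2}-a^2})+2\pi a\theta\bigr]>16\pi+8\pi\theta\cos\theta-\epsilon/3.
\]
\item For this $\tau$, choose $\eta$ small so that the $\eta\downarrow0$ limit of the right side of \eqref{eq:double-ball-limit} is within $\epsilon/3$. This works because $\int_{\partial\Omega_{\theta,\eta}\cap B_{1/\tau}}H\dd\sigma$ converges: the cap and cylinder pieces agree with the model outside the $\eta$-neighborhood, and the junction contribution tends to $2\pi a\theta$, as computed. $\eta$ must also be small enough that the sphere of radius $1/\tau$ meets only the cylindrical part, transversely.
\item For these $\tau,\eta$, choose $\varepsilon$ small using \eqref{eq:double-ball-limit}.
\end{enumerate}
Combining the three stages gives $I_p(1)>16\pi+8\pi\theta\cos\theta-\epsilon$.

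\textbf{Main obstacle.} The delicate step is justifying \eqref{eq:double-ball-limit} rigorously, specifically that the intrinsic unit ball captures exactly the region over $\partial\Omega_{\theta,\eta}\cap B_{1/\tau}$ up to errors that vanish as $\varepsilon\to0$. I would handle this in three parts.
\begin{enumerate}
\item \emph{Distance comparison.} Use the two-sided estimate $|q|-\varepsilon\max u\le d\le|q|+\varepsilon$ for band points. The lower bound comes from projection to $\R^3$ being $1$-Lipschitz. The upper bound comes from the straight segment on the flat upper face followed by a path along the band.
\item \emph{Transversality.} The scalar curvature of the band is bounded by $C/\varepsilon$ on a region of volume $O(\varepsilon)$ per unit area. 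So the band portion over the $O(\varepsilon)$-shell near $|q|=1/\tau$ contributes $O(\varepsilon)$. Transversality of the sphere to the cylinder keeps the area of that shell $O(\varepsilon)$.
\item \emph{Local convergence.} Apply \eqref{eq:band-integral} on the compact patch $A=\partial\Omega_{\theta,\eta}\cap B_{1/\tau}$. Here $S\dd\mu_g\to 4H\dd\sigma$ locally uniformly, since the terms involving $\beta'/\varepsilon$ times the tangential curvatures dominate and the remaining terms are $O(\varepsilon)$.
\end{enumerate}
The flat faces contribute nothing. This completes the estimate.
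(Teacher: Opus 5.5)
Your proposal is correct and follows the paper's own argument. You use the same scaled thin double $\tau M_{\theta,\eta,\varepsilon}$ centered at $\tau p_\varepsilon$, the same two-sided distance estimate for band points and the same band limit \eqref{eq:band-integral}, and you turn the iterated limit \eqref{eq:double-value} into successive choices of $\tau$, then $\eta$, then $\varepsilon$; your $O(\varepsilon)$ shell estimate near $|q|=1/\tau$ and the product-end argument for $V_{M^3}=0$ just make explicit what the paper leaves implicit (one small correction: only principal planes have curvature $\lambda_i\lambda_j$, and in general $K(X\wedge Y)=\mathrm{II}(X,X)\mathrm{II}(Y,Y)-\mathrm{II}(X,Y)^2\ge0$ for semidefinite $\mathrm{II}$).
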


The largest limiting value in this family is
\begin{equation}\label{eq:Cstar}
 \mathcal E_*=64.3673715295737\ldots,
 \qquad \theta_*\tan\theta_*=1,
 \qquad \theta_*=0.860333589019\ldots.
\end{equation}
Since $\mathcal E(\theta)>16\pi$, the uniform
estimate \eqref{eq:pole-uniform} fails when the pole assumption
on the center is omitted.

\end{document}